%%%%%%%%%%%%%%%%%%%%%%%%%%%%%%%%%%%%%%%%%%%%%%%%%%%%%%%%%%%%%%%%%%%%%%
%% Dynamical Degree, Arithmetic Entropy, and Canonical Heights for 
%%      Dominant Rational Maps of Projective Space
%% "Joseph Silverman" <jhs@math.brown.edu>
%% Submitted to Ergodic Theory and Dynamical Systems <etds@maths.warwick.ac.uk>
%% http://www.esaim-cocv.org/action/displayJournal?jid=ETS
%%%%%%%%%%%%%%%%%%%%%%%%%%%%%%%%%%%%%%%%%%%%%%%%%%%%%%%%%%%%%%%%%%%%%%
\documentclass[12pt,reqno,draft]{amsart} 
\usepackage{amssymb,amscd,url}

%% Typeset using double-spaced output
%% \renewcommand\baselinestretch{1.67}

\begin{document}

%%%%%%%%%%%%%%%%%%%%%%%%%%%%%%%%%%%%%%%%%%%%%%%%%%%%%%%%%%%%%%%%%%%%%%
%% Macros to decide whether printing ArXiv version with appendix 
%% or ETDS version without appendix.
\newif\ifArXivVersion
\ArXivVersiontrue
%% \ArXivVersionfalse

\ifArXivVersion
\def\EndNote#1{\footnote{For further details, see Appendix \ref{#1}.}}
\else
\def\EndNote#1{}
\fi
%%%%%%%%%%%%%%%%%%%%%%%%%%%%%%%%%%%%%%%%%%%%%%%%%%%%%%%%%%%%%%%%%%%%%%

\allowdisplaybreaks

%%%%%%%%%%%%%%%%%%%%%%%%%%%%%%%%%%%%%%%%%%%%%%%%%%%%%%%%%%%%%%%%%%%%%%
%% Title and Author Information

\title[Dynamical Degree and Canonical Height]
{Dynamical Degree, Arithmetic Entropy, and Canonical Heights for 
Dominant Rational Self-Maps of Projective Space}
\date{\today}
\author[Joseph H. Silverman]{Joseph H. Silverman}
\email{jhs@math.brown.edu}
\address{Mathematics Department, Box 1917
         Brown University, Providence, RI 02912 USA}
\subjclass[2010]{Primary: 37P30; Secondary:  11G50, 37F10, 37P15}
\keywords{dynamical degree, arithmetic entropy, canonical height, rational map}
\thanks{The author's research supported by DMS-0854755.}

%%%%%%%%%%%%%%%%%%%%%%%%%%%%%%%%%%%%%%%%%%%%%%%%%%%%%%%%%%%%%%%%%%%%%%

% \allowdisplaybreaks

\hyphenation{ca-non-i-cal semi-abel-ian}

%%%%%%%%%%%%%%%%%%%%%%%%%%%%%%%%%%%%%%%%%%%%%%%%%%%%%%%%%%%%%%%%%%%%%%
% Theorem environments

\newtheorem{theorem}{Theorem}
\newtheorem{lemma}[theorem]{Lemma}
\newtheorem{conjecture}[theorem]{Conjecture}
\newtheorem{proposition}[theorem]{Proposition}
\newtheorem{corollary}[theorem]{Corollary}
\newtheorem*{claim}{Claim}

\theoremstyle{definition}
% The * surpresses numbering
\newtheorem*{definition}{Definition}
\newtheorem{example}[theorem]{Example}
\newtheorem{remark}[theorem]{Remark}
\newtheorem{question}[theorem]{Question}

\theoremstyle{remark}
\newtheorem*{acknowledgement}{Acknowledgements}
\newtheorem*{addendum}{Addendum}

%%%%%%%%%%%%%%%%%%%%%%%%%%%%%%%%%%%%%%%%%%%%%%%%%%%%%%%%%%%%%%%%%%%%%%

%%%%%%%% Set Up Environment for Notation %%%%%%%%%%%%%%
% This is currently set to allow quite wide items to be defined
\newenvironment{notation}[0]{%
  \begin{list}%
    {}%
    {\setlength{\itemindent}{0pt}
     \setlength{\labelwidth}{4\parindent}
     \setlength{\labelsep}{\parindent}
     \setlength{\leftmargin}{5\parindent}
     \setlength{\itemsep}{0pt}
     }%
   }%
  {\end{list}}

%%%%%%%% Set Up Environment for Parts in Theorems %%%%%%%%%%%%%%
\newenvironment{parts}[0]{%
  \begin{list}{}%
    {\setlength{\itemindent}{0pt}
     \setlength{\labelwidth}{1.5\parindent}
     \setlength{\labelsep}{.5\parindent}
     \setlength{\leftmargin}{2\parindent}
     \setlength{\itemsep}{0pt}
     }%
   }%
  {\end{list}}
% Use \Part{(a)}, instead of \item[(a)], to ensure upright font
\newcommand{\Part}[1]{\item[\upshape#1]}

%%%%%%%% Set Up Macro for Cases %%%%%%%%%%%%%%
\def\Case#1#2{%
\paragraph{\textbf{\boldmath Case #1: #2.}}\hfil\break\ignorespaces}

%%%%%%%%%%%%%%%%%%
% Greek Alphabet %
%%%%%%%%%%%%%%%%%%
\renewcommand{\a}{\alpha}
\renewcommand{\b}{\beta}
\newcommand{\g}{\gamma}
\renewcommand{\d}{\delta}
\newcommand{\e}{\epsilon}
\newcommand{\f}{\varphi}
\newcommand{\bfphi}{{\boldsymbol{\f}}}
\renewcommand{\l}{\lambda}
\renewcommand{\k}{\kappa}
\newcommand{\lhat}{\hat\lambda}
\newcommand{\m}{\mu}
\newcommand{\bfmu}{{\boldsymbol{\mu}}}
\renewcommand{\o}{\omega}
\renewcommand{\r}{\rho}
\newcommand{\s}{\sigma}
\newcommand{\sbar}{{\bar\sigma}}
\renewcommand{\t}{\tau}
\newcommand{\z}{\zeta}
\newcommand{\bfzeta}{{\boldsymbol{\zeta}}}

\newcommand{\D}{\Delta}
\newcommand{\G}{\Gamma}
\newcommand{\F}{\Phi}
\renewcommand{\L}{\Lambda}

%%%%%%%%%%%%%%%%%%%%
% Fraktur Alphabet %
%%%%%%%%%%%%%%%%%%%%
\newcommand{\ga}{{\mathfrak{a}}}
\newcommand{\gb}{{\mathfrak{b}}}
\newcommand{\gn}{{\mathfrak{n}}}
\newcommand{\gp}{{\mathfrak{p}}}
\newcommand{\gP}{{\mathfrak{P}}}
\newcommand{\gq}{{\mathfrak{q}}}

%%%%%%%%%%%%%%%%%%%
% Barred Alphabet %
%%%%%%%%%%%%%%%%%%%
\newcommand{\Abar}{{\bar A}}
\newcommand{\Ebar}{{\bar E}}
\newcommand{\kbar}{{\bar k}}
\newcommand{\Kbar}{{\bar K}}
\newcommand{\Pbar}{{\bar P}}
\newcommand{\Sbar}{{\bar S}}
\newcommand{\Tbar}{{\bar T}}
\newcommand{\gbar}{{\bar\gamma}}
\newcommand{\lbar}{{\bar\lambda}}
\newcommand{\ybar}{{\bar y}}
\newcommand{\phibar}{{\bar\f}}

%%%%%%%%%%%%%%%%%%%%%%%%%
% Calligraphic Alphabet %
%%%%%%%%%%%%%%%%%%%%%%%%%
\newcommand{\Acal}{{\mathcal A}}
\newcommand{\Bcal}{{\mathcal B}}
\newcommand{\Ccal}{{\mathcal C}}
\newcommand{\Dcal}{{\mathcal D}}
\newcommand{\Ecal}{{\mathcal E}}
\newcommand{\Fcal}{{\mathcal F}}
\newcommand{\Gcal}{{\mathcal G}}
\newcommand{\Hcal}{{\mathcal H}}
\newcommand{\Ical}{{\mathcal I}}
\newcommand{\Jcal}{{\mathcal J}}
\newcommand{\Kcal}{{\mathcal K}}
\newcommand{\Lcal}{{\mathcal L}}
\newcommand{\Mcal}{{\mathcal M}}
\newcommand{\Ncal}{{\mathcal N}}
\newcommand{\Ocal}{{\mathcal O}}
\newcommand{\Pcal}{{\mathcal P}}
\newcommand{\Qcal}{{\mathcal Q}}
\newcommand{\Rcal}{{\mathcal R}}
\newcommand{\Scal}{{\mathcal S}}
\newcommand{\Tcal}{{\mathcal T}}
\newcommand{\Ucal}{{\mathcal U}}
\newcommand{\Vcal}{{\mathcal V}}
\newcommand{\Wcal}{{\mathcal W}}
\newcommand{\Xcal}{{\mathcal X}}
\newcommand{\Ycal}{{\mathcal Y}}
\newcommand{\Zcal}{{\mathcal Z}}

%%%%%%%%%%%%%%%%%%%%%%%%%%%%
% Blackboard Bold Alphabet %
%%%%%%%%%%%%%%%%%%%%%%%%%%%%
\renewcommand{\AA}{\mathbb{A}}
\newcommand{\BB}{\mathbb{B}}
\newcommand{\CC}{\mathbb{C}}
\newcommand{\FF}{\mathbb{F}}
\newcommand{\GG}{\mathbb{G}}
\newcommand{\NN}{\mathbb{N}}
\newcommand{\PP}{\mathbb{P}}
\newcommand{\QQ}{\mathbb{Q}}
\newcommand{\RR}{\mathbb{R}}
\newcommand{\ZZ}{\mathbb{Z}}

%%%%%%%%%%%%%%%%%%%%%%%%%%
% Boldface Math Alphabet %
%%%%%%%%%%%%%%%%%%%%%%%%%%
\newcommand{\bfa}{{\mathbf a}}
\newcommand{\bfb}{{\mathbf b}}
\newcommand{\bfc}{{\mathbf c}}
\newcommand{\bfd}{{\mathbf d}}
\newcommand{\bfe}{{\mathbf e}}
\newcommand{\bff}{{\mathbf f}}
\newcommand{\bfg}{{\mathbf g}}
\newcommand{\bfp}{{\mathbf p}}
\newcommand{\bfr}{{\mathbf r}}
\newcommand{\bfs}{{\mathbf s}}
\newcommand{\bft}{{\mathbf t}}
\newcommand{\bfu}{{\mathbf u}}
\newcommand{\bfv}{{\mathbf v}}
\newcommand{\bfw}{{\mathbf w}}
\newcommand{\bfx}{{\mathbf x}}
\newcommand{\bfy}{{\mathbf y}}
\newcommand{\bfz}{{\mathbf z}}
\newcommand{\bfA}{{\mathbf A}}
\newcommand{\bfF}{{\mathbf F}}
\newcommand{\bfB}{{\mathbf B}}
\newcommand{\bfD}{{\mathbf D}}
\newcommand{\bfG}{{\mathbf G}}
\newcommand{\bfI}{{\mathbf I}}
\newcommand{\bfM}{{\mathbf M}}
\newcommand{\bfzero}{{\boldsymbol{0}}}

%%%%%%%%%%%%%%%%%%%%%%%%%%%%%%
% Miscellaneous New Commands %
%%%%%%%%%%%%%%%%%%%%%%%%%%%%%%
\newcommand{\Aut}{\operatorname{Aut}}
\newcommand{\codim}{\operatorname{codim}}
\newcommand{\diag}{\operatorname{diag}}
\newcommand{\Disc}{\operatorname{Disc}}
\newcommand{\Div}{\operatorname{Div}}
\renewcommand{\div}{{\textup{div}}}
\newcommand{\Dom}{\operatorname{Dom}}
\newcommand{\End}{\operatorname{End}}
\newcommand{\Ext}{\operatorname{Ext}}
\newcommand{\Fbar}{{\bar{F}}}
\newcommand{\Gal}{\operatorname{Gal}}
\newcommand{\GL}{\operatorname{GL}}
\newcommand{\Hom}{\operatorname{Hom}}
\newcommand{\Index}{\operatorname{Index}}
\newcommand{\Image}{\operatorname{Image}}
\newcommand{\hhat}{{\hat h}}
\newcommand{\Ker}{{\operatorname{ker}}}
\newcommand{\Lift}{\operatorname{Lift}}
\newcommand{\Mat}{\operatorname{Mat}}
\newcommand{\maxplus}{\operatorname{\textup{max}^{\scriptscriptstyle+}}}
\newcommand{\MOD}[1]{~(\textup{mod}~#1)}
\newcommand{\Mor}{\operatorname{Mor}}
\newcommand{\Moduli}{\mathcal{M}}
\newcommand{\Norm}{{\operatorname{\mathsf{N}}}}
\newcommand{\notdivide}{\nmid}
\newcommand{\normalsubgroup}{\triangleleft}
\newcommand{\NS}{\operatorname{NS}}
\newcommand{\onto}{\twoheadrightarrow}
\newcommand{\ord}{\operatorname{ord}}
\newcommand{\Orbit}{\mathcal{O}}
\newcommand{\Per}{\operatorname{Per}}
\newcommand{\Perp}{\operatorname{Perp}}
\newcommand{\PrePer}{\operatorname{PrePer}}
\newcommand{\PGL}{\operatorname{PGL}}
\newcommand{\Pic}{\operatorname{Pic}}
\newcommand{\Prob}{\operatorname{Prob}}
\newcommand{\Qbar}{{\bar{\QQ}}}
\newcommand{\rank}{\operatorname{rank}}
\newcommand{\Rat}{\operatorname{Rat}}
\newcommand{\rbar}{{\overline{r}}}
\newcommand{\Resultant}{\operatorname{Res}}
\renewcommand{\setminus}{\smallsetminus}
\newcommand{\sgn}{\operatorname{sgn}} 
\newcommand{\SL}{\operatorname{SL}}
\newcommand{\Span}{\operatorname{Span}}
\newcommand{\Spec}{\operatorname{Spec}}
\newcommand{\Support}{\operatorname{Supp}}
\newcommand{\tors}{{\textup{tors}}}
\newcommand{\Trace}{\operatorname{Trace}}
\newcommand{\tr}{{\textup{tr}}} % for K/k trace
\newcommand{\UHP}{{\mathfrak{h}}}    % Upper half plane
\newcommand{\<}{\langle}
\renewcommand{\>}{\rangle}

\newcommand{\ds}{\displaystyle}
\newcommand{\longhookrightarrow}{\lhook\joinrel\longrightarrow}
\newcommand{\longonto}{\relbar\joinrel\twoheadrightarrow}

%%%%%%%%%%%%%%%%%%%%%%%%%%%%%%%%%%%%%%%%%%%%%%%%%%%%%%%%%%%%%%%%%%%%%%

\begin{abstract}
Let $\f:\PP^N\dashrightarrow\PP^N$ be a dominant rational map. The
dynamical degree of~$\f$ is the quantity $\d_\f=\lim(\deg\f^n)^{1/n}$.
When~$\f$ is defined over~$\Qbar$, we define the arithmetic degree of
a point~$P\in\PP^N(\Qbar)$ to be~$\a_\f(P)=\limsup
h\bigl(\f^n(P)\bigr)^{1/n}$ and the canonical height of~$P$ to
be~$\hhat_\f(P)=\limsup \d_\f^{-n}n^{-\ell_\f}h\bigl(\f^n(P)\bigr)$
for an appropriately chosen~$\ell_\f$. In this article we begin by
proving some elementary relations and making some deep conjectures
relating~$\d_\f$,~$\a_\f(P)$, $\hhat_\f(P)$, and the Zariski density of
the orbit~$\Orbit_\f(P)$ of~$P$. We then prove our conjectures for
monomial maps.
\end{abstract}

%% Non-TeX abstract (for ArXiv)
%% Let F : P^N --> P^N be a dominant rational map. The dynamical
%% degree of F is the quantity d_F = lim (deg F^n)^(1/n). When F is
%% defined over a number field, we define the arithmetic degree of an
%% algebraic point P to be a_F(P) = limsup h(F^n(P))^(1/n) and the
%% canonical height of $P$ to be h_F(P) = limsup h(F^n(P))/n^k d_F^n
%% for an appropriately chosen k = k_F. In this article we prove some
%% elementary relations and make some deep conjectures relating d_F,
%% a_F(P), h_F(P). We prove our conjectures for semisimple monomial
%% maps.

\maketitle

%%%%%%%%%%%%%%%%%%%%%%%%%%%%%%%%%%%%%%%%%%%%%%%%%%%%%%%%%%%%%%%%%%%%%%
%% Print TOC for ArXiv
\ifArXivVersion
\setcounter{tocdepth}{1} 
\tableofcontents 
\fi
%%%%%%%%%%%%%%%%%%%%%%%%%%%%%%%%%%%%%%%%%%%%%%%%%%%%%%%%%%%%%%%%%%%%%%

%%%%%%%%%%%%%%%%%%%%%%%%%%%%%%%%%%%%%%%%%%%%%%%%%%%%%%%%%%%%%%%%%%%%%%
\section{Introduction}
%%%%%%%%%%%%%%%%%%%%%%%%%%%%%%%%%%%%%%%%%%%%%%%%%%%%%%%%%%%%%%%%%%%%%%
Let $\f:\PP^N\dashrightarrow\PP^N$ be a dominant rational map, that
is, a map given by homogeneous polynomials~$\f_0,\ldots,\f_N$ of the
same degree having no common nontrivial factors. The map~$\f$ is
called \emph{algebraically stable}~\cite{MR1369137} if
\[
  \deg (\f^n) = (\deg\f)^n\quad\text{for all $n\ge1$.}
\]
Examples of algebraically stable maps include morphisms and regular
affine automorphisms.
\par
In this paper we are principally concerned with the geometry and
arithmetic of maps that are not algebraically stable. The (first)
\emph{dynamical degree of~$\f$} is defined by
\[
  \d_\f = \lim_{n\to\infty} \bigl(\deg(\f^n)\bigr)^{1/n},
\]
and $\log\d_\f$ is sometimes called the \emph{algebraic entropy
of~$\f$}; see~\cite{MR1704282}.  The extent to which~$\d_\f$ differs
from~$\deg(\f)$ is a rough measure of the failure of~$\f$ to be
algebraically stable.  Dynamical degrees were initially studied by
Russakovskii and Shiffman~\cite{MR1488341} and
Arnol$'$d~\cite{MR1139553} in the 1990s, and they have since attracted
considerable attention; see for example~\cite{MR2111418,
  MR2449533, MR2428100, MR1877828, MR1867314, MR2180409, MR2339287,
  arxiv1011.2854, MR2358970, arxiv1007.0253, arxiv1010.6285, 
  MR1836434, MR2273670,arxiv1106.1825}.  
Bellon and Viallet~\cite{MR1704282} conjectured that~$\d_\f$ is an
algebraic integer, while Hasselblatt and Propp~\cite{MR2358970} (see
also~\cite{MR2449533}) proved that the sequence~$\deg(\f^n)$ may be
quite irregular in the sense that the power series $\sum_{n\ge0}
\deg(\f^n)T^n$ need not be a rational function.
\par
The primary objectives of this paper are to study an arithmetic analogue
of the dynamical degree and to define an associated canonical height
function for dominant rational maps.  In this introduction we
make a number of conjectures, which we will prove
for monomial maps.
\par
So we now assume that~$\f$ is defined over~$\Qbar$, and we consider the
iterates of~$\f$ applied to points in~$\PP^N(\Qbar)$.  Let
\[
  h:\PP^N(\Qbar)\to[0,\infty)
\]
denote the usual Weil height; see,
e.g.,~\cite{bombierigubler,hindrysilverman:diophantinegeometry,lang:diophantinegeometry,MR2316407,MR2514094}
for definitions and basic properties of~$h$. An elementary triangle
inequality estimate shows that~$h\bigl(\f^n(P)\bigr)\ll(\deg\f)^n$.
For points~$P\in\PP^N(\Qbar)$ whose orbit~$\Orbit_\f(P)$ is disjoint
from the indeterminacy locus~$Z(\f)$ of~$\f$, we define the
\emph{arithmetic degree of~$\f$ at~$P$} to be the quantity
\[
  \a_\f(P) = \limsup_{n\to\infty} h\bigl(\f^n(P)\bigr)^{1/n}.
\]
\par
We note that since~$h(P)$ is, roughly, the information-theoretic
content of~$P$, it is reasonable to say that~$\log\a_\f(P)$
measures the \emph{arithmetic entropy} of the orbit~$\Orbit_\f(P)$.
It is not hard to show (Proposition~\ref{proposition:hfnPledfnhP})
that
\begin{equation}
  \label{eqn:afPledf}
  \a_\f(P) \le \d_\f.
\end{equation}
The fact that~\eqref{eqn:afPledf} may be a strict inequality
reflects that fact that some orbits capture only a part of the
complexity of the map~$\f$. Our first conjecture
describes a sufficient condition for equality.
\par
We set the notation
\[
  \PP^N(\Qbar)_\f = \bigl\{
    \text{$P\in\PP^N(\Qbar)$ such that $\Orbit_\f(P)\cap Z(\f)=\emptyset$}
  \bigr\}.
\]
We remark that $\PP^N(\Qbar)_\f$ is Zariski dense in~$\PP^N$,
although the proof is not easy; see for example~\cite{MR2784670}.

\begin{conjecture}
\label{conjecture:setofafPs}
Let $\f:\PP^N\dashrightarrow\PP^N$ be a dominant rational map defined
over~$\Qbar$. 
\begin{parts}
\Part{(a)}
The set
\[
  \bigl\{\a_\f(P) :  P\in\PP^N(\Qbar)_\f \bigr\}
\]
is a finite set of algebraic integers. 
\Part{(b)}
Let $P\in\PP^N(\Qbar)_\f$ be a point such that $\Orbit_\f(P)$ is
Zariski dense in~$\PP^N$. Then $\a_\f(P)=\d_\f$.
\end{parts}
\end{conjecture}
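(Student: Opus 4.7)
The plan is to attack both parts via the unifying principle that $\a_\f(P)$ should equal the dynamical degree of the restriction of $\f$ to the Zariski closure of the orbit. Set $Y_P=\overline{\Orbit_\f(P)}$; since $\Orbit_\f(P)\cap Z(\f)=\emptyset$, the image $\f(Y_P\setminus Z(\f))$ is contained in $Y_P$ (its Zariski closure contains the dense subset $\Orbit_\f(\f(P))$), so $\f$ restricts to a dominant rational self-map $\f|_{Y_P}$ with its own dynamical degree $\d_{\f|_{Y_P}}$. Applying~\eqref{eqn:afPledf} to the restriction gives $\a_\f(P)\leq\d_{\f|_{Y_P}}$. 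The heart of the proof is the matching lower bound $\a_\f(P)\geq\d_{\f|_{Y_P}}$: once available, part~(b) follows immediately because Zariski density of $\Orbit_\f(P)$ in $\PP^N$ forces $Y_P=\PP^N$, and part~(a) reduces to finiteness of the set $\bigl\{\d_{\f|_Y}:Y\subseteq\PP^N\text{ a }\f\text{-invariant subvariety}\bigr\}$.

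\textbf{Lower bound and finiteness.} To prove $\a_\f(P)\geq\d_{\f|_{Y_P}}$, I would proceed by contradiction. The assumption $\a_\f(P)=\a<\d_{\f|_{Y_P}}$ yields $h(\f^n(P))\leq C\a^n$ for all $n$. Fix a number field $K$ over which $\f$ and $P$ are defined. For each $D$, consider the space $V_D$ of degree-$D$ forms on $\PP^N$ over $K$; the condition that $F\in V_D$ vanishes at $\f^n(P)$ for $n=0,\ldots,M$ is a system of $M+1$ linear equations on the coefficients of $F$ whose logarithmic height is bounded by $D\cdot h(\f^n(P))\leq DC\a^M$. Bombieri--Vaaler's effective form of Siegel's lemma then produces a nonzero $F\in V_D$ of controlled height vanishing at all $M+1$ orbit points, provided $\binom{N+D}{N}>M+1$. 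By varying $(D,M)$ and exploiting the gap $\d_{\f|_{Y_P}}>\a$, the aim is to force, in the limit, a nontrivial algebraic relation satisfied by the entire orbit, equivalently a proper $\f$-invariant subvariety of $Y_P$ containing $\Orbit_\f(P)$, contradicting density. For the geometric finiteness in~(a) I would stratify $\PP^N$ by dimension and apply noetherian induction on chains of invariant subvarieties; algebraic integrality of each $\d_{\f|_Y}$ then reduces to the Bellon--Viallet conjecture for the restriction, which in turn follows if one can exhibit a projective birational model $X\to Y$ on which the lift $\tilde\f$ is algebraically stable, so that $\d_{\f|_Y}$ arises as the spectral radius of $\tilde\f^*$ acting on $\NS(X)_\RR$.

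\textbf{Main obstacle.} The principal obstacle is the transition from ``nonzero $F$ vanishing on $M+1$ orbit points'' to ``$F$ (or a related polynomial) vanishing on the whole infinite orbit'' in the lower bound argument for~(b). The Siegel-lemma output vanishes only on the specified finite set, and extracting a global vanishing requires either a delicate limit argument controlling the degrees $D_M$ and heights of the polynomials $F_M$ as $M\to\infty$, or a substantial new arithmetic-intersection-theoretic input such as a Philippon-style bound for the arithmetic Chow form of the orbit. Compounding this, the failure of algebraic stability means the polynomial heights of the iterates $\f^n$ can grow faster than $\d_\f^n$, so any cancellation producing $\a<\d_\f$ must be tracked through the possibly irregular sequence $\deg(\f^n)$ --- precisely the phenomenon motivating the correction factor $n^{-\ell_\f}$ in the definition of $\hhat_\f$, and any quantitative comparison between arithmetic and dynamical growth must accommodate it.
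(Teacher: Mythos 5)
The statement you are trying to prove is labelled as a \emph{conjecture} in the paper, and the paper offers no proof of it in general: it establishes the conjecture only for monomial maps (via Theorem~\ref{theorem:linrelsonlogP}, using the product formula, a compactness argument on $n^{-\ell}\rho^{-n}A^n$, Baker's theorem on linear forms in logarithms, and the Jordan decomposition of the associated matrix), and separately for regular affine automorphisms and certain K3 automorphisms. Part~(b) in particular is an open problem (it became known as the Kawaguchi--Silverman conjecture), so no blind argument at this level of generality should be expected to close. Your reduction of both parts to the single inequality $\a_\f(P)\ge\d_{\f|_{Y_P}}$ is a reasonable and standard way to organize the problem, and the upper bound $\a_\f(P)\le\d_{\f|_{Y_P}}$ is essentially correct (modulo extending Proposition~\ref{proposition:hfnPledfnhP} from $\PP^N$ to the possibly singular orbit closure $Y_P$, which is the content of the later Kawaguchi--Silverman paper cited in the Addendum). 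But the lower bound is not a technical obstacle to be smoothed over; it \emph{is} the conjecture.

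Concretely, the Siegel-lemma step does not yield the contradiction you want. Given $h(\f^n(P))\le C\a^n$, Bombieri--Vaaler produces for each $M$ a nonzero form $F_M$ of degree $D_M\sim M^{1/N}$ vanishing at $\f^0(P),\dots,\f^M(P)$, but nothing forces the hypersurfaces $\{F_M=0\}$ to stabilize, to share a component, or to be $\f$-invariant; a Zariski dense set always admits hypersurfaces through any finite subset, so no contradiction with density arises without a genuinely new input (you correctly flag this, but it means the argument is not a proof). Moreover the gap $\a<\d_{\f|_{Y_P}}$ enters nowhere quantitatively in your count: the degree needed to interpolate $M+1$ points is governed by $M$, not by the heights, so the hypothesis $\a<\d$ is never actually used. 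Part~(a) has independent gaps: a dominant rational map can have infinitely many invariant (e.g.\ periodic) subvarieties, so noetherian induction does not bound the set $\{\d_{\f|_Y}\}$; and algebraic integrality of each $\d_{\f|_Y}$ is itself the (open) Bellon--Viallet conjecture, which you invoke rather than prove. If you want a provable instance, follow the paper's route for monomial maps: there the orbit-closure philosophy is made rigorous because invariant subgroups of $\GG_m^N$ correspond to sublattices of $\ZZ^N$, the relevant ``auxiliary relations'' on $\log\|x_j\|_v$ are produced by the product formula rather than by interpolation, and Baker's theorem supplies the descent from $\Qbar$-linear relations to $\ZZ$-linear ones.
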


In Section~\ref{section:monomialmapht0} we prove
Conjecture~\ref{conjecture:setofafPs} for monomial maps on~$\PP^N$.
\par 
As noted earlier, a major objective of this paper is to define and study
canonical heights for general dominant rational maps.  We
recall~\cite{callsilv:htonvariety,MR2316407} that
if~$\f:\PP^N\to\PP^N$ is a \emph{morphism} of degree~$d\ge2$, then the
canonical height associated to~$\f$ is the function
\[
  \hhat_\f:\PP^N(\Qbar)\longrightarrow [0,\infty),\qquad
  \hhat_\f(P) = \lim_{n\to\infty} \frac{1}{d^n}h\bigl(\f^n(P)\bigr).
\]
The canonical height for a morphism is characterized by the
properties
\[
  \hhat_\f(P)=h(P)+O(1)\quad\text{and}\quad
  \hhat_\f\bigl(\f(P)\bigr)=d\hhat_\f(P),
\]
from which one easily deduces that
\[
  P\in\PrePer(\f)\quad\Longleftrightarrow\quad \hhat_\f(P)=0.
\]
\par
For general dominant rational maps we have $\deg(\f^n) \approx
\d_\f^n$, so it is natural to look at
\[
  \frac{1}{\d_\f^n}h\bigl(\f^n(P)\bigr),
\]
but the approximation $\deg(\f^n) \approx \d_\f^n$ is insufficiently
precise. For example, the map $\f(x,y)=(x^d y,y^d)$
satisfies~$\deg(\f^n)=d^n+nd^{n-1}$, so~$\deg(\f^n)$ grows faster
than~$\d_\f^n=d^n$. This leads us to make the following conjecture
(cf.\ \cite{MR2358970}), which will provide the required correction
factor.

\begin{conjecture}
\label{conjecture:dyndegtimespoly}
Let~$\f:\PP^N\dashrightarrow\PP^N$ be a dominant rational map.
Then the infimum
\[
  \ell_\f = \inf\left\{ 
    \ell\ge0 : \sup_{n\ge1} \frac{\deg(\f^n)}{n^{\ell}\,\d_\f^{n}} < \infty \right\}
\]
exists and is an integer satisfying $0\le\ell_\f\le N$.
\end{conjecture}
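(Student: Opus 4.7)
The plan is to pass to a smooth projective birational model $\pi\colon X\to\PP^N$ on which the induced rational self-map $\tilde\f:=\pi^{-1}\circ\f\circ\pi$ is \emph{algebraically stable}, meaning that $(\tilde\f^n)^*=(\tilde\f^*)^n$ as linear endomorphisms of $N_X:=\NS(X)_{\RR}$. Granting the existence of such a model, set $H=\pi^*\mathcal{O}_{\PP^N}(1)$; the basic intersection-theoretic identity then reads
\[
  \deg(\f^n)=\bigl((\tilde\f^*)^n H\bigr)\cdot H^{N-1},
\]
and Gelfand's formula, combined with the definition of $\d_\f$, forces the spectral radius of $\tilde\f^*$ on $N_X$ to equal $\d_\f$.

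Next, carry out a Jordan decomposition of $\tilde\f^*$ over $\CC$. The class $(\tilde\f^*)^n H$ is then a finite sum of terms of the form $\binom{n}{j}\lambda^{n-j}v$, where $\lambda$ runs over eigenvalues of $\tilde\f^*$ and $v$ over a Jordan basis. Pairing against $H^{N-1}$ and isolating the term of largest growth gives
\[
  \deg(\f^n)=c\,n^{s}\,\d_\f^{\,n}+O\!\bigl(n^{s-1}\,\d_\f^{\,n}\bigr)
\]
for a real $c>0$ and a unique integer $s\ge 0$, where $s+1$ is the size of the largest Jordan block whose eigenvalue has modulus $\d_\f$ and whose generalized eigenvector pairs nontrivially with $H^{N-1}$. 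This identification shows that $\ell_\f=s$ is a nonnegative integer and that the infimum defining it is achieved.

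The bound $\ell_\f\le N$ is the subtlest point, because $\dim N_X$ can be inflated arbitrarily by further blow-ups. The expected mechanism is Hodge-type positivity: the Hodge index theorem and the Khovanskii--Teissier inequalities constrain the size of a Jordan block of $\tilde\f^*$ that can support a class pairing nontrivially with the full power $H^{N-1}$, capping it at $N+1$. In the monomial case treated in Section~\ref{section:monomialmapht0} this entire argument collapses to something elementary: $\f$ corresponds to a matrix $A\in\Mat_N(\ZZ)$, one has $\deg(\f^n)\asymp\|A^n\|$ in a suitable norm, and the Jordan form of $A$ delivers both conclusions at once, in fact with $\ell_\f\le N-1$.

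The principal obstacle is the existence of an algebraically stable model, which is settled by Diller--Favre in dimension~$2$ but only partially understood in higher dimension. A fully general proof would likely have to bypass the need for a single good model by working with limit invariants such as $b$-divisors on the Riemann--Zariski space, or with a direct-limit action of $\tilde\f^*$ on the tower of spaces $N_X$ as $X$ varies over smooth birational models of $\PP^N$, and then extracting an analogous Jordan-type structure that still produces an integer exponent bounded by $N$.
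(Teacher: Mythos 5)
The first thing to note is that the statement you are proving is Conjecture~\ref{conjecture:dyndegtimespoly}: the paper does not prove it, so there is no proof of record to compare yours against. What the paper does record is that the conjecture is known in special cases --- for monomial maps by Lin and by Jonsson--Wulcan (Theorem~\ref{theorem:ellinvariantmonomap}, which gives the stronger two-sided bound $\deg(\f_A^n)\asymp n^{\ell(A)}\rho(A)^n$ directly from the Jordan form of $A$, exactly as in your final paragraph, and with the sharper bound $\ell(A)\le N-1$), for birational maps of $\PP^2$ with $\d_\f=1$ via Diller--Favre, and for polynomial maps of $\AA^2$. Your proposal must therefore be judged as a strategy for an open problem, and as such it has genuine gaps beyond the one you acknowledge.

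The most serious gap is the step you flag as the "principal obstacle": the existence of an algebraically stable birational model. This is not merely unresolved in higher dimension --- it is known to \emph{fail}. There are monomial maps that cannot be made algebraically stable on any birational (toric) model; the very papers of Lin and Jonsson--Wulcan cited here are devoted to determining when stabilization of monomial maps is possible, and the answer is: not always. So the conjecture cannot be reduced to the stable case, and in general there is no single linear operator $\tilde\f^*$ on a finite-dimensional space $\NS(X)_\RR$ to which your Jordan-block analysis applies. Your fallback (b-divisors on the Riemann--Zariski space, or the direct limit of the groups $\NS(X)_\RR$ over all models) is the right direction, but that limit space is infinite-dimensional, and extracting from it an integer exponent bounded by $N$ is precisely the open content of the conjecture, not a routine extension of the finite-dimensional argument. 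Two further assertions are left unproved even in the stable case: (i) the bound $\ell_\f\le N$ is attributed to "Hodge-type positivity" capping Jordan block sizes at $N+1$, but no such inequality is derived, and since $\dim\NS(X)_\RR$ can be made arbitrarily large by blow-ups, some genuine input (Khovanskii--Teissier log-concavity of the higher dynamical degrees, or a Diller--Favre-type classification) is required; (ii) when several eigenvalues of modulus $\d_\f$ carry maximal Jordan blocks, their contributions to $\bigl((\tilde\f^*)^nH\bigr)\cdot H^{N-1}$ can interfere, so concluding $\deg(\f^n)=c\,n^s\d_\f^n+O(n^{s-1}\d_\f^n)$ with a single constant $c>0$ --- rather than merely $\limsup_n \deg(\f^n)/(n^s\d_\f^n)\in(0,\infty)$, which is what one actually needs and can extract from positivity of $\deg(\f^n)$ together with a Vandermonde-type argument ruling out cancellation along all subsequences --- is an overstatement.
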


We note that Conjecture~\ref{conjecture:dyndegtimespoly} is really
three conjectures, first that $\d_\f^{-n}\deg(\f^n)$ grows at most
polynomially in~$n$, second that the growth rate is
essentially~$n^\ell$ for an integer~$\ell$, and third that~$\ell$ is
between~$0$ and~$N$. As noted by the referee, a bold person might even
conjecture that $\deg(\f^n)\asymp n^{\ell_\f}\d_\f^n$, which would
preclude for example the appearence of powers of~$\log n$ in the
growth rate.  See Section~\ref{subsection:ellconj} for a further
discussion of Conjecture~\ref{conjecture:dyndegtimespoly}.

\begin{definition}
\label{definition:canht}
With notation as above, the \emph{canonical height
  of~$P\in\PP^N(\Qbar)_\f$ with respect to~$\f$} is
\[
  \hhat_\f(P) = \limsup_{n\to\infty} \frac{1}{n^{\ell_\f}\d_\f^n}h\bigl(\f^n(P)\bigr).
\]
\end{definition}

We note that the limsup is necessary, since it is easy to construct
examples for which the limit diverges by oscillation; see
Example~\ref{example:hhatnotlimit}. Also, it is easy to check
(Proposition~\ref{proposition:canhtprops}) that
\[
  \hhat_\f\bigl(\f(P)\bigr)=\d_\f\hhat_\f(P).
\]
If~$\d_\f>1$, we suspect that~$\hhat_\f(P)$ is finite, and we prove
that this holds for monomial maps
(Proposition~\ref{proposition:adjhtformonmaps}).  However, if
$\d_\f=1$, then it is possible to have~$\ell_\f\ge1$ and
$\hhat_\f(P)=\infty$, as we show in Example~\ref{example:infiniteht}.
\par
It is not hard to prove that
\begin{equation}
  \label{eqn:hfPgt0impafPeqdf}
  \hhat_\f(P)>0\quad\Longrightarrow\quad\a_\f(P)=\d_\f;
\end{equation}
see Proposition~\ref{proposition:canhtprops}(d). The converse
to~\eqref{eqn:hfPgt0impafPeqdf} is not true in general; see the
discussion before the statement of
Corollary~\ref{corollary:hgt0iffaeqd}. It would be very interesting to
find general geometric conditions on~$\f$ that imply the converse
of~\eqref{eqn:hfPgt0impafPeqdf}. We  prove in
Corollary~\ref{corollary:hgt0iffaeqd} that the converse holds for
monomial maps associated to diagonalizable matrices.

A fundamental property of the canonical height for morphisms is that
height zero characterizes points with finite orbit.  (N.B. We always
work over~$\Qbar$. The situation over function fields is subtler; see
for example~\cite{arxiv0601046,arxiv0510444}.)  For any dominant
rational map~$\f$ with $\d_\f>1$ or $\ell_\f>0$, we clearly have
\[
  P\in\PrePer(\f)\quad\Longrightarrow\quad \hhat_\f(P)=0,
\]
but the converse is not true in general, since there may be
subvarieties on which~$\f$ acts via lower degree.  This leads to the
following conjecture.

\begin{conjecture}
\label{conjecture:hzeroeqPPrePer}
Let~$\f:\PP^N\dashrightarrow\PP^N$ be a dominant rational map defined
over~$\Qbar$ with dynamical degree $\d_\f>1$, and
let~$P\in\PP^N(\Qbar)_\f$ be a point
whose orbit~$\Orbit_\f(P)$ is Zariski dense
in~$\PP^N$.  Then $\hhat_\f(P)>0$.
\end{conjecture}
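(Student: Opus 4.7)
Since this statement is formulated as a conjecture, I assume the paper proves only the monomial case, and I sketch the strategy there. The starting point is Conjecture~\ref{conjecture:setofafPs}(b), which the paper proves for monomial $\f$: it gives $\a_\f(P) = \d_\f$ whenever $\Orbit_\f(P)$ is Zariski dense, so $h(\f^n(P))$ grows like $\d_\f^n$ to leading order. The task is then to upgrade this to a lower bound capturing the polynomial correction $n^{\ell_\f}$ built into $\hhat_\f$.

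Let $\f$ be the monomial map attached to $A \in \Mat_N(\ZZ)$, and for $P=(p_1,\ldots,p_N)\in\GG_m^N(K)$ defined over a number field $K$, set $\mathbf{l}_v := (\log|p_j|_v)_{j=1}^{N}\in\RR^N$ for each place $v$ of $K$. Computing the Weil height in these coordinates gives
\[
  h(\f^n(P)) = \frac{1}{[K:\QQ]} \sum_v [K_v:\QQ_v]\, \max\bigl(0,\ \<A^n_i, \mathbf{l}_v\>\, :\, 1\le i\le N\bigr),
\]
where $A^n_i$ denotes the $i$-th row of $A^n$. Put $A$ in Jordan form and fix a dominant eigenvalue $\lambda$ (with $|\lambda|=\d_\f$) sitting atop a Jordan block of maximal size $\ell_\f+1$, so that
\[
  A^n \;=\; n^{\ell_\f}\,\lambda^n\,M_n \;+\; O\bigl(n^{\ell_\f-1}\,\d_\f^n\bigr),
\]
where $M_n$ is a bounded matrix-valued sequence whose image spans the top piece of the generalized eigenspace for $\lambda$. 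By compactness, extract a subsequence $n_k$ along which both $(\lambda/\d_\f)^{n_k}$ and $M_{n_k}$ converge, to a unimodular number and a nonzero matrix $M_\infty$, respectively.

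The desired conclusion $\hhat_\f(P)>0$ then reduces, via the above height formula, to exhibiting a place $v$ with $M_\infty \mathbf{l}_v \ne 0$: such a $v$ contributes a term of size $\asymp n_k^{\ell_\f}\,\d_\f^{n_k}$ to $h(\f^{n_k}(P))$, provided one verifies that the dominant contributions do not cancel under the product formula. The \textbf{main obstacle} is precisely this translation from the geometric hypothesis (Zariski density of $\Orbit_\f(P)$) into the linear-algebraic non-vanishing $M_\infty \mathbf{l}_v\ne 0$ at some $v$. If instead $M_\infty \mathbf{l}_v = 0$ for every place $v$, then the log-coordinate vectors of $P$ lie simultaneously in a proper rational subspace; re-exponentiating produces a nontrivial multiplicative relation that persists along the entire orbit, and via a Laurent-type theorem on subvarieties of tori this forces $\overline{\Orbit_\f(P)}$ into a proper subvariety of $\GG_m^N$, contradicting density. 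Pinning down this last step --- disentangling torsion contributions at non-archimedean places, and handling the case where several Jordan blocks or Galois-conjugate eigenvalues of maximal modulus simultaneously contribute to $M_n$ --- is where the real technical work concentrates.
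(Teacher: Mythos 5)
Your outline follows the same general route as the paper's proof of the monomial case (Theorem~\ref{theorem:linrelsonlogP} and Corollary~\ref{corollary:ht0orbitnotdense}): write the height as a sum of local terms $\maxplus(A^n\log\|P\|_v)$, use the Jordan form to extract a convergent subsequence $n^{-\ell}\rho^{-n}A^{n_k}\to B$, use the product formula to upgrade $\maxplus(B\log\|P\|_v)=0$ to $B\log\|P\|_v=0$, and then argue that this forces the orbit into a proper subgroup. But the step you describe as ``re-exponentiating produces a nontrivial multiplicative relation'' hides the one genuinely deep input, and as written it fails. The kernel of your $M_\infty$ (the paper's $B$) is \emph{not} a rational subspace: it is $U\otimes_{\Qbar}\CC$ for a $\Qbar$-subspace $U$ built from the non-maximal Jordan data, and $U$ is defined only over the splitting field of $\det(T-A)$. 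So what you obtain at an archimedean place $v$ is a $\Qbar$-linear relation among the real numbers $\log\|x_1\|_v,\ldots,\log\|x_N\|_v$. To exponentiate you need a relation with \emph{integer} coefficients, and passing from $\Qbar$-linear to $\QQ$-linear relations among logarithms of algebraic numbers is exactly Baker's theorem on linear forms in logarithms (Theorem~\ref{theorem:baker} in the paper), which guarantees that $\Perp_{\Qbar}(\log\|P\|_v)$ is $\Gal(\Qbar/\QQ)$-invariant and hence contains the Galois-symmetrization of $\Perp_{\Qbar}(U)$, a space that descends to $\QQ$ and then to an integral lattice $L$. Your sketch never invokes this, and without it the argument does not close; note also that the resulting dimension count is governed by $\rbar(A)$, the number of Galois conjugates of maximal Jordan subspaces, which is precisely the bookkeeping your single-eigenvalue normalization $n^{\ell_\f}\lambda^n M_n$ obscures.

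Two smaller points. First, once one has the integral relations $\bfe\in L$, the right tool is Kronecker's theorem (a monomial $\prod x_j^{e_j}$ with $\|\cdot\|_v=1$ at every place is a root of unity), not a Laurent/Mordell--Lang type theorem on subvarieties of tori; the relations persist along the orbit simply because $\hhat_\f(\f^n(P))=\d_\f^n\hhat_\f(P)=0$. Second, your opening appeal to Conjecture~\ref{conjecture:setofafPs}(b) is both logically backwards (in the paper that statement is \emph{deduced} from the present one via \eqref{eqn:hfPgt0impafPeqdf}) and quantitatively empty: $\a_\f(P)=\d_\f$ is a statement about $\limsup h(\f^n(P))^{1/n}$ and carries no information at the polynomial scale $n^{\ell_\f}$, as Example~\ref{example:hteqinfty} with $P=[1,2,1]$ shows.
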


We observe that Conjecture~\ref{conjecture:hzeroeqPPrePer} and the
elementary implication~\eqref{eqn:hfPgt0impafPeqdf} imply
Conjecture~\ref{conjecture:setofafPs}(b).

The main theorem in this paper (Theorem~\ref{theorem:linrelsonlogP})
gives a geometric description of the set of points satisfying
$\hhat_\f(P)=0$ for monomial maps~$\f$. Immediate corollaries include
proofs of Conjectures~\ref{conjecture:setofafPs}
and~\ref{conjecture:hzeroeqPPrePer} for monomial maps.  We also note
that a strong form of Conjecture~\ref{conjecture:dyndegtimespoly} is
true for monomial maps; this was proven independently by
Lin~\cite{arxiv1007.0253} and Jonsson and
Wulcan~\cite{arXiv:1001.3938}.

We recall that a \emph{monomial map} is an endomorphism of the
torus~$\GG_m^N$, i.e., a map
\[
  \f_A : \GG_m^N\longrightarrow\GG_m^N
\]
of the form
\[
  \f_A(X_1,\ldots,X_N)={}
   \left(
    X_1^{a_{11}}X_2^{a_{12}}\cdots X_N^{a_{1N}},\ldots,
    X_1^{a_{N1}}X_2^{a_{N2}}\cdots X_N^{a_{NN}} \right),
\]
where~$A=(a_{ij})$ is an $N$-by-$N$ matrix with integer coefficients.
The associated rational map~$\f_A:\PP^N\dashrightarrow\PP^N$ is
dominant if $\det(A)\ne0$.  Hasselblatt and Propp~\cite{MR2358970}
have shown that the dynamical degree of~$\f_A$ is equal to the
spectral radius of~$A$, i.e., the magnitude of the largest eigenvalue
of~$A$.

The following is a special case of our main theorem and its
corollaries; see Section~\ref{section:monomialmapht0} for details.

\begin{theorem}
\label{theorem:conj134formonmaps}
Conjectures~$\ref{conjecture:setofafPs}$
and~$\ref{conjecture:hzeroeqPPrePer}$ are true for monomial maps. More
precisely, let~$\f_A$ be a monomial map with $\d_{\f_A}>1$. 
\begin{parts}
\Part{(a)}
The arithmetic degrees of~$\f_A$ satisfy
\[
  \bigl\{\a_{\f_A}(P):P\in\GG_m^N(\Qbar)\bigr\}
  \subset \bigl\{\text{eigenvalues of $A$}\bigr\}.
\]
In particular,~$\a_{\f_A}(P)$ is an algebraic integer for all
$P\in\GG_m^N(\Qbar)$.
\Part{(b)}
Let~$P\in\GG_m^N(\Qbar)$ be a point with
$\hhat_{\f_A}(P)=0$. Then~$\Orbit_\f(P)$ is contained in a proper
\textup(possibly disconnected\textup) algebraic subgroup of $\GG_m^N$.
\Part{(c)}
If the matrix $A$ is diagonalizable over~$\CC$, then
\[
  \hhat_\f(P)=0\quad\Longleftrightarrow\quad\a_\f(P)<\d_\f.
\]
\Part{(d)}
If the characteristic polynomial of
the matrix $A$ is irreducible over $\QQ$, then
\[
  \hhat_\f(P)=0\quad\Longleftrightarrow\quad 
  \text{$\Orbit_{\f_A}(P)$ is finite}.
\]
\end{parts}
\end{theorem}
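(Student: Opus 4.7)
The plan is to linearize the problem via logarithms. For $P = (x_1,\ldots,x_N) \in \GG_m^N(K)$ with $K \subset \Qbar$ a number field and $v$ a place of $K$, set
\[
  \bfv_{P,v} = \bigl(\log|x_1|_v, \ldots, \log|x_N|_v\bigr) \in \RR^N,
\]
which vanishes for all but finitely many $v$. Since each coordinate of $\f_A$ is a monomial in the $x_i$, one has $\bfv_{\f_A^n(P),v} = A^n \bfv_{P,v}$, and the standard formula for the Weil height gives
\[
  h\bigl(\f_A^n(P)\bigr) = \frac{1}{[K:\QQ]} \sum_v [K_v:\QQ_v] \max\bigl(0, (A^n\bfv_{P,v})_1, \ldots, (A^n\bfv_{P,v})_N\bigr).
\]
The arithmetic of iterates of $\f_A$ is thereby reduced to the linear-algebraic behavior of $A^n$ acting on the log-vectors $\bfv_{P,v}$.

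For part (a), the Jordan normal form of $A$ over $\CC$ shows that $\|A^n \bfv\|$ grows like $c |\lambda|^n n^k$, where $\lambda$ is the eigenvalue of $A$ of largest absolute value with which $\bfv$ has nontrivial overlap and $k+1$ is the size of the relevant Jordan block. Applied place by place and summed, this gives $\a_{\f_A}(P) = \max_v |\lambda_v|$ for a certain eigenvalue $\lambda_v$ of $A$ associated to each $v$. Since $A$ is integral, each $\lambda_v$ is an algebraic integer, and so is its absolute value $|\lambda_v| = \sqrt{\lambda_v \bar\lambda_v}$, establishing (a).

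For parts (b)--(d), the key is to translate the hypothesis $\hhat_{\f_A}(P) = 0$ into a geometric constraint on the log-vectors. Positivity of the summand $\max(0, \cdot)$ combined with the dominant growth rate $\|A^n \bfv\| \asymp n^{\ell_{\f_A}} \d_{\f_A}^n$ (realized by any $\bfv$ with nontrivial top-eigenvalue component) forces, at every place $v$, the vector $\bfv_{P,v}$ to lie in the sum $W$ of generalized eigenspaces of $A$ with strictly sub-dominant growth. Because $A$ is defined over $\QQ$ and $W$ is Galois-stable, $W$ is cut out by a $\QQ$-rational system of linear equations, so there exists a nonzero integer vector $\mathbf{m}$ with $\mathbf{m} \cdot \bfv_{P,v} = 0$ for every $v$. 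This is equivalent to $\prod_i x_i^{m_i}$ being a root of unity, i.e., $P$ lies in a proper algebraic subgroup of $\GG_m^N$, proving (b). Parts (c) and (d) follow by sharpening the same analysis: if $A$ is diagonalizable, the polynomial correction disappears and the sub-dominant condition becomes precisely $\a_{\f_A}(P) < \d_{\f_A}$, which combined with the elementary bound $\hhat_\f(P) > 0 \Rightarrow \a_\f(P) = \d_\f$ from Proposition~\ref{proposition:canhtprops} yields (c); if the characteristic polynomial of $A$ is irreducible over $\QQ$, then the only proper $A$-invariant $\QQ$-subspace is $\{0\}$, so $W = \{0\}$ forces $\bfv_{P,v} = 0$ at every place, whence every $x_i$ is a root of unity by Kronecker's theorem and $P$ is preperiodic.

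The main obstacle is the local-to-global step ``$\hhat_{\f_A}(P) = 0 \Rightarrow \bfv_{P,v} \in W$ for every $v$'': contributions from different places could a priori conspire through the $\max(0,\cdot)$ operation, so one must exploit positivity of each summand and the fact that top-growth contributions add rather than cancel across places, then extract a $\QQ$-rational linear relation from the Galois-stable invariant subspace structure. This analysis is the content of Theorem~\ref{theorem:linrelsonlogP}, the main theorem of the paper, from which parts (a)--(d) follow as corollaries.
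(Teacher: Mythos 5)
Your reduction to the linear action of $A^n$ on the log-vectors, and the use of positivity plus the product formula to force the log-vectors into the kernel of the limiting matrix, is exactly the paper's strategy. But there is a genuine gap at the descent step, and it is precisely the point where the paper has to invoke Baker's theorem on linear forms in logarithms. You assert that the sub-dominant space $W$ ``is Galois-stable'' because $A$ is defined over $\QQ$, and hence is cut out by $\QQ$-rational equations. This is false: Galois conjugation permutes the generalized eigenspaces according to its action on the eigenvalues, and it does \emph{not} preserve absolute values of eigenvalues. For example, if $\det(T-A)=T^2-3T+1$, the dominant eigenvalue $(3+\sqrt5)/2$ is conjugate to the sub-dominant one $(3-\sqrt5)/2$, so $W$ is a line in $\Qbar^2$ that is not defined over $\QQ$, and no nonzero integer vector annihilates it. Knowing only that $\log\|P\|_v$ lies in a proper $\Qbar$-subspace gives you $\Qbar$-linear relations among the numbers $\log\|x_i\|_v$; to extract an \emph{integer} relation (and hence an algebraic subgroup of $\GG_m^N$) one must know that the full space $\Perp_\Qbar\bigl(\log\|P\|_v\bigr)$ of $\Qbar$-relations is itself Galois-stable, so that it contains the sum of all Galois conjugates of $\Perp_\Qbar(W)$, which then descends to $\QQ$. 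For archimedean $v$ this Galois-stability is a transcendence statement about logarithms of algebraic numbers --- it is Theorem~\ref{theorem:baker} --- and no purely linear-algebraic argument can supply it. Since parts (b), (d), and the paper's route to (a) and (c) all pass through this descent, the gap affects the whole proof, not just part (b).

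A secondary error: for non-diagonalizable $A$ the constraint space is not ``the sum of generalized eigenspaces with strictly sub-dominant growth.'' The correct space is $\Ker_\CC(B)$ for $B=\lim n^{-\ell}\rho^{-n}A^n$, which also contains, inside each maximal Jordan subspace $V_i$, the $A$-invariant codimension-one piece $W_i=\Ker\bigl((A-\l)^{\ell}|_{V_i}\bigr)$. The paper's own example $\f(x,y)=(x^dy,\,y^d)$ with $P=(2,1)$ shows your stronger claim fails: here every eigenvalue is dominant, so your $W$ would be $\{0\}$ and would force $P$ to be torsion, yet $\hhat_\f(P)=0$ while $P$ is not preperiodic (and indeed $\a_\f(P)=\d_\f$, which is also why the forward implication of (c) genuinely requires diagonalizability). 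Finally, note that part (b) asserts the whole orbit lies in a proper algebraic \emph{subgroup}, not merely in the divisible hull of one; passing from ``$\prod x_i^{m_i}$ is a root of unity'' to an honest subgroup containing $\Orbit_\f(P)$ requires the additional argument in the proof of Corollary~\ref{corollary:ht0orbitnotdense} bounding the roots of unity lying in $\QQ(P)$.
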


Theorem~\ref{theorem:conj134formonmaps} is proven in
Section~\ref{section:proofofcorollaries} as a series of corollaries to
Theorem~\ref{theorem:linrelsonlogP}, which is our main result.  The
proof of Theorem~\ref{theorem:linrelsonlogP} uses a compactness
argument, the product formula, Baker's theorem on
linear-forms-in-logarithms, and a lot of linear algebra.  In
particular, Baker's theorem is needed to show
the~$\Gal(\Qbar/\QQ)$-invariance of the set of~$\Qbar$-linear
relations on a set of log absolute values
\[
  \log\|x_1\|_v,\ldots,\log\|x_N\|_v,
\]
where the~$x_i$ are in~$\Qbar$ and~$v\in M_\Qbar$ is an (archimedean)
absolute value on~$\Qbar$.

We conclude the paper with two additional results.  In
Section~\ref{section:regaffaut} we use Kawagu\-chi's theory of
canonical heights for regular affine automorphisms to prove our
conjectures for maps of this type, and in
Section~\ref{section:generalizations} we generalize
Conjecture~\ref{conjecture:setofafPs} to dominant rational self-maps
of arbitrary (nonsingular) varieties and prove that it is true for
automorphisms of certain~K3 surfaces.

\begin{addendum}
While this paper was under review, a number of authors have written
papers that grew out of the questions raised and results proven in
this paper. We mention in particular a paper of Jonsson and
Wulcan~\cite{arxiv1202.0203} in which they prove much of
Conjectures~\ref{conjecture:setofafPs}
and~\ref{conjecture:hzeroeqPPrePer} for polynomial
morphisms~$\f:\AA^2\to\AA^2$ of small topological degree, and a paper
of Kawaguchi and the author~\cite{kawsilv:arithdegledyndeg} in which
it is shown that $\a_\f(P)\le\d_\f$ holds for dominant rational
self-maps of (normal) varieties.
\end{addendum}

%%%%%%%%%%%%%%%%%%%%%%%%%%%%%%%%%%%%%%%%%%%%%%%%%%%%%%%%%%%%%%%%%%%%%%
\ifArXivVersion
\noindent\emph{Remark.}\enspace
In the ArXiv version of this article, for the convenience of the
reader we have an included an appendix giving further details about
various elemenatry remarks and assertions. The appendix will not
appear in the published version.  
\fi
%%%%%%%%%%%%%%%%%%%%%%%%%%%%%%%%%%%%%%%%%%%%%%%%%%%%%%%%%%%%%%%%%%%%%%

\begin{acknowledgement}
The author thanks Charles Favre, Mattias Jonsson, Shu Kawaguchi,
Jan-Li Lin, James Propp, Juan Rivera-Letelier, and Tom Ward for their
helpful comments on the initial draft. The author also
thanks Mattias Jonsson for pointing out that the converse
to~\eqref{eqn:hfPgt0impafPeqdf} does not hold in general, Jan-Li Lin
for showing the author the short proof of Lemma~\ref{lemma:fAPeqfBP}
(which improved the original proof that worked only over~$\Qbar$), and
the referee for his careful reading of the manuscript and his/her many
suggestions, including especially a simplification and generalization
of the proof of Theorem~\ref{theorem:linrelsonlogP} that eliminated
the assumption that the matrix~$A$ be diagonalizable.
\end{acknowledgement}

%%%%%%%%%%%%%%%%%%%%%%%%%%%%%%%%%%%%%%%%%%%%%%%%%%%%%%%%%%%%%%%%%%%%%%
\section{Relation to earlier work}
\label{section:earlierwork}
%%%%%%%%%%%%%%%%%%%%%%%%%%%%%%%%%%%%%%%%%%%%%%%%%%%%%%%%%%%%%%%%%%%%%%

%%%%%%%%%%%%%%%%%%%%%%%%%%%%%%%%%%%%%%%%%%%%%%%%%%%%%%%%%%%%%%%%%%%%%%
\subsection{Growth rate of $\protect{\boldsymbol{\deg(\f^n)}}$}
\label{subsection:ellconj}
%%%%%%%%%%%%%%%%%%%%%%%%%%%%%%%%%%%%%%%%%%%%%%%%%%%%%%%%%%%%%%%%%%%%%%
Conjecture~\ref{conjecture:dyndegtimespoly} is related to questions
raised by Hasselblatt and Propp~\cite{MR2358970}.  In particular, they
ask~\cite[Question~9.5]{MR2358970} if the degree sequence~$\deg(\f^n)$
can be simultaneously subexponential and
superpolynomial. Conjecture~\ref{conjecture:dyndegtimespoly} says that
this cannot happen. They further ask~\cite[Question~9.6]{MR2358970}
if, whenever~$\deg(\f^n)$ is bounded by a power of~$n$, must it grow
essentially like~$n^\ell$ for a non-negative integer~$\ell$.
Conjecture~\ref{conjecture:dyndegtimespoly} says that this is true, so
for example a growth rate of order~$\sqrt{n}$ should not be
possible.
\par
The classification results of Diller and Favre~\cite{MR1867314} can be
used to show that Conjecture~\ref{conjecture:dyndegtimespoly} is true
for birational maps of~$\PP^2$ having~$\d_\f=1$. See also~\cite{MR2428100}
for families of birational maps on~$\PP^2$ having~$\d_\f=1$
and~$\ell_\f=2$, which shows that~$\ell_\f$ may be as large as the
dimension.
Lin~\cite{arxiv1007.0253} and Jonsson and
Wulcan~\cite{arXiv:1001.3938} have shown that a strong form of
Conjecture~\ref{conjecture:dyndegtimespoly} holds for monomial maps;
see Theorem~\ref{theorem:ellinvariantmonomap}.  See
also~\cite{arxiv0608267,MR2339287,arxiv0711.2770} for a proof that
Conjecture~\ref{conjecture:dyndegtimespoly} holds for certain
rational maps of~$\PP^2$, including in particular all polynomial maps
of~$\AA^2$.
\par
In general, it is very difficult to compute, or even to
estimate, the value of the dynamical degree of a rational map in
dimension greater than~$2$, since even on a computer one generally
cannot compute the map~$\f^n$ for moderate values
of~$n$. See~\cite{MR2220002,MR2111418,arxiv0512507} for some
discussion of these issues and for the computation of~$\d_\f$ for
certain higher-di\-men\-sional maps.

%%%%%%%%%%%%%%%%%%%%%%%%%%%%%%%%%%%%%%%%%%%%%%%%%%%%%%%%%%%%%%%%%%%%%%
\subsection{Canonical heights for regular affine automorphisms}
\label{subsection:regaffauts}
%%%%%%%%%%%%%%%%%%%%%%%%%%%%%%%%%%%%%%%%%%%%%%%%%%%%%%%%%%%%%%%%%%%%%%
\leavevmode\newline
The theory of canonical heights for \emph{morphisms} of~$\PP^N$ is
well known and may be developed exactly as was done by N\'eron and
Tate in their theory of canonical heights on abelian varieties; see
for example~\cite{callsilv:htonvariety} or~\cite[\S3.4]{MR2316407}.  A
regular affine automorphism~\cite{sibony:panoramas} is an automorphism
$\f:\AA^N\to\AA^N$ whose extension to a rational map
$\f:\PP^N\dashrightarrow\PP^N$ satisfies $Z(\f)\cap
Z(\f^{-1})=\emptyset$.  Regular affine automorphisms are algebraically
stable, i.e., $\d_\f=\deg(\f)$;
see~\cite[Chapter~2]{sibony:panoramas}. Shu Kawaguchi has developed a
theory of canonical heights for such maps.  Kawaguchi's construction
is described in~\cite{arxiv0405007}
and~\cite[Exercises~7.17--7.22]{MR2316407}, and the subtle height
inequality needed to justify the construction is given
in~\cite{arxiv0909.3573} and~\cite{arxiv0909.3107}.  There is thus a
satisfactory theory of canonical heights for regular affine
automorphisms, and the present article may be viewed as a first step
towards establishing an analogous theory for general dominant rational
maps.

%%%%%%%%%%%%%%%%%%%%%%%%%%%%%%%%%%%%%%%%%%%%%%%%%%%%%%%%%%%%%%%%%%%%%%
\subsection{The dynamical Manin--Mumford conjecture}
\label{subsection:dynmanmum}
%%%%%%%%%%%%%%%%%%%%%%%%%%%%%%%%%%%%%%%%%%%%%%%%%%%%%%%%%%%%%%%%%%%%%%
The relationship between preperiodic points and canonical heights, and
in particular Conjecture~\ref{conjecture:hzeroeqPPrePer}, may have
some bearing on the not-yet-pre\-cisely-for\-mu\-lated dynamical
Manin--Mumford conjecture. A naive conjecture, modeled after Zhang's
conjecture for polarized morphisms, might say the following: Let
$\f:\PP^N\dashrightarrow\PP^N$ be a dominant rational map with
$\d_\f>1$, and let~$X\subset\PP^N$ be an irreducible subvariety.  If
$\PrePer(\f)\cap X$ is Zariski dense in~$X$, then~$X$ is preperiodic.
This naive statement is clearly false.  For example,
let~$\f:\PP^2\dashrightarrow\PP^2$ be
$\f\bigl([x,y,z]\bigr)=[x^2z,y^3,z^3]$ and take $X=\{x=y\}$. But some
carefully formulated dynamical Manin--Mumford statements have been
proven; see for example~\cite{arxiv0805.1560,arxiv0705.1954}.  Our
hope is that the existence of a canonical height characterizing
preperiodic points as being exactly those points having height zero
might be a helpful tool for proving Manin--Mumford type results for
more general maps.

%%%%%%%%%%%%%%%%%%%%%%%%%%%%%%%%%%%%%%%%%%%%%%%%%%%%%%%%%%%%%%%%%%%%%%
\subsection{Integrability and arithmetic entropy}
\label{subsection:integrability}
%%%%%%%%%%%%%%%%%%%%%%%%%%%%%%%%%%%%%%%%%%%%%%%%%%%%%%%%%%%%%%%%%%%%%%

The relationship between the degree growth of iterates of a rational
map and the existence of invariant fibrations or more general
geometric invariant structures is an area of intense activity in both
the mathematical and the physics literatures.  When an invariant
structure of a specified type exists, one often says that the map is
integrable, although there is not yet a precise general definition of
integrability.  We refer the reader to~\cite{MR2556644} for a survey
on integrability of discrete dynamical systems and for some
(heuristic) methods of detecting integrability, including studying the
cycle structure of the reduction of~$\f$ acting on~$\PP^N(\FF_q)$ for
varying finite fields~$\FF_q$~\cite{MR2017601,MR2164737,MR2525820} and
studying the growth rate of~$h\bigl(\f^n(P)\bigr)$ for rational or
algebraic points~$P$~\cite{MR1732080,MR2131425,MR2212057}.  In
particular, Halburd~\cite{MR2131425} defines a map~$\f$ to be
\emph{Diophantine integrable} if~$h\bigl(\f^n(P)\bigr)$ grows no
faster than polynomially in~$n$ for all rational (or all algebraic)
points~$P$. In our terminology, such orbits have arithmetic
degree~$1$, equivalently, arithmetic entropy~$0$.  We also mention
Buium's beautiful arithmetic characterization~\cite{MR2171197} of
Latt\`es maps (one-dimensional integrable maps) in terms of their
mod~$p$ reductions.

%%%%%%%%%%%%%%%%%%%%%%%%%%%%%%%%%%%%%%%%%%%%%%%%%%%%%%%%%%%%%%%%%%%%%%
\subsection{Another type of algebraic entropy}
\label{subsection:otheralgent}
%%%%%%%%%%%%%%%%%%%%%%%%%%%%%%%%%%%%%%%%%%%%%%%%%%%%%%%%%%%%%%%%%%%%%%
There is another notion of algebraic entropy defined for self-maps of
topological groups with various additional structures, e.g., for
locally compact abelian groups. See for example the
papers~\cite{MR0175106,arxiv1111.1287,MR2491886,MR637984,MR540634,virili11}.
In particular, the paper~\cite{arxiv1111.1287} shows that the
algebraic entropy of an endomorphism of a finite-dimensional rational
vector space is the Mahler measure of the characteristic polynomial of
the associated matrix, which is similar to results of
Hasselblatt--Propp~\cite{MR2358970} and
Lin~\cite{arxiv1010.6285,arxiv1007.0253}.

%%%%%%%%%%%%%%%%%%%%%%%%%%%%%%%%%%%%%%%%%%%%%%%%%%%%%%%%%%%%%%%%%%%%%%
\section{The dynamical degree of a rational map}
\label{section:dyndeg}
%%%%%%%%%%%%%%%%%%%%%%%%%%%%%%%%%%%%%%%%%%%%%%%%%%%%%%%%%%%%%%%%%%%%%%
Let $\f:\PP^N\dashrightarrow\PP^N$ be a dominant rational map of
degree~$d$. If~$\f$ is a morphism, then the degree of~$\f^n$ is
simply~$d^n$, but in general the degree of~$\f^n$ may be strictly
smaller than~$d^n$. The sequence of degrees~$(\deg\f^n)_{n\ge1}$
is both interesting and often surprisingly difficult to analyze.

\begin{definition}
Let $\f:\PP^N\dashrightarrow\PP^N$ be a dominant rational map.
The (first) \emph{dynamical degree of~$\f$} is the quantity
\[
  \d_\f = \lim_{n\to\infty} (\deg\f^n)^{1/n}.
\]
\end{definition}

\begin{example}
\label{example:Fibondegmap2}
The iterates of the map
\[
\f\bigl([x,y,z]\bigr)=[yz,xy,z^2] 
\]
are easily computed to be
\[
  \f^n\bigl([x,y,z]\bigr)
  = \left[x^{F_{n-1}}y^{F_n}z^{F_n},x^{F_n}y^{F_{n+1}},z^{F_{n+2}}\right],
\]
where $F_n$ is the $n$'th Fibonacci number. Hence
\[
  \d_\f = \lim_{n\to\infty} F_{n+2}^{1/n} = \frac{1+\sqrt5}{2}
\]
is the golden ratio.  We remark that~$\f$ is birational and regular,
i.e., satisfies $Z(\f)\cap Z(\f^{-1})=\emptyset$, but it is not
an affine automorphism.
\end{example}

\begin{example}
\label{example:Fibondegmap}
The map
\[
  \f:\AA^3\longrightarrow\AA^3,\qquad
  \f(x,y,z)=(y,z,x+yz),
\]
is an affine automorphism, but it is not regular, since $Z(\f)\cap
Z(\f^{-1})$ is a line.  An easy induction shows that $\deg(\f^n) = F_n$,
so this map also has $\d_\f = \frac{1+\sqrt{5}}{2}$.
\end{example}

For the convenience of the reader, we recall the proof of the
following well-known properties of the dynamical degree.

\begin{proposition}
\label{proposition:df=inf}
The limit defining the dynamical degree exists and satisfies
\[
  \d_\f = \inf_{n\ge1} (\deg\f^n)^{1/n}.
\]
\end{proposition}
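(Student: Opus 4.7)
The plan is to establish submultiplicativity of the degree sequence and then invoke Fekete's lemma.

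First I would verify the submultiplicative inequality
\[
  \deg(\f^{m+n}) \le \deg(\f^m)\cdot\deg(\f^n) \qquad\text{for all } m,n\ge1.
\]
To see this, choose homogeneous representatives
\[
  \f^m = [F_0,\dots,F_N], \qquad \f^n = [G_0,\dots,G_N],
\]
with $\deg F_i = \deg(\f^m)$ and $\deg G_j = \deg(\f^n)$, each tuple having no common nontrivial factor. Then the composition is represented by
\[
  \f^{m+n} = \bigl[F_0(G_0,\dots,G_N),\dots,F_N(G_0,\dots,G_N)\bigr],
\]
and each component is homogeneous of degree $\deg(\f^m)\cdot\deg(\f^n)$. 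Removing any common factor of these components can only decrease the degree, yielding the stated inequality. Dominance of $\f$ guarantees that the composed tuple is not identically zero, so the construction is legitimate.

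Next I would apply Fekete's subadditivity lemma to the sequence
\[
  b_n = \log\deg(\f^n),
\]
which satisfies $b_{m+n}\le b_m+b_n$ by the step above. Fekete's lemma gives
\[
  \lim_{n\to\infty}\frac{b_n}{n} = \inf_{n\ge1}\frac{b_n}{n},
\]
where both sides lie in $[-\infty,\infty)$; since $\deg(\f^n)\ge1$ we in fact have $b_n\ge0$, so the common value lies in $[0,\infty)$. Exponentiating yields
\[
  \lim_{n\to\infty}(\deg\f^n)^{1/n} = \inf_{n\ge1}(\deg\f^n)^{1/n},
\]
which is the asserted identity.

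The only subtlety is the submultiplicativity step; there is no genuine obstacle, but one must be careful to argue that the naive degree bound $\deg(\f^m)\cdot\deg(\f^n)$ persists after removing common factors from the composed tuple, and that dominance prevents this tuple from vanishing identically. Once that is in place, Fekete's lemma finishes the argument immediately.
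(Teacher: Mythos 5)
Your proof is correct and takes essentially the same route as the paper: both establish the submultiplicative inequality $\deg(\f^{m+n})\le\deg(\f^m)\deg(\f^n)$ and then pass to logarithms and use subadditivity. The only cosmetic difference is that you spell out the submultiplicativity step (which the paper asserts as standard) and cite Fekete's lemma by name, whereas the paper states the degree inequality without proof and instead reproduces the short Fekete-type argument inline.
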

\begin{proof}
We note that for any rational maps~$\f,\psi:\PP^N\dashrightarrow\PP^N$, we
have
\begin{equation}
  \label{eqn:degfpsi}
  \deg(\f\circ\psi) \le (\deg\f)(\deg\psi).
\end{equation}
To ease notation, we let
\[
  d_n = \log\deg(\f^n).
\]
We need to prove that the sequence~$d_n/n$ converges and is equal to
its infimum. From~\eqref{eqn:degfpsi} we see that
\[
  d_{i+j} \le d_i + d_j \quad\text{for all $i$ and $j$.}
\]
Fix an integer~$m$ and write $n=mq+r$ with $0\le r<m$. Then
\[
  \frac{d_n}{n}
  =\frac{d_{mq+r}}{n}
  \le \frac{qd_m+d_r}{n}
  = \frac{d_m}{m}\cdot\frac{1}{1+r/mq}+\frac{d_r}{n}
  \le \frac{d_m}{m}+\frac{d_r}{n}.
\]
Now take the limsup as $n\to\infty$, keeping in mind that $m$ is
fixed and $r<m$, so~$d_r$ is bounded. This gives
\[
  \limsup_{n\to\infty} \frac{d_n}{n} \le \frac{d_m}{m}.
\]
Taking the infimum over~$m$ shows that
\[
  \limsup_{n\to\infty} \frac{d_n}{n}
  \le \inf_{m\ge1} \frac{d_m}{m}
  \le \liminf_{m\to\infty} \frac{d_m}{m},
\]
and hence all three quantities must be equal.
\end{proof}

The dynamical degrees in Examples~\ref{example:Fibondegmap2}
and~\ref{example:Fibondegmap} are the golden ratio, which is an
algebraic integer. This is a consequence of the fact that their degree
sequences~$(\deg\f^n)_{n\ge1}$ satisfy a linear recurrence with
constant coefficients.  It turns out that not all degree sequences
satisfy such linear recurrences. For example, it is shown
in~\cite{MR2358970} that the degree sequence for the map
$\f(x,y)=(xy^2,x^{-2}y)$ does not satisfy a linear recurrence with
constant coefficients, although it is still true that~$\d_\f$ is an
algebraic integer for this map.

\begin{conjecture}
\textup{(Bellon--Viallet \cite{MR1704282})}
\index{Bellon, M.P.}%
\index{Viallet, C.-M.}%
Let~$\f:\PP^N\dashrightarrow\PP^N$ be a dominant rational map defined
over~$\CC$. Then its dynamical degree~$\d_\f$ is an algebraic integer.
%% Is there some reason to restrict to being over \CC here?
\end{conjecture}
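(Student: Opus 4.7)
The plan is to try to realize $\d_\f$ as the spectral radius of an integer matrix by finding a good birational model. Concretely, I would look for a smooth projective variety $X$ and a birational morphism $\pi:X\to\PP^N$ such that the induced rational self-map $\tilde\f:X\dashrightarrow X$ is \emph{algebraically stable} in the sense that the pullback on the N\'eron--Severi group satisfies $(\tilde\f^*)^n=(\tilde\f^n)^*$ for all $n\ge1$. If such a model exists, then $\tilde\f^*$ acts as an endomorphism of the finitely generated abelian group $\NS(X)$, hence is represented by an integer matrix $M$. Since the degree sequence can be recovered as an intersection number,
\[
  \deg(\f^n) \;=\; \bigl(\tilde\f^n\bigr)^*H\cdot H^{N-1}
             \;=\; M^n v \cdot w
\]
for fixed classes $v=\pi^*H$ and $w=H^{N-1}$ in the appropriate cohomological degrees, a standard application of the Perron--Frobenius type analysis to the integer matrix $M$ shows that $\d_\f$ equals the spectral radius of $M$, which is a root of the characteristic polynomial of $M$ and hence an algebraic integer.

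Step by step I would proceed as follows. First, fix notation and record that $\deg(\f^n)$ admits an intersection-theoretic description on any smooth model dominating $\PP^N$. Second, reduce the problem to producing an algebraically stable model: this is the point where the algebraic stability condition is used to turn the sequence $(\tilde\f^n)^*H$ into the orbit of a single vector under a fixed linear map. Third, once stability is in hand, observe that the spectral radius $\rho(M)$ of an integer matrix is an algebraic integer (its minimal polynomial divides the characteristic polynomial of $M$, which is monic with integer coefficients), and use the inequalities $c_1\rho(M)^n \le M^n v\cdot w \le c_2 n^{N-1}\rho(M)^n$, valid when $v$ has nonzero component along the leading Jordan block, to conclude $\d_\f=\rho(M)$.

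The hard part, and the reason the conjecture is still open in full generality, will be Step~2: the existence of an algebraically stable birational model. In dimension two this is achievable by the Diller--Favre construction, which iteratively blows up the indeterminacy locus and controls the defect $(\tilde\f^*)^n-(\tilde\f^n)^*$ using negative self-intersections of exceptional divisors; such an argument uses the Hodge index theorem in an essential way and does not extend cleanly to higher dimension. In dimension at least three, examples of Favre show that the naive blow-up process need not terminate, so one either has to allow a more general class of models (e.g., work in a projective limit of models, or use the b-divisor formalism of Boucksom--Favre--Jonsson) or find a direct cohomological substitute. A realistic refinement of the plan would be to (a) prove the conjecture for birational maps using the surface case plus a Shokurov--type descent, and (b) treat dominant (non-birational) maps by passing to the graph in $\PP^N\times\PP^N$, resolving its indeterminacies, and hoping that pulling back along the projections gives a stable action.

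An alternative that I would explore in parallel, and which may give partial progress where the model-theoretic approach stalls, is to exploit Proposition~\ref{proposition:df=inf} together with a subadditivity structure on a suitable monoid of classes. If one can show that $\deg(\f^n)$ satisfies an asymptotic linear recurrence with integer coefficients (as in Examples~\ref{example:Fibondegmap2} and~\ref{example:Fibondegmap}), the characteristic root argument again gives that $\d_\f$ is an algebraic integer; but as noted in the excerpt, Hasselblatt--Propp have shown such recurrences can fail, so this route must be refined by allowing polynomial-times-exponential terms, which is essentially what Conjecture~\ref{conjecture:dyndegtimespoly} predicts. Thus my expectation is that Conjecture~\ref{conjecture:dyndegtimespoly} and the Bellon--Viallet conjecture should be attacked in tandem, with the integrality of $\d_\f$ and the integer exponent $\ell_\f$ both arising from a single Jordan-form analysis of an integer operator on a natural finitely generated $\ZZ$-module associated to $\f$.
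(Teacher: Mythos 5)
The statement you were asked about is a \emph{conjecture} (the Bellon--Viallet conjecture), which the paper merely records and attributes; it does not prove it, and indeed the conjecture remains open in general. So there is no ``paper's own proof'' to compare against. Your proposal honestly recognizes this: it is not a proof but a strategy sketch, and as such it is accurate and well-informed. You correctly identify the central idea (find an algebraically stable birational model so that $\d_\f$ becomes the spectral radius of an integer matrix acting on $\NS(X)$), correctly locate the obstruction (no such model need exist in dimension $\ge 3$, by examples of Favre, so the Diller--Favre surface argument does not extend), and correctly connect the question to Conjecture~\ref{conjecture:dyndegtimespoly}, which is exactly the perspective the paper takes in Section~\ref{subsection:ellconj}.

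Two small cautions on the details you give. First, even in the stable case the equality $\deg(\f^n)=(\tilde\f^n)^*H\cdot H^{N-1}$ requires care with the choice of $H$ on $X$ versus $\PP^N$, and the Perron--Frobenius-type bound $c_1\rho(M)^n\le M^n v\cdot w$ can fail if the vector $v$ happens to lie in a proper $M$-invariant subspace missing the dominant Jordan block; one must argue (using ampleness/nefness and the fact that $\tilde\f^*$ preserves the pseudoeffective cone) that this degenerate situation cannot occur. Second, for non-birational dominant maps the graph-resolution approach you mention does not by itself give $(\tilde\f^*)^n=(\tilde\f^n)^*$; the functoriality failure of pullback under composition of rational maps is precisely what algebraic stability is designed to repair, and it is a hypothesis, not a consequence of resolving the graph. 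None of this undermines your assessment: you correctly flag these as the hard points rather than claiming to have solved them.
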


\begin{example}
\label{example:ellmanyvalues}
If $\d_\f>1$, then $\deg(\f^n)$ grows roughly like $\d_\f^n$. One might
ask if the growth rates are the same, but it can happen that the ratio
$(\deg\f^n)/\d_\f^n$ grows like a power of~$n$.  For example, 
let~$d\ge2$ be an integer, and
let~$\f:\PP^N\dashrightarrow\PP^N$ be the dominant rational map given
in affine coordinates by
\[
  \f = \left( X_1^d X_2, X_2^d X_3, X_3^d X_4, \cdots, 
              X_{N-1}^d X_N, X_N^d  \right).
\]
(This is an example of a monomial map;
see Section~\ref{section:monomialmaps}.) It is easy to prove that
\[
  \deg(\f^n) = d^n + nd^{n-1} + \binom{n}{2}d^{n-2}
          + \cdots + \binom{n}{N-1}d^{n-N+1}.
\]
Thus
\[
  \lim_{n\to\infty} \frac{\deg(\f^n)}{d^n\cdot n^{N-1}} = \frac{1}{(N-1)!d^{N-1}},
\]
so Conjecture~\ref{conjecture:dyndegtimespoly} is true for this map
with $\d_\f=d$ and $\ell_\f=N-1$.  Trivial modifications of this
example give maps with $\d_\f=d$ and with~$\ell_\f$ equal to any
integer between~$0$ and~$N-1$.
\end{example}

\begin{remark}
We remark that more generally, a dominant rational map
$\f:\PP^N\dashrightarrow\PP^N$ has~$N$ different associated dynamical
degrees corresponding to its action on linear subspaces of various
dimensions.  Thus the \emph{$k^{\textup{th}}$ dynamical degree} of~$\f$ is
the quantity
\[
  \d_{k,\f} = \limsup_{n\to\infty} \bigl(\deg(\f^n)^*L\bigr)^{1/n},
\]
where~$L\subset\PP^N$ is a generic linear subvariety of
codimension~$k$. These dynamical degrees were introduced
in~\cite{MR1488341}, and are computed for monomial maps
in~\cite{arxiv1011.2854,arxiv1010.6285}.
\end{remark}

For further material on the dynamical degree, see for example
\cite{MR1139553,
MR2111418,
MR2428100,
MR1704282,
MR1877828,
MR1721516,
MR1867314,
MR2180409,
MR2339287,
arxiv1011.2854,
MR2358970,
arxiv1007.0253,
arxiv1010.6285, 
MR1836434,
MR2273670,
MR1488341,
MR2497321}

%%%%%%%%%%%%%%%%%%%%%%%%%%%%%%%%%%%%%%%%%%%%%%%%%%%%%%%%%%%%%%%%%%%%%%
\section{Arithmetic degree}
\label{section:arithmeticdegree}
%%%%%%%%%%%%%%%%%%%%%%%%%%%%%%%%%%%%%%%%%%%%%%%%%%%%%%%%%%%%%%%%%%%%%%

If $\f:\PP^N\to\PP^N$ is a morphism of degree~$d\ge2$ defined
over~$\Qbar$ and $P\notin\PrePer(\f)$, then $h\bigl(\f^n(P)\bigr)$
grows like a multiple of~$d^n$, so in particular
$h\bigl(\f^n(P)\bigr)^{1/n}\to d$ as $n\to\infty$. If~$\f$ is a
rational map, but not a morphism, then $h\bigl(\f^n(P)\bigr)$ may grow
more slowly than~$d^n$, which suggests (by analogy with dynamical
degree) the following definition.

\begin{definition}
Let $\f:\PP^N\dashrightarrow\PP^N$ be a dominant rational map defined
over~$\Qbar$, and let~$P\in\PP^N(\Qbar)_\f$.  The \emph{arithmetic
  degree of~$\f$ at~$P$} is the quantity
\[
  \a_\f(P) = 
      \limsup_{n\to\infty} h\bigl(\f^n(P)\bigr)^{1/n}.
\]
(If $h\bigl(\f^n(P)\bigr)=0$ for all sufficiently large~$n$, which can
only happen if~$P\in\PrePer(\f)$, then by convention we 
set~$\a_\f(P)=1$.) 
\end{definition}

\begin{example}
Consider the map given in affine coordinates by
\[
  \f(x,y,z)=(xy,y,z^2).
\]
Then
\[
  \f^n(x,y,z) = \bigl( xy^{n},y,z^{2^n} \bigr),
\]
so $\d_\f=2$, since the $z$-coordinate 
dominates the degree of~$\f^n$. However, if we consider a
point of the form $P=(x,y,\z)$ with $\z$ a root of unity, then
\[
  h\bigl(\f^n(x,y,\z)\bigr) \le h(x)+nh(y),
\]
so $\a_\f(P)=1$. Thus $\a_\f(P)=1$ on a countable union of
two-dimen\-sional hyperplanes of~$\PP^3$. These hyperplanes are
preperiodic for~$\f$.
\end{example}

We now show that $\a_\f(P)\le\d_\f$.

\begin{proposition}
\label{proposition:hfnPledfnhP}
Let $\f:\PP^N\dashrightarrow\PP^N$ be a dominant rational map of
degree~$d\ge2$ defined over~$\Qbar$, and let~$P\in\PP^N(\Qbar)_\f$.
Then
\[
  \a_\f(P) \le \d_\f.
\]
\end{proposition}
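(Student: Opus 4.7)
The plan is to exploit functoriality of heights under rational maps together with the multiplicative structure of iterates, and then take the infimum in Proposition~\ref{proposition:df=inf}.

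First I would recall the elementary height inequality: for any rational map $\psi:\PP^N\dashrightarrow\PP^N$ of degree $D$, there is a constant $C(\psi)$ such that
\[
  h\bigl(\psi(Q)\bigr) \le D\cdot h(Q) + C(\psi)
\]
for every point $Q\in\PP^N(\Qbar)$ at which $\psi$ is defined. This is the standard triangle-inequality estimate coming from writing $\psi$ as homogeneous polynomials and bounding the heights of the coordinate values.

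Next, fix an integer $m\ge 1$, set $D_m=\deg(\f^m)$ and $C_m=C(\f^m)$, and write $n=qm+r$ with $0\le r<m$. Since $P\in\PP^N(\Qbar)_\f$, the entire forward orbit of $P$ avoids $Z(\f)$, and consequently $\f^m$ is defined at $\f^{r+jm}(P)$ for every $j\ge 0$. Iterating the inequality above $q$ times with $\psi=\f^m$ applied to $\f^r(P)$, I get
\[
  h\bigl(\f^n(P)\bigr)
  \le D_m^{q}\, h\bigl(\f^r(P)\bigr) + C_m\bigl(1+D_m+\cdots+D_m^{q-1}\bigr)
  \le D_m^{q}\Bigl(h\bigl(\f^r(P)\bigr) + qC_m + C_m\Bigr),
\]
where the last bound is valid uniformly whether $D_m=1$ or $D_m\ge 2$. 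The quantity in parentheses grows at most linearly in $n$, hence is negligible under $n$-th roots.

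Taking $n$-th roots and letting $n\to\infty$ with $m$ fixed, the ratio $q/n\to 1/m$ and $h(\f^r(P))$ stays bounded (only finitely many residues $r$), so
\[
  \a_\f(P) \;=\; \limsup_{n\to\infty} h\bigl(\f^n(P)\bigr)^{1/n}
         \;\le\; D_m^{1/m} \;=\; \bigl(\deg\f^m\bigr)^{1/m}.
\]
Since $m$ was arbitrary, taking the infimum over $m\ge 1$ and invoking Proposition~\ref{proposition:df=inf} gives $\a_\f(P)\le \d_\f$. The only subtlety, and the main thing to verify carefully, is that the orbit hypothesis $\Orbit_\f(P)\cap Z(\f)=\emptyset$ really does ensure that $\f^m$ is defined at each iterate $\f^{r+jm}(P)$ so that the height inequality can be legitimately applied along the entire chain; once this is checked, the argument is purely formal.
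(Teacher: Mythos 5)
Your proposal is correct and follows essentially the same route as the paper: iterate the triangle-inequality height bound for the reduced map $\f^m$, take $n$-th roots, and pass to the infimum over $m$ via Proposition~\ref{proposition:df=inf}. The only cosmetic difference is bookkeeping — you split $n=qm+r$ directly, whereas the paper first shows $\a_\f(P)$ can be computed along the subsequence $(nk)_{n\ge1}$ — and you correctly flag the one point needing care, namely that the orbit avoiding $Z(\f)$ guarantees each iterate avoids $Z(\f^m)$ so the height inequality with degree $\deg(\f^m)$ applies along the whole chain.
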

\begin{proof}
A standard triangle inequality estimate says that 
\begin{equation}
  \label{eqn:hfQledhQ}
  h\bigl(\f(Q)\bigr) \le d h(Q) + O_\f(1).
\end{equation}
\par
We start with a telescoping sum inequality computation.
\begin{align*}
  h\bigl(\f^n(P)\bigr) - d^nh(P)
  &= \sum_{i=1}^n d^{n-i}\left[h\bigl(\f^i(P)\bigr)
          - d h\bigl(\f^{i-1}(P)\bigr) \right] \\
  &\le \sum_{i=1}^n d^{n-i}\sup_{Q\in\PP^N(\Qbar)}\left\{h\bigl(\f(Q)\bigr)
           - d h(Q) \right\} \\
  &\le \sum_{i=1}^n d^{n-i} O_\f(1)
   \quad\text{from \eqref{eqn:hfQledhQ},} \\
  &= O_\f(d^n).
\end{align*}
Thus there is a constant $C(\f,P)$, depending as indicated on~$\f$
and~$P$, such that
\begin{equation}
  \label{eqn:hfnPdnhPOfdn}
  h\bigl(\f^n(P)\bigr) \le C(\f,P)(\deg\f)^n
  \quad\text{for all $n\ge1$.}
\end{equation}
\par
For each integer $k\ge1$, we write $d_k=\deg(\f^k)$.  
Applying~\eqref{eqn:hfnPdnhPOfdn} to the map~$\f^k$ yields
\[
  h\bigl(\f^{nk}(P)\bigr) \le C(\f^k,P)(\deg\f^k)^n.
\]
Hence
\begin{equation}
  \label{eqn:limsuphfnkP}
  \limsup_{n\to\infty} h\bigl(\f^{nk}(P)\bigr)^{1/nk}
  \le \limsup_{n\to\infty} C(\f^k,P)^{1/nk}(\deg\f^k)^{1/k}
  = (\deg\f^k)^{1/k}.
\end{equation}
\par
We next show that~$\a_\f(P)$ can be computed using the subsequence of
iterates~$(\f^{nk})_{n\ge1}$. To see this, we estimate
\begin{align*}
  \a_\f(P)
  &= \limsup_{m\to\infty} h\bigl(\f^m(P)\bigr)^{1/m} \\
  &= \limsup_{n\to\infty} \max_{0\le i<k} h\bigl(\f^{nk+i}(P)\bigr)^{1/(nk+i)} \\
  &\le \limsup_{n\to\infty} \max_{0\le i<k} 
     \Bigl(d^ih\bigl(\f^{nk}(P)\bigr)+O(d^i)\Bigr)^{1/(nk+i)} 
   \quad\text{from \eqref{eqn:hfQledhQ},}\\
  &\le \limsup_{n\to\infty} 
     \Bigl(d^{k-1}h\bigl(\f^{nk}(P)\bigr)+O(d^k)\Bigr)^{1/nk}  \\
  &=  \limsup_{n\to\infty} h\bigl(\f^{nk}(P)\bigr)^{1/nk}  \\
  &\le \a_\f(P) \quad\text{by definition of $\a_\f(P)$.}
\end{align*}
This proves that for any integer~$k\ge2$, the arithmetic degree of~$P$
can be computed as
\begin{equation}
  \label{eqn:afPlimsuphfnk}
  \a_\f(P) = \limsup_{n\to\infty} h\bigl(\f^{nk}(P)\bigr)^{1/nk}.
\end{equation}
\par
Combining~\eqref{eqn:limsuphfnkP} and~\eqref{eqn:afPlimsuphfnk} gives
\[
  \a_\f(P) \le (\deg\f^k)^{1/k}.
\]
This estimate holds for all $k\ge1$, so letting $k\to\infty$ gives
the desired result $\a_\f(P)\le \d_\f$.
\end{proof}

\begin{question}
If~$P\in\PrePer(\f)$, then~$\a_\f(P)=1$, while
Proposition~\ref{proposition:hfnPledfnhP} says that
$\a_\f(P)\le\d_\f$. The arithmetic degree can thus be used to stratify
the points in~$\PP^N(\Qbar)$. Conjecture~\ref{conjecture:setofafPs}(a)
says that~$\a_\f(P)$ takes on only finitely many values.  What
do sets of the form
\begin{equation}
  \label{eqn:setafPeqb}
  \bigl\{Q\in\PP^N(\Qbar)_\f: \a_\f(Q)=\a_\f(P)\bigr\}
\end{equation}
look like for the finitely many possible values of~$\a_\f(P)$? 
\end{question}

%%%%%%%%%%%%%%%%%%%%%%%%%%%%%%%%%%%%%%%%%%%%%%%%%%%%%%%%%%%%%%%%%%%%%%
\section{Canonical heights for dominant rational maps}
\label{section:canonicalht}
%%%%%%%%%%%%%%%%%%%%%%%%%%%%%%%%%%%%%%%%%%%%%%%%%%%%%%%%%%%%%%%%%%%%%%

In this section we define and study basic properties of canonical
heights for general dominant rational maps. Later we give refined
results for monomial maps.

\begin{definition}
Let $\f:\PP^N\dashrightarrow\PP^N$ be a dominant rational map defined
over~$\Qbar$ with dynamical degree $\d_\f$ and associated quantity
$\ell_\f$ as defined in Conjecture~\ref{conjecture:dyndegtimespoly}.
Assume that $\d_\f>1$.  Let $P\in\PP^N(\Qbar)_\f$.  The
\emph{canonical height of~$P$} (\emph{relative to~$\f$}) is the
quantity
\[
  \hhat_\f(P)=\limsup_{n\to\infty} \frac{1}{n^{\ell_\f}\d_\f^{n}}h\bigl(\f^n(P)\bigr).
\]
\end{definition}

We give an example to show that the limsup is necessary in the
definition of the canonical height.

\begin{example}
\label{example:hhatnotlimit}
Let $d\ge2$ be an integer, and let $\f:\PP^2\dashrightarrow\PP^2$ be
the map $\f(x,y)=(x^{-d},y^{-d})$.  In homogeneous coordinates, we
have
\[
  \f^n\bigl([X,Y,1]\bigr) = \begin{cases} 
       \left[X^{d^n},Y^{d^n},1\right]&\text{if $n$ is even,}\\
       \left[Y^{d^n},X^{d^n},X^{d^n}Y^{d^n}\right]&\text{if $n$ is odd.}\\
  \end{cases}
\]
Thus $\deg(\f^n)=d^n$ if~$n$ is even, and~$\deg(\f^n)=2d^n$ if~$n$ is odd,
so in particular $\d_\f=d$ and $\ell_\f=0$.
\par
We now consider points $P=(x,y)\in\ZZ^2$ with $xy\ne0$. Then
\[
  d^{-n}h\bigl(\f^n(P)\bigr) = \begin{cases}
     \log\max\bigl\{|x|,|y|\bigr\}&\text{if $n$ is even,}\\
     \log\bigl(|xy|\bigr)&\text{if $n$ is odd.}\\
  \end{cases}
\]
Thus the sequence $\d_\f^{-n}h\bigl(\f^n(P)\bigr)$ does not have
a limit (unless $|x|=1$ or $|y|=1$). 
\end{example}

\begin{question}
Is it true that the sequence
$n^{-\ell_\f}\d_\f^{-n}h\bigl(\f^n(P)\bigr)$ has only finitely many
accumulation points in~$\RR\cup\{\infty\}$?
\end{question}

Our next example shows that the $n^{\ell_\f}$ factor in the definition
of~$\hhat_\f$ is necessary if we want the canonical height to be
finite.

\begin{example}
\label{example:hteqinfty}
Consider the map $\f([1,x,y])=[1,x^dy,y^d]$ with $d\ge2$. Then
\[
  \f^n([1,x,y]) = \left[1,x^{d^n}y^{nd^{n-1}},y^{d^n}\right],
\]
so
\[
  \d_\f = \lim_{n\to\infty} (d^n+nd^{n-1})^{1/n} = d
  \quad\text{and}\quad
  \ell_\f = 1.
\]
Then for integers~$x$ and~$y$  with $xy\ne0$, we have
\begin{align*}
  \hhat_\f([1,x,y])
  &= \limsup_{n\to\infty} 
      n^{-1} d^{-n} h\left(\left[1,x^{d^n}y^{nd^{n-1}},y^{d^n}\right]\right)\\
  &= \limsup_{n\to\infty} n^{-1}d^{-n} \log\left|x^{d^n}y^{nd^{n-1}}\right|\\
  &= \limsup_{n\to\infty} \left(\frac{1}{n}\log|x|
      + \frac{1}{d}\log|y|\right)
  = \frac{1}{d}\log|y|.
\end{align*}
\end{example}

Unfortunately, as the next example shows, the assumption that
$\ell_\f>0$ does not suffice to imply that $\hhat_\f(P)$ is finite.

\begin{example}
\label{example:infiniteht}
Let~$\f:\PP^3\to\PP^3$ be the map given in affine coordinates by
\[
  \f(x,y,z) = (xy+xz,y+z,z).
\]
Then
\[
  \f^n(x,y,z)=\bigl(x(y+z)(y+2z)\cdots(y+nz),y+nz,z\bigr),
\]
so
\[
  \deg(\f^n)=n+1,\quad \d_\f=1,\quad\text{and}\quad \ell_\f=1.
\]
On the other hand, we have
\[
  \frac{h\bigl(\f^n(1,0,1)\bigr)}{n^{\ell_\f}\d_\f^n}
  = \frac{h(n!,n,1)}{n}
  = \frac{\log (n!)}{n}
  \sim \log n
  \quad\text{as $n\to\infty$.}
\]
Hence for this example we have~$\hhat_\f(1,0,1)=\infty$.
\end{example}

\begin{question}
\label{question:hhatfinite}
If~$\d_\f>1$, is it true that the canonical height~$\hhat_\f(P)$ is
finite? Example~\ref{example:infiniteht} shows that the answer is
negative if~$\d_\f=1$, even if we require that~$\ell_\f>0$.
\end{question}

In Section~\ref{section:monomialmaps} we prove that
Question~\ref{question:hhatfinite} has an affirmative answer for
monomial maps; see Proposition~\ref{proposition:adjhtformonmaps}.

\begin{proposition}
\label{proposition:canhtprops}
The canonical height has the following properties\textup:
\begin{parts}
\Part{(a)}
$0\le \hhat_\f(P)\le\infty$.
\Part{(b)}
$\hhat_\f\bigl(\f(P)\bigr)=\d_\f\hhat_\f(P)$.
\Part{(c)}
If $P\in\PrePer(\f)$, then $\hhat_\f(P)=0$.
\Part{(d)}
If $\hhat_\f(P)>0$, then $\a_\f(P)=\d_\f$.
\end{parts}
\end{proposition}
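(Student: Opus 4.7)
The four parts are all fairly direct once one unwinds the definitions, so my plan is just to list the argument for each.

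Part (a) is immediate, since $h\ge 0$ and the limsup of a nonnegative sequence lies in $[0,\infty]$.

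For part (b), write $m=n+1$ and compute
\[
  \hhat_\f\bigl(\f(P)\bigr)
  =\limsup_{n\to\infty}\frac{h\bigl(\f^{n+1}(P)\bigr)}{n^{\ell_\f}\d_\f^{n}}
  =\d_\f\cdot\limsup_{m\to\infty}\left(\frac{m^{\ell_\f}}{(m-1)^{\ell_\f}}\right)\frac{h\bigl(\f^m(P)\bigr)}{m^{\ell_\f}\d_\f^{m}}.
\]
The prefactor $m^{\ell_\f}/(m-1)^{\ell_\f}$ has limit $1$, and a standard lemma on limsups says that if $a_m\to a>0$ and $b_m\ge 0$, then $\limsup a_m b_m=a\cdot\limsup b_m$ (allowing the value $\infty$). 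Applying this with $a_m=m^{\ell_\f}/(m-1)^{\ell_\f}$ gives $\hhat_\f(\f(P))=\d_\f\hhat_\f(P)$.

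Part (c) is routine: if $P\in\PrePer(\f)$, the orbit $\Orbit_\f(P)$ is a finite subset of $\PP^N(\Qbar)$, so $h\bigl(\f^n(P)\bigr)$ is uniformly bounded, and dividing by $n^{\ell_\f}\d_\f^n\to\infty$ (recall we are in the standing case $\d_\f>1$) forces $\hhat_\f(P)=0$.

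For part (d), suppose $\hhat_\f(P)>0$. Then there is some $c>0$ and a subsequence $n_k\to\infty$ with
\[
  \frac{h\bigl(\f^{n_k}(P)\bigr)}{n_k^{\ell_\f}\d_\f^{n_k}}\ge c
  \qquad\text{for all sufficiently large }k,
\]
whence $h\bigl(\f^{n_k}(P)\bigr)^{1/n_k}\ge c^{1/n_k}n_k^{\ell_\f/n_k}\d_\f$. Since $c^{1/n_k}\to 1$ and $n_k^{\ell_\f/n_k}\to 1$, letting $k\to\infty$ gives $\a_\f(P)\ge\d_\f$. Combined with the opposite inequality from Proposition~\ref{proposition:hfnPledfnhP}, this yields $\a_\f(P)=\d_\f$. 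The only small technical points to watch are (i) to handle $\hhat_\f(P)=\infty$ by the same subsequence argument with any large $c$, and (ii) to keep $\ell_\f\ge 0$ fixed so that $n^{\ell_\f/n}\to 1$; neither is a serious obstacle.
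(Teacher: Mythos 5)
Your proposal is correct and follows essentially the same route as the paper for all four parts: nonnegativity of $h$ for (a), the index-shift $m=n+1$ together with the $\bigl(m/(m-1)\bigr)^{\ell_\f}\to1$ factor for (b), boundedness of $h$ on a finite orbit for (c), and extracting a subsequence along which $n^{-\ell_\f}\d_\f^{-n}h(\f^n(P))$ is bounded below by a positive constant for (d). Your explicit treatment of the case $\hhat_\f(P)=\infty$ in (d) is a small refinement the paper glosses over (its constant $\tfrac12\hhat_\f(P)$ wouldn't literally make sense there), but the underlying argument is the same.
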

\begin{proof} (a) 
This is obvious, since the height~$h$ is
a non-negative function. 
\par\noindent(b)\enspace
We compute
\begin{align*}
  \hhat_\f\bigl(\f^n(P)\bigr)
  &=\limsup_{n\to\infty} \frac{1}{n^{\ell_\f}\d_\f^{n}}h\bigl(\f^{n+1}(P)\bigr) \\
  &=\limsup_{n\to\infty} \frac{1}{(n-1)^{\ell_\f}\d_\f^{n-1}}h\bigl(\f^{n}(P)\bigr) \\
  &=\d_\f\limsup_{n\to\infty} 
      \left(\frac{n}{n-1}\right)^{\ell_\f}  
      \frac{1}{n^{\ell_\f}\d_\f^{n}}h\bigl(\f^{n}(P)\bigr) \\
  &=\d_\f\hhat_\f(P).
\end{align*}
\par\noindent(c)\enspace
If $P$ is preperiodic, then $h\bigl(\f^n(P)\bigr)$ takes on only finitely
many values, so it is immediate from the defintion of~$\hhat_\f$
that $\hhat_\f(P)=0$.
\par\noindent(d)\enspace 
We are assuming that $\hhat_{\f}(P)>0$, and by definition
$\hhat_\f(P)$ is the limsup of
$n^{-\ell_\f}\d_\f^{-n}h\bigl(\f^n(P)\bigr)$, so we can find an infinite
sequence~$\Ncal$ of positive integers such that 
\[
  n^{-\ell_\f}\d_\f^{-n} h\bigl(\f^n(P)\bigr) \ge \frac12\hhat_\f(P) > 0
  \quad\text{for all $n\in\Ncal$.}
\]
It follows that
\[
  \a_{\f}(P)
  = \limsup_{n\to\infty} h\bigl(\f^n(P)\bigr)^{1/n} 
 \ge \limsup_{n\in\Ncal} 
   \left(n^{\ell_\f}\d_\f^{n}\cdot\frac12\hhat_{\f}(P)\right)^{1/n} 
 = \d_{\f}.
\]
But we know from Proposition~\ref{proposition:hfnPledfnhP} that
$\a_\f(P)\le\d_\f$ for every dominant rational map~$\f$, so this proves
that $\a_{\f}(P)=\d_{\f}$.  
\end{proof}

\begin{remark}
The implication
\[
  P\in\PrePer(\f)\quad\Longrightarrow\quad  \hhat_{\f}(P)=0
\]
in Proposition~\ref{proposition:canhtprops}(c) is trivial, but for
applications one generally wants to know that the opposite implication
holds, at least off of an explicitly described exceptional set. One
way to prove the opposite implication is to show that
$\hhat_{\f}(P)$ is equal to $h(P)+O(1)$, or at least satisfies
$\hhat_\f(P)\asymp h(P)$, again off of an exceptional
set.\footnote{For nonnegative functions~$F$ and~$G$, we write
  $F\asymp{G}$ to mean that there are positive
  constants~$c_1,c_2,c_3,c_4$ such that $c_1F(x)-c_2\le G(x)\le
  c_3F(x)+c_4$.}  From such an estimate, it immediately follows that
\begin{align*}
  \hhat_{\f}(P)=0
  &\quad\Longrightarrow\quad 0=\d_\f^n\hhat_\f(P)=\hhat_\f\bigl(\f^n(P)\bigr) \\
  &\quad\Longrightarrow\quad h\bigl(\f^n(P)\bigr) \asymp 0 \\
  &\quad\Longrightarrow\quad
        \text{$\Orbit_\f(P)$ is a set of bounded height,} \\
  &\quad\Longrightarrow\quad \text{$\Orbit_\f(P)$ is a finite set.}
\end{align*}
\par
When $\f$ is a birational map, another method used to prove the reverse
implication is to use an estimate of the form
\[
  C_1h\bigl(\f(P)\bigr) + C_2h\bigl(\f^{-1}(P)\bigr) \ge C_3 h(P) + O(1);
\]
see for example~\cite{denis:periodicaffineaut, arxiv0405007, arxiv0909.3573,
arxiv0909.3107, MR1988948, silverman:K3heights, silverman:henonmap}
% Denis, Kawaguchi x 2, Lee, Marcello, Silverman x 2
for results of this type for regular affine automorphisms.
\par
Corollary~\ref{corollary:irredcharpoly} says that the reverse
implication
\[
  \hhat_{\f}(P)=0 \quad\Longrightarrow\quad  P\in\PrePer(\f)
\]
is true for a certain (large) class of monomial maps on~$\PP^N$, but
the proof is not via an estimate $\hhat_\f(P)\asymp h(P)$.
\end{remark}

%%%%%%%%%%%%%%%%%%%%%%%%%%%%%%%%%%%%%%%%%%%%%%%%%%%%%%%%%%%%%%%%%%%%%%
\section{Monomial maps and canonical heights}
\label{section:monomialmaps}
%%%%%%%%%%%%%%%%%%%%%%%%%%%%%%%%%%%%%%%%%%%%%%%%%%%%%%%%%%%%%%%%%%%%%%

A monomial map is an endomorphism of the
torus~$\GG_m^N$. Embedding~$\GG_m^N$ in~$\PP^N$, monomial maps induce
rational self-maps of~$\PP^N$. In this section we study the geometry
of iteration of these maps and prove that the canonical height is
finite. We begin with a formal definition which sets the notation that
we will use throughout the rest of this article.

\begin{definition}
We write $\Mat_N^+(\ZZ)$ for the set of $N$-by-$N$ matrices with
integer coefficients and nonzero determinant. To each matrix
$A\in\Mat_N^+(\ZZ)$ we associate the
\emph{monomial map} $\f_A:\GG_m^N\to\GG_m^N$ given by the formula
\begin{multline*}
  \f_A(X_1,\ldots,X_N)={}\\
   \left(
    X_1^{a_{11}}X_2^{a_{12}}\cdots X_N^{a_{1N}},\;
    X_1^{a_{21}}X_2^{a_{22}}\cdots X_N^{a_{2N}},\ldots,
    X_1^{a_{N1}}X_2^{a_{N2}}\cdots X_N^{a_{NN}} \right).
\end{multline*}
We call~$\f_A$ the \emph{monomial map associated to~$A$}.  We note
that~$\f_A$ induces a rational map $\f_A:\PP^N\dashrightarrow\PP^N$.
We denote the \emph{spectral radius} of~$A$ by
\[
  \rho(A) = \max\bigl\{|\l| : 
    \text{$\l\in\CC$ is an eigenvalue for $A$}\bigr\}.
\]
\end{definition}

It is immediate from the definition that if $A,B\in\Mat_N^+(\ZZ)$ are
matrices with associated monomial maps~$\f_A$ and~$\f_B$, then
\begin{equation}
  \label{eqn:fABeqfAfB}
  \f_{AB}(P) = (\f_A\circ\f_B)(P)
  \quad\text{and}\quad
  \f_{A+B}(P) = \f_A(P)\cdot\f_B(P).
\end{equation}

\begin{proposition}
\label{proposition:monmapprops}
Let~$A\in\Mat_N^+(\ZZ)$ be a matrix with associated monomial map~$\f_A$.%
\begin{parts}
\Part{(a)}
$\rho(A)\ge1$.
\Part{(b)}
$\rho(A)=1$ if and only if all of the eigenvalues of~$A$ are roots of unity,
which is equivalent to
$(A^m-I)^n=0$ for some positive integers~$m$ and~$n$.
\Part{(c)}
\textup{(Hasselblatt--Propp~\cite{MR2358970}; see also~\cite{arxiv1010.6285})}
The dynamical degree of~$\f_A$ is equal to its spectral radius,
\[
  \d_{\f_A}=\rho(A).
\]
\Part{(d)}
If none of the eigenvalues of~$A$ are roots of unity, then
the set of preperiodic points of~$\f_A$ in~$\GG_m^N(\CC)$ is
\[
  \PrePer(\f_A) = \GG_m^N(\Qbar)_\tors,
\]
where the torsion subgroup~$\GG_m^N(\Qbar)_\tors$ is the set of 
points whose coordinates are roots of unity.
\end{parts}
\end{proposition}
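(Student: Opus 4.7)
Part (a) is immediate: $\det(A)$ is a nonzero integer, so $|\det(A)|=\prod_i|\l_i|\ge 1$, which forces $\rho(A)=\max_i|\l_i|\ge 1$. For (b), the implication ``all eigenvalues are roots of unity $\Rightarrow\rho(A)=1$'' is trivial. Conversely, if $\rho(A)=1$, then $|\l_i|\le 1$ for all $i$, and combined with $\prod|\l_i|=|\det A|\ge 1$ this forces $|\l_i|=1$ for every $i$. The characteristic polynomial $\chi_A\in\ZZ[T]$ therefore has all its roots on the unit circle, and Kronecker's theorem yields that each $\l_i$ is a root of unity. The third equivalence is pure linear algebra: if every $\l_i^m=1$ for a common $m$, then $A^m$ has only $1$ as eigenvalue, so $A^m-I$ is nilpotent; conversely $(A^m-I)^n=0$ forces each eigenvalue of $A^m$ to equal $1$.

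For (c) the plan is to follow the Hasselblatt--Propp approach of~\cite{MR2358970} via the identity $\f_A^n=\f_{A^n}$, which is a consequence of~\eqref{eqn:fABeqfAfB}. This reduces the computation of $\deg(\f_A^n)$ to understanding how $\deg(\f_B)$ depends on $B\in\Mat_N^+(\ZZ)$. Writing each row $\bfb_i$ of $B$ as $\bfb_i^+-\bfb_i^-$ with nonnegative parts, and homogenizing $\f_B$ on $\PP^N$ by clearing denominators uniformly across all coordinates, a direct inspection of the resulting numerators and denominators yields two-sided bounds of the form
\[
  c_1\|B\|_\infty \le \deg(\f_B) \le c_2\|B\|_\infty
\]
with constants depending only on $N$. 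Applying this to $B=A^n$ and invoking Gelfand's spectral radius formula $\lim_{n\to\infty}\|A^n\|_\infty^{1/n}=\rho(A)$ gives $\d_{\f_A}=\rho(A)$.

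For (d), the inclusion $\GG_m^N(\Qbar)_\tors\subset\PrePer(\f_A)$ is immediate: if every coordinate of $P$ is an $m^{\text{th}}$ root of unity, then every iterate $\f_A^n(P)$ also lies in the finite group $\mu_m^N$, so $P$ is preperiodic. For the reverse inclusion, suppose $\f_A^{k+n}(P)=\f_A^k(P)$ for some $k\ge 0$ and $n\ge 1$. Using the multiplicative relation in~\eqref{eqn:fABeqfAfB} to convert this equality of iterates into a group-theoretic identity on $\GG_m^N(\CC)$ gives $\f_{M}(P)=(1,\ldots,1)$, where $M=A^k(A^n-I)=A^{k+n}-A^k$. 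Because no eigenvalue $\l_i$ of $A$ is a root of unity, we have $\l_i^n\ne 1$ for every $i$ and every $n\ge 1$, and hence
\[
  \det(A^n-I)=\prod_i(\l_i^n-1)\ne 0.
\]
Therefore $\det(M)=\det(A)^k\det(A^n-I)\ne 0$, so $\f_M:\GG_m^N\to\GG_m^N$ is an isogeny with finite kernel, and $P\in\ker(\f_M)$ must be a torsion point.

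The main obstacle is the two-sided degree estimate needed for (c), which is not a formal consequence of the composition rule~\eqref{eqn:fABeqfAfB}: extending $\f_B$ to a rational self-map of $\PP^N$ requires clearing denominators coming from negative exponents in the rows of $B$, and one must verify carefully that this bookkeeping does not destroy the comparability of $\deg(\f_B)$ with a matrix norm of $B$. Once that estimate is in hand, (c) reduces to Gelfand's formula, while (a), (b), and (d) follow from essentially formal arguments as outlined above.
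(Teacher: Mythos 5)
Your proofs of (a), (b), and (d) are essentially the paper's arguments, with the following small variations worth noting. For (b), you first pin down $|\l_i|=1$ for every $i$ using $\prod|\l_i|=|\det A|\ge1$ before invoking Kronecker; the paper applies Kronecker directly to each $\l_i$ (observing that its conjugates lie among the eigenvalues, hence in the closed unit disc). Both are fine. For (d), you reduce to $\f_M(P)=\mathbf{1}$ with $M=A^{k+n}-A^k$ and $\det M\ne0$ exactly as the paper does (its Lemma~\ref{lemma:fAPeqfBP}), but you finish by citing that $\ker\f_M$ is a finite subgroup of $\GG_m^N(\CC)$ and therefore torsion, whereas the paper runs the adjugate-matrix trick $\f_{B^{\mathrm{adj}}}\circ\f_B=\f_{\Delta I_N}$ to produce the explicit torsion order $\Delta=\det M$. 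Your version is cleaner conceptually; the paper's is more constructive and is the one quoted later.

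The real difference is (c): the paper simply cites Hasselblatt--Propp and does not reprove it, whereas you sketch the argument. Your sketch is the right one (reduce via $\f_A^n=\f_{A^n}$ to the two-sided estimate $c_1\|B\|_\infty\le\deg(\f_B)\le c_2\|B\|_\infty$ with constants depending only on $N$, then invoke Gelfand's formula $\lim\|A^n\|_\infty^{1/n}=\rho(A)$), and you correctly identify that the nontrivial point is the lower bound on $\deg(\f_B)$ after clearing denominators and removing common factors. This is exactly the part Lin (\cite{arxiv1007.0253}) treats by showing $A\mapsto\deg(\f_A)$ extends to a norm-like function on $\Mat_N(\RR)$; it is not a formal consequence of anything else in the paper, and to make your (c) self-contained you would indeed have to supply that estimate. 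Since the paper defers (c) to a citation, there is no gap relative to the paper, but be aware you are proposing to reprove a result the paper deliberately does not reprove.
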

\begin{proof}
Let $\l_1,\ldots,\l_N$ be the eigenvalues of~$A$, labeled so that
$|\l_1|=\rho(A)$. The product~$\l_1\l_2\cdots\l_N$ of the eigenvalues
equals $\det(A)$, which is a non-zero integer, so
certainly~$|\l_1|\ge1$. This proves~(a)
\par
If~$\rho(A)=1$, then for every~$i$, the algebraic integer~$\l_i$ has
the property that all of its Galois conjugates are in the closed unit
circle. It follows from Kronecker's
theorem~\cite[Theorem~3.8]{MR2316407} that~$\l_i$ is a root of
unity. Thus all of the eigenvalues of~$A$ are roots of unity, so~$A$
is quasi-unipotent. Conversely, if~$A$ is quasi-unipotent, then its
characteristic polynomial divides $(T^n-1)^m$ for some $n\ge1$ and
$m\ge1$, so the eigenvalues of~$A$ are roots of unity, hence have
absolute value equal to~$1$. This proves the
first part of~(b), and the second part is easy.
\par
The fact that $\d_{\f_A}=\rho(A)$ is due to Hasselblatt and
Propp~\cite[Theorem~6.2]{MR2358970}, which gives~(c).
\par
Finally, for~(d), we use~\eqref{eqn:fABeqfAfB} to see that
\[
  \f_A^n(P)=\f_A^m(P)
  \;\Longleftrightarrow\;
  \f_{A^n}(P)=\f_{A^m}(P)
  \;\Longleftrightarrow\;
  \f_{A^n-A^m}(P)=1.
\]
Thus~(b) and the following lemma complete the proof of~(d).
\end{proof}

\begin{lemma}
\label{lemma:fAPeqfBP} 
Let~$B\in\Mat_N^+(\ZZ)$ be a matrix with~$\det(B)\ne0$, and
suppose that a point $P\in\GG_m^N(\CC)$ satisfies
\[
  \f_B(P) = 1.
\]
Then every coordinate of~$P$ is a root of unity.  
\end{lemma}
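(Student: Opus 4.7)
The plan is to use the adjugate matrix to invert $B$ over $\ZZ$, at the cost of a factor of $\det(B)$. Specifically, set $d = \det(B) \neq 0$ and let $C = \operatorname{adj}(B) \in \Mat_N(\ZZ)$ be the classical adjugate, so that $CB = dI$.

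First I would observe that the identity $\f_{AB}=\f_A\circ\f_B$ from \eqref{eqn:fABeqfAfB} extends in the obvious way from $\Mat_N^+(\ZZ)$ to any pair of integer matrices when we work on $\GG_m^N$: since the coordinates of $P\in\GG_m^N(\CC)$ are nonzero, we can raise them to arbitrary integer exponents, so $\f_C$ is defined for every $C\in\Mat_N(\ZZ)$ (we do not need $\det(C)\ne 0$ for this lemma), and composition still corresponds to multiplication of matrices of exponents.

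With this in hand, the proof is essentially a one-liner. Applying $\f_C$ to the hypothesized equality $\f_B(P) = 1$ gives
\[
  \f_{CB}(P) \;=\; \f_C\bigl(\f_B(P)\bigr) \;=\; \f_C(1) \;=\; 1.
\]
But $CB = dI$, and the monomial map $\f_{dI}$ is simply the $d$-th power map on each coordinate, so writing $P = (x_1,\ldots,x_N)$ we obtain $x_k^d = 1$ for every $k$. Hence each $x_k$ is a $d$-th root of unity, which is exactly the conclusion.

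I do not expect any real obstacle: the only subtlety is confirming that the composition rule $\f_{CB} = \f_C \circ \f_B$ is still valid when $C$ has possibly negative entries or zero determinant. On the torus $\GG_m^N$ this is immediate from the definition of $\f_C$ as a Laurent monomial map, so this step reduces to a brief remark.
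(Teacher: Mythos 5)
Your proof is correct and is essentially identical to the paper's: both use the adjugate matrix $C=\operatorname{adj}(B)$ with $CB=\det(B)I$ and the composition law $\f_{CB}=\f_C\circ\f_B$ to conclude that each coordinate of $P$ is a $\det(B)$-th root of unity. The only difference is cosmetic — you flag the (harmless) extension of the composition formula to matrices of arbitrary determinant, which the paper leaves implicit.
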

\begin{proof}
For notational clarity, we write $\bfe=(1,1,\ldots,1)$ for the
identity element of $\GG_m^N(\CC)$, so our assumption
is that~$\f_B(P)=\bfe$.  
Let $\D=\det(B)$, and let~$C=B^{\textup{adj}}$ be the adjoint
matrix, so $CB=\D I_N$. Then
\[
  \bfe = \f_C(\bfe)
  = \f_C\circ\f_{B}(P)
  = \f_{CB}(P)
  = \f_{\D I_N}(P).
\]
Hence every coordinate of~$P$ is a $\D^{\text{th}}$-root of unity.  [I
thank Jan-Li Lin (private communication) for showing me this proof.
More generally, for any~$B\in\Mat_N(\ZZ)$, the map $\f_B$ is an
endomorphism of~$\GG_m^N$, so its kernel is an algebraic subgroup
with codimension equal to the rank of~$B$.]
\end{proof}

\begin{remark}
The restriction in Proposition~\ref{proposition:monmapprops}(d)
that~$A$ has no eigenvalues that are roots of unity is necessary, as
is seen for example from the map
\[
  \f(x,y)=(x^ay^{1-a},x^by^{1-b})
  \quad\text{associated to the matrix}\quad
  A=\left(\begin{smallmatrix} a&1-a\\b&1-b\\ \end{smallmatrix}\right).
\]
Then~$1$ is an eigenvalue of~$A$, and $\f(t,t)=(t,t)$, so~$(t,t)$ is a
fixed point for every~$t$.
\end{remark}

Lin, Jonsson, and Wulcan have proven a strengthened version of
Conjecture~\ref{conjecture:dyndegtimespoly} for monomial maps.

\begin{theorem}
\label{theorem:ellinvariantmonomap}
\textup{(Lin \cite[Theorem~6.2]{arxiv1007.0253},
Jonsson--Wulcan~\cite{arXiv:1001.3938})} Let
$A\in\Mat_N^+(\ZZ)$, let~$\f_A$ be the associated monomial map, and
let~$\ell(A)+1$ be the dimension of the largest Jordan block of~$A$
among those blocks corresponding to eigenvalues of maximal absolute
value.  Then 
\begin{equation}
  \label{eqn:degfAnasympnellA}
  \deg(\f_A^n) \asymp n^{\ell(A)} \rho(A)^n
  \quad\text{for all $n\ge1$,}
\end{equation}
where the implied constants depend only on~$A$.  In
particular,~$\ell(A)$ is an integer satisfying $0\le\ell(A)<N$.
\end{theorem}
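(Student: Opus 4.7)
The plan is to reduce the computation to a combinatorial formula for $\deg(\f_B)$ in terms of the entries of $B$, and then apply this formula with $B=A^n$, where the growth of $\|A^n\|$ is controlled by the Jordan form of $A$. The starting observation, from~\eqref{eqn:fABeqfAfB}, is the identity $\f_A^n=\f_{A^n}$, which gives $\deg(\f_A^n)=\deg(\f_{A^n})$, so I need only understand how $\deg(\f_B)$ depends on $B$ for $B\in\Mat_N^+(\ZZ)$.

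To derive an explicit formula, I would extend $\f_B$ from $\GG_m^N$ to $\PP^N$ in homogeneous coordinates. In the affine chart $x_j=X_j/X_0$ of $\PP^N$, the $i$-th Laurent monomial is $X^{\bfb_i}=X_0^{-s_i}\prod_j X_j^{b_{ij}}$ where $s_i=\sum_j b_{ij}$. Multiplying through by $X_0^D\prod_j X_j^{T_j}$, one finds that the smallest choices making all $N+1$ monomial components have nonnegative exponents are $D=\max(0,s_1,\ldots,s_N)$ and $T_j=\max_i\max(0,-b_{ij})$; and a direct check shows that with these minimal choices the minimum exponent of each variable across the $N+1$ monomials is zero, so the map has no common factor. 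Hence
\[
  \deg(\f_B) \;=\; \max(0,s_1,\ldots,s_N) \;+\; \sum_{j=1}^{N}\max_{1\le i\le N}\max(0,-b_{ij}).
\]
The upper bound $\deg(\f_B)\le(N+1)\|B\|_\infty$, with $\|B\|_\infty=\max_{i,j}|b_{ij}|$, is immediate. For the matching lower bound $\deg(\f_B)\ge\|B\|_\infty/N$, let $M=\|B\|_\infty$ and pick $(i_0,j_0)$ with $|b_{i_0j_0}|=M$: if $b_{i_0j_0}=-M$ the second term is at least $M$; if $b_{i_0j_0}=+M$ then either $s_{i_0}\ge M/N$ contributes to the first term, or $\sum_{j\ne j_0}b_{i_0j}<-M(N-1)/N$ and by pigeonhole some $b_{i_0j_1}<-M/N$ contributes to the second term. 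Thus $\deg(\f_B)\asymp\|B\|_\infty$ with constants depending only on $N$.

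Specializing to $B=A^n$ gives $\deg(\f_A^n)\asymp\|A^n\|_\infty$, and the theorem then follows from the standard Jordan-form estimate $\|A^n\|_\infty\asymp n^{\ell(A)}\rho(A)^n$. The latter is obtained from the binomial expansion $(\lambda I+E)^n=\sum_{k=0}^{\ell}\binom{n}{k}\lambda^{n-k}E^k$ applied blockwise (where $E$ is the nilpotent part of a Jordan block), showing that the dominant contribution is $\asymp n^{\ell(A)}\rho(A)^n$, coming from the largest Jordan block whose eigenvalue has absolute value equal to $\rho(A)$. The bound $\ell(A)<N$ is immediate since no Jordan block has size exceeding $N$. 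The main obstacle is the lower bound $\deg(\f_B)\gtrsim\|B\|_\infty$: the closed-form expression is a sum of maxima with no monotonicity in the entries of $B$, so one must rule out cancellations between the two terms, which is precisely what the case analysis above accomplishes.
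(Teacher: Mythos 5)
Your argument is correct, and it is genuinely different from the route the paper takes: the paper does not prove this theorem at all but cites Lin and Jonsson--Wulcan, describing Lin's proof as extending $A\mapsto\deg(\f_A)$ to a function on $\Mat_N^+(\RR)$ that is ``more-or-less a norm'' and then invoking compactness. Your proof makes that norm-equivalence completely explicit and elementary: the closed formula $\deg(\f_B)=\max(0,s_1,\ldots,s_N)+\sum_j\max_i\max(0,-b_{ij})$ is the standard Hasselblatt--Propp expression (your verification that the minimal clearing of denominators leaves no common monomial factor is the right justification, since all components are monomials), and your two-sided comparison $\|B\|_\infty/N\le\deg(\f_B)\le C_N\|B\|_\infty$, obtained by the pigeonhole case analysis on the sign of the extremal entry, replaces the compactness step entirely. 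Combined with $\f_A^n=\f_{A^n}$ and the Jordan-form estimate $\|A^n\|_\infty\asymp n^{\ell(A)}\rho(A)^n$ (where one should note that passing between the standard basis and the Jordan basis only changes $\|\cdot\|_\infty$ by fixed constants, so the two-sided bound survives conjugation), this gives a self-contained proof of \eqref{eqn:degfAnasympnellA}. What your approach buys is transparency and effectivity of the constants; what it gives up is the sharper asymptotic $\deg(\f_A^n)=cn^{\ell(A)}\rho(A)^n(1+o(1))$ that Lin's and Jonsson--Wulcan's methods yield. Two trivial slips that do not affect the argument: your stated upper bound $(N+1)\|B\|_\infty$ should be $2N\|B\|_\infty$ (the first term alone can reach $N\|B\|_\infty$ while the second also contributes up to $N\|B\|_\infty$), and the binomial expansion of a Jordan block of size $t$ runs over $0\le k\le t-1$ rather than $0\le k\le\ell$.
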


Lin proves Theorem~\ref{theorem:ellinvariantmonomap} by extending the
degree map $A\mapsto\deg(\f_A)$ to a function on~$\Mat_N^+(\RR)$,
showing that the resulting function is more-or-less a norm, and using
a compactness argument.  We can use
Theorem~\ref{theorem:ellinvariantmonomap} and an elementary argument
to prove that the canonical height for monomial maps is finite.

\begin{proposition}
\label{proposition:adjhtformonmaps}
Let~$\f_A$ be a monomial map associated to a
matrix~$A\in\Mat_N^+(\ZZ)$ with either $\rho(A)>0$ or $\ell(A)>0$,
where~$\rho(A)$ is the spectral radius of~$A$ and~$\ell(A)$ is as in
the statement of Theorem~$\ref{theorem:ellinvariantmonomap}$.  Then
there is a constant~$C(A)$ such that
\[
  \hhat_{\f_A}(P) \le C(A)h(P) 
  \quad\text{for all~$P\in\GG_m^N(\Qbar)$.}
\]
In particular, the canonical height $\hhat_{\f_A}(P)$ is finite.
\end{proposition}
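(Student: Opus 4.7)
The plan is to establish the slightly stronger estimate
\[
h\bigl(\f_A^n(P)\bigr)\;\le\;C(A)\,n^{\ell(A)}\rho(A)^n\,h(P)
\qquad\text{for all }n\ge1,
\]
with a constant $C(A)$ depending only on $A$. Dividing by $n^{\ell_{\f_A}}\d_{\f_A}^n$ and taking the $\limsup$ then yields the proposition, since $\rho(A)=\d_{\f_A}$ by Proposition~\ref{proposition:monmapprops}(c) and $\ell(A)=\ell_{\f_A}$ by Theorem~\ref{theorem:ellinvariantmonomap}.

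The main analytic input is the well-known identity
\[
h(x)\;=\;\frac{1}{2[K:\QQ]}\sum_v [K_v:\QQ_v]\bigl|\log|x|_v\bigr|
\qquad(x\in K^\times),
\]
which follows from $\log^+ t=\tfrac12(\log t+|\log t|)$ combined with the product formula $\sum_v[K_v:\QQ_v]\log|x|_v=0$. From this I deduce at once the multiplicative triangle inequality
\[
h\Bigl(\prod_{k=1}^{N}x_k^{m_k}\Bigr)\;\le\;\sum_{k=1}^{N}|m_k|\,h(x_k)
\qquad(m_k\in\ZZ,\ x_k\in K^\times),
\]
by applying $\bigl|\sum_k m_k\log|x_k|_v\bigr|\le\sum_k|m_k|\cdot\bigl|\log|x_k|_v\bigr|$ at every place $v$. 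Taking $m_k=(A^n)_{jk}$ and using that the $j$-th coordinate of $\f_A^n(P)$ equals $\prod_k x_k^{(A^n)_{jk}}$, I obtain, after summing on $j$,
\[
\sum_{j=1}^{N}h\bigl(\f_A^n(P)_j\bigr)\;\le\;\|A^n\|\cdot\sum_{k=1}^{N}h(x_k),
\]
where $\|A^n\|=\max_k\sum_j|(A^n)_{jk}|$ (any equivalent matrix norm would do).

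To pass from coordinate-wise heights to the projective height on $\GG_m^N\subset\PP^N$, I would use the elementary place-by-place inequalities
\[
\log\max(1,|y_1|_v,\ldots,|y_N|_v)\;\le\;\sum_j\log^+|y_j|_v\;\le\;N\log\max(1,|y_1|_v,\ldots,|y_N|_v),
\]
which imply $h(Q)\le\sum_j h(y_j)\le Nh(Q)$ for any $Q=(y_1,\ldots,y_N)\in\GG_m^N(K)$ embedded as $[1,y_1,\ldots,y_N]\in\PP^N$. Combining with the previous display gives $h\bigl(\f_A^n(P)\bigr)\le N\|A^n\|\,h(P)$. The last ingredient is the standard linear-algebra fact that $\|A^n\|\le c(A)n^{\ell(A)}\rho(A)^n$, proved by passing $A$ to its Jordan normal form over $\CC$: a Jordan block of size $s+1$ with eigenvalue $\lambda$ contributes entries of the form $\binom{n}{k}\lambda^{n-k}$ to $J^n$, whose maximal order of growth is $n^s|\lambda|^n$, and by the very definition of $\ell(A)$ this is bounded by a constant multiple of $n^{\ell(A)}\rho(A)^n$. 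Assembling the pieces gives the desired inequality. There is no real obstacle here; the only point requiring attention is the bookkeeping with absolute values, but the symmetric formulation in terms of $\bigl|\log|x|_v\bigr|$ absorbs the archimedean/non-archimedean distinction and lets the multiplicative triangle inequality apply uniformly at every place.
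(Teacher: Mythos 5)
Your proposal is correct, and it follows the same overall strategy as the paper's proof: bound $h\bigl(\f_A^n(P)\bigr)$ by a constant times $\|A^n\|\,h(P)$ via a triangle inequality over the places of $K$, then invoke the Jordan-form estimate $\|A^n\|\ll n^{\ell(A)}\rho(A)^n$ and the identifications $\d_{\f_A}=\rho(A)$, $\ell_{\f_A}=\ell(A)$. The one genuine difference is in how the local estimate is carried out, and here your version is actually the more careful one. The paper's displayed chain asserts, place by place, that $\maxplus\bigl(A^n\log\|P\|_v\bigr)\le N\|A^n\|_\infty\max_i\log^+\|x_i\|_v$; read literally this fails whenever a negative entry of $A^n$ meets a place with $\|x_j\|_v<1$ (already for $N=1$, $A^n=(-1)$, $x_1=1/2$ at the archimedean place), and the inequality is only salvaged globally by the product formula. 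Your device of rewriting the height symmetrically as $h(x)=\frac{1}{2[K:\QQ]}\sum_v[K_v:\QQ_v]\bigl|\log|x|_v\bigr|$, which builds the product formula in once and for all and makes $h\bigl(\prod_k x_k^{m_k}\bigr)\le\sum_k|m_k|h(x_k)$ a genuine place-by-place triangle inequality, is exactly what is needed to handle the negative exponents that monomial maps with non-nonnegative matrices produce. The passage between the coordinatewise heights and the projective height, and the Jordan-block growth estimate, are both handled correctly. So: same route, with the local bookkeeping done right.
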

\begin{proof}
Theorem~\ref{theorem:ellinvariantmonomap} and the definition
of dynamical degree imply that
$\d_\f=\rho(A)$ and $\ell_\f=\ell(A)$, so the definition
of the canonical height becomes
\begin{equation}
  \label{eqn:hhatfAPnlrhoA}
  \hhat_{\f_A}(P) = \limsup_{n\to\infty}
  \frac{1}{n^{\ell(A)}\rho(A)^n}h\bigl(\f^n(P)\bigr).
\end{equation}
For a matrix $C=(c_{ij})\in\Mat_N(\CC)$, we write $\|C\|_\infty$ for
the sup-norm $\max|c_{ij}|$.  An elementary triangle inequality
estimate (cf.\ \cite[Lemma~6.4]{arxiv1007.0253}) gives
\begin{equation}
  \label{eqn:AnllnellArAn}
  \|A^n\|_\infty \ll n^{\ell(A)}\rho(A)^n.
\end{equation}
(In fact, $\|A^n\|_\infty \asymp n^{\ell(A)}\rho(A)^n$.)
\par
We  write
\[
  A^n = \bigl(a_{ij}(n)\bigr)_{1\le i,j\le N},
\]
and we let
\[
  P=(x_1,\ldots,x_N)\quad\text{and}\quad
  \f_A^n(P)=(y_1,\ldots,y_N).
\]
Then
\begin{align*}
  h\bigl(\f^n(P)\bigr)
  &= \sum_{v\in M_K} 
     {\max_{1\le i\le N}} \log^+\|y_i\|_v \\
  &= \sum_{v\in M_K} 
     {\max_{1\le i\le N}} \left\{0,
       \sum_{j=1}^N a_{ij}(n)\log\|x_i\|_v \right\} \\
  &\le N \|A^n\|_\infty \sum_{v\in M_K} {\max_{1\le i\le N}} \log^+\|x_i\|_v \\
  &\ll  n^{\ell(A)}\rho(A)^nh(P)
      \quad\text{from \eqref{eqn:AnllnellArAn},} 
\end{align*}
where the implied constant depends on~$N$ and~$A$, but is
independent of~$n$ and~$P$. This inequality shows that the limsup
in~\eqref{eqn:hhatfAPnlrhoA} is finite and bounded by a constant multiple
of~$h(P)$.
\end{proof}

\begin{remark}
We note that
\[
  \hhat_{\f_A}(x_1,\ldots,x_N)
  = \hhat_{\f_A}(\z_1x_1,\ldots,\z_Nx_N)
\]
for any roots of unity~$\z_1,\ldots,\z_N$.  In particular, the set of
points satisfying~$\hhat_{\f_A}(P)=0$ is invariant under multiplying
the coordinates of the points by roots of unity.
\end{remark}

%%%%%%%%%%%%%%%%%%%%%%%%%%%%%%%%%%%%%%%%%%%%%%%%%%%%%%%%%%%%%%%%%%%%%%
\section{Points of canonical height zero for monomial maps}
\label{section:monomialmapht0}
%%%%%%%%%%%%%%%%%%%%%%%%%%%%%%%%%%%%%%%%%%%%%%%%%%%%%%%%%%%%%%%%%%%%%%

It is a trivial fact (Proposition~\ref{proposition:canhtprops}(c))
that preperiodic points have canonical height zero. The converse is
not true in full generality, and it can be quite delicate to determine
the set of points having canonical height zero.  In this section we
prove, among other things, that the converse is true for monomial maps
whose associated matrix has irreducible characteristic polynomial. We
start with a general result which says that for monomial maps, the set
of points of canonical height zero lies in a proper algebraic
subgroup.

\begin{definition}
Let~$A\in\GL_N(\QQ)$.  A \emph{Jordan subspace} for~$A$ is
an~$A$-invariant subspace of~$\Qbar^N$ corresponding to a single
Jordan block of~$A$. A Jordan subspace~$V\subset\Qbar^N$ with associated
eigenvalue~$\l$ is called a \emph{maximal Jordan subspace} if
$|\l|=\rho(A)$ and if the dimension of~$V$ is maximal among the Jordan
subspaces whose eigenvalue have magnitude equal to~$\rho(A)$.  We set
\begin{align*}
  r(A) &= \text{number of maximal Jordan subspaces,} \\
  \rbar(A) &= \#\left\{ \s(V) : 
      \begin{tabular}{@{}l@{}}
           $V$ is a maximal Jordan subspace\\
           for $A$ and $\s\in\Gal(\Qbar/\QQ)$\\
      \end{tabular}
    \right\}.
\end{align*}
Thus~$\rbar(A)$ is the number of distinct $\Qbar$-subspaces
of~$\Qbar^N$ that are Galois conjugate to a maximal Jordan subspace
of~$A$. We note that~$\rbar(A)\ge r(A)\ge1$, since~$A$
always has  at least one maximal Jordan subspace.
\end{definition}

\begin{definition}
Let~$G$ be an algebraic subgroup of $\GG_m^N$.  We
write~$G(\Qbar)^\div$ for the divisible hull of~$G(\Qbar)$,
\begin{multline*}
  G(\Qbar)^\div
  = \bigl\{(\a_1,\ldots,\a_N)\in\GG_m^N(\Qbar) \\
    {} : \text{$(\a_1^n,\ldots,\a_N^n)\in G(\Qbar)$ for some $n\ge1$}\bigr\}.
\end{multline*}
Equivalently,~$G(\Qbar)^\div$ is the set of translates of~$G(\Qbar)$
by points in $\GG_m^N(\Qbar)_\tors$.\EndNote{appendix5}
\end{definition}

We can now state our main result.

\begin{theorem}
\label{theorem:linrelsonlogP}
Let $A\in\Mat_N^+(\ZZ)$ be a matrix whose associated monomial
map~$\f_A$ has dynamical degree~$\d_{\f_A}>1$. There is an algebraic
subgroup $G\subset\GG_m^N$ with dimension
\[
  \dim G \ge N - \rbar(A)
\]
such that
\[
  \bigl\{P\in\GG_m^N(\Qbar) : \hhat_{\f_A}(P)=0 \bigr\}
  \subset G(\Qbar)^\div.
\]
\end{theorem}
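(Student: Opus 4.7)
The plan is to translate the condition $\hhat_{\f_A}(P)=0$ into vanishing statements for certain $\Qbar$-linear functionals arising from the Jordan decomposition of $A$, and then to use Baker's theorem and the product formula to convert these into $\QQ$-rational multiplicative relations on the coordinates of $P$.

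First I would put $A$ in Jordan form over $\Qbar$: the entries of $A^n$ on a Jordan block of size $d+1$ and eigenvalue $\l$ are polynomials in $n$ of degree $\le d$ times $\l^n$, so the dominant growth $n^{\ell(A)}\rho(A)^n$ is realized precisely on the \emph{maximal} Jordan subspaces $V$ of $A$ (those of block size $\ell(A)+1$ with eigenvalue of modulus $\rho(A)$). For each such $V$ let $\phi_V\in(\Qbar^N)^*$ denote the ``top-row'' functional in its Jordan basis, so that $\phi_V(\bfv)$ is the coefficient of $\binom{n}{\ell(A)}\l^n$ in the components of $A^n\bfv$ attached to $V$. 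For $P=(x_1,\ldots,x_N)\in\GG_m^N(\Qbar)$ and each place $v$ of a number field of definition, set $\bfu_v=(\log\|x_1\|_v,\ldots,\log\|x_N\|_v)\in\RR^N$, so that the $i$-th coordinate of $\f_A^n(P)$ has $v$-log-norm $(A^n\bfu_v)_i$ and
\begin{equation*}
h\bigl(\f_A^n(P)\bigr)=\sum_v \max\bigl(0,(A^n\bfu_v)_1,\ldots,(A^n\bfu_v)_N\bigr).
\end{equation*}

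Next, by a compactness argument, pass to a subsequence of $n$ along which $\l^n/\rho(A)^n$ converges for every eigenvalue $\l$ of maximal modulus. The hypothesis $\hhat_{\f_A}(P)=0$, together with the product formula $\sum_v\bfu_v=\bfzero$ (which prevents positive and negative contributions to $\max(0,\cdot)$ from cancelling trivially across places), then forces $\phi_V(\bfu_v)=0$ for every maximal Jordan subspace $V$ and every archimedean place $v$. Fix an archimedean $v_0$: the resulting $\Qbar$-linear relations among $\log\|x_1\|_{v_0},\ldots,\log\|x_N\|_{v_0}$ form a $\Gal(\Qbar/\QQ)$-stable subspace of $(\Qbar^N)^*$, by Baker's theorem on linear forms in logarithms, since any nontrivial $\Qbar$-relation among these particular logarithms forces a genuine multiplicative relation among the $x_j$ that is Galois-equivariant. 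Hence the $\Qbar$-span of the functionals $\s(\phi_V)$, as $V$ ranges over maximal Jordan subspaces and $\s$ over $\Gal(\Qbar/\QQ)$, descends to a $\QQ$-subspace $W\subset(\QQ^N)^*$ of dimension at most $\rbar(A)$.

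Finally, for any $(b_1,\ldots,b_N)\in W\cap\ZZ^N$ the element $x_1^{b_1}\cdots x_N^{b_N}$ has absolute value $1$ at $v_0$; repeating the argument at every archimedean place and invoking the product formula forces it to have absolute value $1$ at every place, so by Kronecker's theorem it is a root of unity. The collection of such relations cuts out an algebraic subgroup $G\subset\GG_m^N$ of codimension $\dim_\QQ W\le\rbar(A)$ that contains $P$ up to torsion, giving the desired containment $P\in G(\Qbar)^\div$ with $\dim G\ge N-\rbar(A)$. The main obstacle is the compactness/positivity step: when $A$ has several complex eigenvalues of maximal modulus the oscillating phases require a careful choice of subsequence, and showing that the vanishing of the limiting nonnegative maximum forces each $\phi_V(\bfu_v)$ to vanish individually (rather than only some nonnegative linear combination of them) is the principal piece of linear algebra that must be performed before Baker's theorem can be applied to upgrade the resulting $\Qbar$-relations to the Galois-invariant $\QQ$-relations needed for the final dimension count.
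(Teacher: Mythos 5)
Your proposal follows essentially the same strategy as the paper's proof: extract a limiting matrix by compactness, use the product formula to force vanishing of a sum of non-negative terms, analyze the kernel of the limit via Jordan blocks, invoke Baker's theorem for the archimedean Galois descent, and finish with Kronecker's theorem. The linear-algebra step you flag as an ``obstacle'' --- that each $\phi_V(\bfu_v)$ vanishes individually rather than only some non-negative combination --- is exactly what the paper handles by setting $B=\lim_{n\in\Ncal}n^{-\ell}\rho^{-n}A^n$ and computing $\Ker_\CC(B)$ block by block; the concern dissolves once one sees that the product formula, applied to each row of $B$, gives the exact identity $B\log\|P\|_v=0$ for every $v$, not merely a one-sided max condition.

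There is, however, a genuine gap in your last step. You establish $\phi_V(\bfu_v)=0$ only at archimedean $v$, and then assert that ``invoking the product formula forces [$x_1^{b_1}\cdots x_N^{b_N}$] to have absolute value $1$ at every place.'' That inference is false: the product formula controls only the product of all local absolute values, so knowing $\|\alpha\|_v=1$ at every archimedean $v$ does not force $\|\alpha\|_v=1$ at finite $v$. For instance $\alpha=(3+4i)/5\in\QQ(i)$ has $\|\alpha\|_\infty=1$ while $\|\alpha\|_{(2+i)}$ and $\|\alpha\|_{(2-i)}$ are $\ne1$, and $\alpha$ is not a root of unity, so Kronecker's theorem would not apply and your argument breaks down there. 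The repair --- which is how the paper proceeds --- is to note that the key vanishing $B\log\|P\|_v=0$, hence $\log\|P\|_v\in\Ker_\CC(B)$, holds at \emph{every} place $v\in M_K$, archimedean and non-archimedean alike. Baker's theorem is needed only for archimedean $v$; at a non-archimedean $v$ each $\log\|x_i\|_v$ is a rational multiple of $\log p_v$, so $\log\|P\|_v$ is already a scalar multiple of a vector in $\QQ^N$ and the passage from $\Perp_\Qbar$ to $\Perp_\QQ$ is trivial. Once the same $\QQ$-rational lattice $W\cap\ZZ^N$ is known to annihilate $\log\|P\|_v$ at \emph{every} place, you genuinely have $\bigl\|x_1^{b_1}\cdots x_N^{b_N}\bigr\|_v=1$ for all $v$, and Kronecker's theorem gives a root of unity as desired.
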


\begin{remark}
The proof of Theorem~\ref{theorem:linrelsonlogP} describes explicitly
how to construct the group~$G$ from the matrix~$A$.
\end{remark}

Theorem~\ref{theorem:linrelsonlogP} has a number of interesting
corollaries.

\begin{corollary}
\label{corollary:ht0orbitnotdense}
Let $\f:\GG_m^N\to\GG_m^N$ be a monomial map with $\d_\f>1$, and
let~$P\in\GG_m^N(\Qbar)$ be a point with $\hhat_\f(P)=0$.  Then there
is a proper algebraic subgroup $G\subsetneq\GG_m^N$ with
$\Orbit_\f(P)\subset G$.  In particular, the orbit~$\Orbit_{\f}(P)$ is
not Zariski dense in~$\GG_m^N$.
\end{corollary}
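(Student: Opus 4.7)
The plan is to apply Theorem~\ref{theorem:linrelsonlogP} to obtain a proper algebraic subgroup $G$ whose divisible hull contains $P$, and then to use the $\f_A$-invariance of that $G$ to replace the ``divisible hull'' $G(\Qbar)^\div$ by a single proper algebraic subgroup $H$ containing the entire forward orbit of $P$.

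First, since $\d_{\f_A}>1$, Theorem~\ref{theorem:linrelsonlogP} provides a proper algebraic subgroup $G\subsetneq\GG_m^N$ such that $\{Q\in\GG_m^N(\Qbar):\hhat_{\f_A}(Q)=0\}\subset G(\Qbar)^\div$. By Proposition~\ref{proposition:canhtprops}(b), the canonical height is $\d_{\f_A}$-homogeneous along orbits, so $\hhat_{\f_A}(\f_A^k(P))=\d_{\f_A}^k\hhat_{\f_A}(P)=0$ for every $k\ge0$. Hence the full orbit satisfies $\Orbit_{\f_A}(P)\subset G(\Qbar)^\div$. In particular, there is an integer $n\ge1$ with $P^n\in G(\Qbar)$, so $P$ lies in the algebraic subgroup
\[
  H:=\{Q\in\GG_m^N:Q^n\in G\},
\]
the preimage of $G$ under the $n$-th power isogeny of $\GG_m^N$. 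Since this isogeny is surjective with finite kernel, $\dim H=\dim G<N$, so $H$ is a proper algebraic subgroup of $\GG_m^N$ containing $P$.

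To upgrade ``$P\in H$'' to ``$\Orbit_{\f_A}(P)\subseteq H$'', I would invoke the fact that the group $G$ furnished by Theorem~\ref{theorem:linrelsonlogP} is $\f_A$-invariant: in the proof of that theorem, $G$ is built from an $A$-stable decomposition of $\Qbar^N$ into generalized eigenspaces, sorting those of maximal spectral contribution from the rest, so by construction $\f_A(G)\subseteq G$. Given this, the subgroup $H$ is also $\f_A$-invariant, since for any $Q\in H$ we compute
\[
  \f_A(Q)^n=\f_A(Q^n)\in\f_A(G)\subseteq G,
\]
i.e., $\f_A(Q)\in H$. Induction on $k$ then gives $\Orbit_{\f_A}(P)\subseteq H$, which is the desired proper algebraic subgroup; the ``in particular'' statement on Zariski density is immediate.

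The main obstacle is the $\f_A$-invariance of $G$, which is precisely what allows a single integer $n$ to witness $\f_A^k(P)^n\in G$ uniformly in $k$. Without this invariance, the witnesses $n_k$ realizing $\f_A^k(P)^{n_k}\in G$ could grow without bound along the orbit, and the set $G(\Qbar)^\div$ itself is not contained in any proper algebraic subgroup of $\GG_m^N$ (it is Zariski dense in $\GG_m^N$ as soon as $G$ is nontrivial, by density of torsion). Fortunately, the explicit construction of $G$ from the Jordan structure of $A$ in the proof of Theorem~\ref{theorem:linrelsonlogP} automatically delivers an $A$-stable, hence $\f_A$-invariant, subgroup, reducing the step above to the trivial observation that group endomorphisms preserve $n$-torsion.
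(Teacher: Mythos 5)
Your argument is correct, but it takes a genuinely different route from the paper at the key step, which (as you rightly identify) is making the exponent $n$ witnessing $\f_A^k(P)^n\in G$ uniform in $k$. You achieve uniformity by invoking the $\f_A$-invariance of the group $G$ produced by Theorem~\ref{theorem:linrelsonlogP}; the paper instead achieves it via the field of definition: since every point of $\Orbit_\f(P)$ lies in $\GG_m^N(K)$ for the fixed number field $K=\QQ(P)$, and each character $\bfe\in L$ sends such a point to a root of unity lying in $K$, one may take $n=d$ to be the number of roots of unity in $K$, so that $\Orbit_\f(P)^d\subset G_L$ and hence $\Orbit_\f(P)\subset G_{dL}$. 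The paper's route has the advantage of using only the \emph{statement} of Theorem~\ref{theorem:linrelsonlogP}, whereas yours requires reopening its proof to extract the invariance of $G=G_L$. That invariance is true, but your justification (``built from an $A$-stable decomposition\dots so by construction $\f_A(G)\subseteq G$'') elides a duality you should make explicit: $G_L$ is the common kernel of the characters $\bfe\in L$, and $\f_A(X)^\bfe=X^{\bfe A}$, so $\f_A(G_L)\subseteq G_L$ is equivalent to $LA\subseteq L$. This does follow from the construction --- $U=W_1\dotplus\cdots\dotplus W_r\dotplus Z$ is $A$-invariant, hence $\Perp_\Qbar(U)$ is stable under $\bfb\mapsto\bfb A$ because $(\bfb A)\cdot\bfu=\bfb\cdot(A\bfu)$, and this stability passes to the Galois-symmetrized space $Y$ and to $L=Y\cap\ZZ^N$ since $A$ is an integer matrix --- but the transpose bookkeeping is the one place where a reader could go wrong, so it deserves a line. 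With that supplied, your induction $\f_A(Q)^n=\f_A(Q^n)\in\f_A(G)\subseteq G$ is sound, and your closing observation that $G(\Qbar)^\div$ is itself Zariski dense (so cannot be used directly) correctly pinpoints why some such uniformity device is indispensable.
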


\begin{remark}
It should be possible to use an effective form of Baker's theorem to
prove effective versions of Theorem~\ref{theorem:linrelsonlogP} and
Corollary~\ref{corollary:ht0orbitnotdense}.  Thus for example, the
proofs should yield an effective constant $C=C\bigl(A,h(P)\bigr)>0$
such that
\[
  \text{$\Orbit_\f(P)$ Zarisiki dense}
  \quad\Longrightarrow\quad
  \hhat_{\f_A}(P)> C.
\]
Presumably the constant~$C$ computed in this way is very small if the
coefficients of~$A$ or the height of~$P$ is large.  It is an
interesting question as to whether a Lehmer-type estimate holds, e.g.,
is it possible to take $C=C'h(P)^{-k}$ for constants~$C'$
and~$k$ that depend only on~$A$?  (Maybe even with~$k$ depending only
on~$N$?)
\end{remark}

For monomial maps whose associated matrices have irreducible 
characteristic polynomial, we can say more. 

\begin{corollary}
\label{corollary:irredcharpoly}
Let~$\f:\GG_m^N\to\GG_m^N$ be a monomial map with dynamical
degree~$\d_\f>1$ defined by a matrix $A\in\Mat_N^+(\ZZ)$ whose
characteristic polynomial is irreducible over~$\QQ$. Let
$P\in\GG_m^N(\Qbar)$. Then
\[
  \hhat_{\f}(P)=0\quad\Longleftrightarrow\quad P\in\PrePer(\f).
\]
\end{corollary}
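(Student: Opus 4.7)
My plan is to prove the equivalence $\hhat_{\f_A}(P)=0\Leftrightarrow P\in\PrePer(\f_A)$ by combining Corollary~\ref{corollary:ht0orbitnotdense} with an irreducibility argument on characters. The reverse implication is immediate from Proposition~\ref{proposition:canhtprops}(c), so the content lies entirely in the forward direction; suppose then that $\hhat_{\f_A}(P)=0$.

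First I would invoke Corollary~\ref{corollary:ht0orbitnotdense} to place the entire orbit $\Orbit_{\f_A}(P)$ inside a proper algebraic subgroup $G\subsetneq\GG_m^N$. Because the connected component $G^0$ is a proper subtorus of $\GG_m^N$, I can select a nonzero vector $\mathbf{m}\in\ZZ^N$ for which the character
\[
  \chi_{\mathbf{m}}(x_1,\ldots,x_N)=x_1^{m_1}\cdots x_N^{m_N}
\]
is trivial on $G^0$; the restriction $\chi_{\mathbf{m}}|_G$ then factors through the finite quotient $G/G^0$ and takes only root-of-unity values. Applying this to each $\f_A^n(P)\in G$ and using the identity $\chi_{\mathbf{m}}\circ\f_A^n=\chi_{(A^t)^n\mathbf{m}}$, which follows directly from the definition of $\f_A$, I conclude that $\chi_{(A^t)^n\mathbf{m}}(P)$ is a root of unity for every $n\ge 0$.

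The key step in the plan is now a linear-algebra observation. Since the characteristic polynomial of $A^t$ is irreducible of degree $N$ over $\QQ$, the $A^t$-cyclic subspace $\QQ[A^t]\mathbf{m}$ generated by the nonzero vector $\mathbf{m}$ must equal all of $\QQ^N$, so $\mathbf{m},A^t\mathbf{m},\ldots,(A^t)^{N-1}\mathbf{m}$ form a $\QQ$-basis of $\QQ^N$; as integer vectors they therefore span a sublattice $L\subset\ZZ^N$ of finite index $k:=[\ZZ^N:L]$. For every $\mathbf{v}\in\ZZ^N$ the vector $k\mathbf{v}$ lies in $L$, which forces $\chi_{\mathbf{v}}(P)^k=\chi_{k\mathbf{v}}(P)$ to be a product of root-of-unity values of the $\chi_{(A^t)^n\mathbf{m}}(P)$ and hence a root of unity; consequently $\chi_{\mathbf{v}}(P)$ itself is a root of unity. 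Taking $\mathbf{v}=e_i$ shows that every coordinate $x_i$ of $P$ is a root of unity, so $P\in\GG_m^N(\Qbar)_\tors$.

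To finish, I would apply Proposition~\ref{proposition:monmapprops}(d) to upgrade torsion to preperiodic, and verify its hypothesis that no eigenvalue of $A$ is a root of unity: if one were, then by irreducibility all its Galois conjugates, namely all eigenvalues of $A$, would be roots of unity, forcing $\rho(A)=1$ and contradicting $\d_{\f_A}>1$. The main obstacle in this plan is the preceding linear-algebra step; without irreducibility the cyclic $A^t$-orbit of $\mathbf{m}$ need only span a proper subspace of $\QQ^N$, so one could not force every coordinate of $P$ to be a root of unity and would recover at best the weaker containment given by Theorem~\ref{theorem:linrelsonlogP}.
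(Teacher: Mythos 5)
Your proof is correct, but it takes a genuinely different route from the paper's. The paper proceeds directly from the quantitative part of Theorem~\ref{theorem:linrelsonlogP}: irreducibility of $\det(T-A)$ forces all $N$ Jordan subspaces to be $1$-dimensional and pairwise Galois conjugate, so $\rbar(A)=N$, hence $\dim G=N-\rbar(A)=0$ and $G(\Qbar)^{\div}=\GG_m^N(\Qbar)_\tors$; thus $\hhat_\f(P)=0$ already places $P$ among the torsion points, and the paper finishes directly (the coordinates of $\f^n(P)$ are roots of unity in the fixed number field $\QQ(P)$, hence finitely many in number). Your argument instead uses only the qualitative Corollary~\ref{corollary:ht0orbitnotdense} (the orbit lies in a proper algebraic subgroup $G\subsetneq\GG_m^N$), picks a nontrivial character $\chi_{\bfm}$ killing $G^0$, propagates it along the orbit via the identity $\chi_{\bfm}\circ\f_A^n=\chi_{(A^t)^n\bfm}$, and then exploits irreducibility in the form of the cyclicity of $\QQ^N$ as a $\QQ[A^t]$-module: since $\bfm,A^t\bfm,\dots,(A^t)^{N-1}\bfm$ span a finite-index sublattice, every character value $\chi_{\bfv}(P)$ is a root of unity, so $P$ is torsion, and Proposition~\ref{proposition:monmapprops}(d) (whose hypothesis you correctly verify from $\d_\f>1$) upgrades this to $P\in\PrePer(\f)$. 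What the paper's version buys is brevity, because the dimension count was already proved in Theorem~\ref{theorem:linrelsonlogP}; what your version buys is that it never needs the precise formula $\dim G\ge N-\rbar(A)$ and would work verbatim starting from any result that merely places the orbit in a proper subgroup. Both are valid, and your self-diagnosis at the end—that the cyclic-module step is exactly where irreducibility enters—is accurate.
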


We next describe the set of arithmetic degrees for a monomial map.

\begin{definition}
For a polynomial~$f(T)\in\CC[T]$, we write
\[
  \rho(f) = \max\bigl\{|\a| : \text{$\a\in\CC$ is a root of $f$}\bigr\}.
\]
With this notation, the spectral radius of a matrix~$A\in\Mat_N(\CC)$
is $\rho(A)=\rho\bigl(\det(T-A)\bigr)$.
\end{definition}

\begin{corollary}
\label{corollary:algdegformonmap}
Let~$\f:\GG_m^N\to\GG_m^N$ be a monomial map defined by a matrix
$A\in\Mat_N^+(\ZZ)$, and let $f_1(T),\ldots,f_s(T)\in\ZZ[T]$ be the
monic irreducible factors of the characteristic polynomial~$\det(T-A)$
of~$A$.  Then
\[
  \bigl\{\a_{\f}(P) : P \in\GG_m^N(\Qbar)\bigr\}
  = \bigl\{1,\rho(f_1),\rho(f_2),\ldots,\rho(f_s)\bigr\}.
\]
In particular, for every $P\in\GG_m^N(\Qbar)$ the algebraic
degree~$\a_{\f}(P)$ is an algebraic integer, and~$\a_{\f}(P)$ takes on
only finitely many values as~$P$ ranges over~$\GG_m^N(\Qbar)$.
\end{corollary}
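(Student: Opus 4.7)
The plan is to prove the set equality in two directions: for the containment $\subseteq$, by induction on $N$; for $\supseteq$, by constructing a point achieving each value. Three earlier results drive the argument: Proposition~\ref{proposition:canhtprops}(d) (if $\hhat_\f(P) > 0$ then $\a_\f(P) = \d_\f$), Corollary~\ref{corollary:ht0orbitnotdense} (if $\hhat_\f(P) = 0$ and $\d_\f > 1$ then $\Orbit_\f(P)$ lies in a proper algebraic subgroup), and Proposition~\ref{proposition:monmapprops}(c) ($\d_{\f_A} = \rho(A) = \max_i \rho(f_i)$). If $\rho(A) = 1$ then the target set collapses to $\{1\}$ and the claim is immediate, so I may assume $\rho(A) > 1$.

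For the upper bound, the base case $N = 1$ is a direct computation, since $\f_A$ is then a power map on $\PP^1$. For the inductive step, if $\hhat_\f(P) > 0$, Proposition~\ref{proposition:canhtprops}(d) gives $\a_\f(P) = \max_i \rho(f_i)$. Otherwise $\hhat_\f(P) = 0$, so by Corollary~\ref{corollary:ht0orbitnotdense} the orbit lies in a proper algebraic subgroup $G \subsetneq \GG_m^N$, which decomposes as $G = H \cdot F$ with $H = G^0$ a subtorus and $F$ a finite group of torsion points. After replacing $\f$ by an iterate $\f^k = \f_{A^k}$ that fixes each component of $G$, the $\f^k$-orbit of $P$ lies in a single translate $tH$; writing $P = tQ_0$ with $Q_0 \in H$, one obtains $h(\f^{nk}(P)) = h(\f^{nk}(Q_0))$ exactly, using that coordinates of torsion points have absolute value $1$ at every place of $\Qbar$. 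This identifies $\a_\f(P)^k = \a_{\f^k}(P)$ with $\a_{\f^k|_H}(Q_0)$ for the monomial map $\f^k|_H$ on $H$, whose associated matrix $B$ satisfies $\det(T-B) \mid \det(T-A^k)$. By induction, $\a_{\f^k|_H}(Q_0)$ lies in $\{1, \rho(g_1), \ldots, \rho(g_r)\}$ where $g_1, \ldots, g_r$ are the irreducible factors of $\det(T-B)$. Each $\rho(g_j)$ equals $\rho(f_i)^k$ for some $i$, since the roots of $g_j$ are precisely the $k$th powers of the roots of some $f_i$; so taking $k$th roots returns to the target set.

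For the lower bound, $\a_\f(P) = 1$ is realized by any torsion $P$, which is preperiodic by Proposition~\ref{proposition:monmapprops}(d). For each $i$ with $\rho(f_i) > 1$, consider the primary decomposition $\QQ^N = \bigoplus_j V_j$ into $A$-invariant subspaces with $A|_{V_j}$ having characteristic polynomial $f_j^{m_j}$. Via the torus--lattice correspondence, $V_i$ determines an $A$-invariant subtorus $T_i \subset \GG_m^N$ on which $\f_A$ restricts to a monomial map with characteristic polynomial $f_i^{m_i}$ and dynamical degree $\rho(f_i)$. Applying Theorem~\ref{theorem:linrelsonlogP} to $\f_A|_{T_i}$ shows that the points of canonical height zero on $T_i$ form a proper algebraic subgroup, so a generic point $P \in T_i(\Qbar)$ satisfies $\hhat_{\f_A|_{T_i}}(P) > 0$, and Proposition~\ref{proposition:canhtprops}(d) yields $\a_\f(P) = \rho(f_i)$. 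Here it matters that heights on $T_i$ computed via different embeddings differ by bounded multiplicative factors, which disappear in the limit defining $\a$.

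The main obstacle is the reduction to $H$ in the upper bound: the identification $tH \cong H$ by $tQ \mapsto Q$ conjugates $\f^k$ to the map $\psi(Q) = h_0 \cdot \f^k(Q)$ on $H$, where $h_0 := t^{-1} \f^k(t) \in H$. One must verify that $h_0$ is itself torsion (which follows from $t$ and $\f^k(t)$ being torsion points in the same coset $tH$, together with $H$ containing the torsion of that coset) and that the iterated twists $\f^{ik}(h_0)$ remain torsion for all $i$. This lets one write $\psi^n(Q_0) = (\text{torsion}) \cdot \f^{nk}(Q_0)$ and deduce the exact height identity $h(\psi^n(Q_0)) = h(\f^{nk}(Q_0))$ on which the induction rests.
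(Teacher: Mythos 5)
Your argument is correct in substance, but on the containment $\subseteq$ it takes a genuinely different route from the paper's. The paper avoids induction entirely: it takes $G$ to be the \emph{smallest} algebraic subgroup containing $\Orbit_\f(P)$, reduces to the connected case by replacing $P$ with $P^d$ where $d=(G:G^0)$ (using $\a_\f(P^d)=\a_\f(P)$, which replaces the point rather than the map and so avoids iterates, cosets, and $k$th roots altogether), and then observes that minimality of $G$ together with Corollary~\ref{corollary:ht0orbitnotdense} forces $\hhat_{\f|_G}(P)>0$; Proposition~\ref{proposition:canhtprops}(d) then gives $\a_\f(P)=\d_{\f|_G}=\rho(A|_{L^\perp\otimes\QQ})$ in one step, and this spectral radius is the largest root of a rational factor of $\det(T-A)$, hence equals some $\rho(f_i)$. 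Your induction on dimension works too, and your $k$th-root bookkeeping is sound: if $g_j$ is an irreducible factor of $\det(T-B)\mid\det(T-A^k)$ with root $\l^k$, then its roots are exactly the $k$th powers of the Galois orbit of $\l$, so $\rho(g_j)=\rho(f_i)^k$ as you claim. Two small repairs are needed. First, Corollary~\ref{corollary:ht0orbitnotdense} only produces \emph{some} proper subgroup containing the orbit; for $\f$ to act on the set of components of $G$ you must take $G$ minimal (the minimal such subgroup is $\f$-invariant, which the paper verifies separately). Second, since $\det A\ne0$ does not make the induced map on $G/G^0$ injective, no iterate need fix every component; but the component containing $\f^n(P)$ is eventually periodic, so after discarding finitely many iterates (which does not change $\a_\f(P)$) your coset argument goes through. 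The containment $\supseteq$ is essentially the paper's construction: your primary subtorus $T_i$ is exactly the paper's $G=g(T)\cdot\GG_m^N$ for $\det(T-A)=f_i^{e}g(T)$, and both proofs invoke a point of $T_i$ with Zariski dense orbit together with Corollary~\ref{corollary:ht0orbitnotdense} and Proposition~\ref{proposition:canhtprops}(d) to realize $\rho(f_i)$.
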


All rational maps have the property that if $\hhat_\f(P)>0$, then
$\a_\f(P)=\d_\f$; see Proposition~\ref{proposition:canhtprops}(d). For
monomial maps whose associated matrices are diagonalizable, we can prove
the converse. We note that some restriction is necessary, since it is
easy to construct non-diagonalizable monomial maps for which
Corollary~\ref{corollary:hgt0iffaeqd} is false. Indeed, the map in
Example~\ref{example:hteqinfty} applied to the point~$P=[1,2,1]$
provides an example with $\hhat_\f(P)=0$ and $\a_\f(P)=\d_\f$. (I thank
Mattias Jonsson for this last observation.)

\begin{corollary}
\label{corollary:hgt0iffaeqd}
Let $A\in\Mat_N^+(\ZZ)$ be a matrix that is diagonalizable over~$\CC$,
and assume that the associated monomial map
$\f:\PP^N\dashrightarrow\PP^N$ satisfies $\d_{\f}>1$. Let
$P\in\GG_m^N(\Qbar)$. Then
\[
  \hhat_{\f}(P)>0
  \quad\Longleftrightarrow\quad
  \a_{\f}(P)=\d_{\f}.
\]
\end{corollary}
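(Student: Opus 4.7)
The forward implication $\hhat_\f(P) > 0 \Rightarrow \a_\f(P) = \d_\f$ is Proposition~\ref{proposition:canhtprops}(d), so it remains to prove its contrapositive for the converse: assuming $\hhat_\f(P) = 0$, I would show $\a_\f(P) < \d_\f$. Diagonalizability enters through Theorem~\ref{theorem:ellinvariantmonomap}: the largest Jordan block of $A$ has size $1$, so $\ell(A) = 0$, $\deg(\f_A^n) \asymp \rho(A)^n$, and
\[
  \hhat_\f(P) = \limsup_{n \to \infty} \rho(A)^{-n}\, h\bigl(\f_A^n(P)\bigr).
\]

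Decompose $\CC^N = V_{\max} \oplus V_<$ into $A$-invariant subspaces, where $V_{\max}$ is spanned by the eigenvectors for eigenvalues of magnitude $\rho(A)$ and $V_<$ by the rest, and set $\rho_< = \max\{|\l| : \l \text{ is an eigenvalue of } A,\ |\l| < \rho(A)\} < \rho(A)$. Diagonalizability yields a constant $C_A$ with $\|A^n \mathbf{v}\|_\infty \leq C_A \rho_<^n \|\mathbf{v}\|_\infty$ for all $\mathbf{v} \in V_<$ and all $n \geq 1$. By Theorem~\ref{theorem:linrelsonlogP}, $\hhat_\f(P) = 0$ forces $P \in G(\Qbar)^\div$ for an algebraic subgroup $G \subset \GG_m^N$ whose character lattice is constructed in the proof from a Galois-stable system of left eigenvectors of $A$ attached to maximal Jordan subspaces; in the diagonalizable case these are exactly the left eigenvectors for eigenvalues of magnitude $\rho(A)$. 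Since roots of unity have $v$-adic absolute value $1$ at every place, passing to the divisible hull is harmless, and the vanishing of every character of $G$ on $P$ forces the log-vector
\[
  \mathbf{v}_{P,v} := \bigl(\log\|x_1\|_v, \ldots, \log\|x_N\|_v\bigr) \in \RR^N
\]
to be annihilated by every maximal left eigenvector functional. Hence $\mathbf{v}_{P,v}$ lies in the Galois intersection of $V_<$, and in particular in $V_<$, at every place $v$.

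Combining these inputs, $\|A^n \mathbf{v}_{P,v}\|_\infty \leq C_A \rho_<^n \|\mathbf{v}_{P,v}\|_\infty$ at each place; summing $\max_i \log^+\|\cdot\|_v$ over all places of the field of definition of $P$ (and using the product formula to bound $\sum_v \|\mathbf{v}_{P,v}\|_\infty$ by a constant multiple of $h(P)$) yields
\[
  h\bigl(\f_A^n(P)\bigr) \leq C'\, \rho_<^n\, h(P),
\]
so $\a_\f(P) = \limsup h(\f^n(P))^{1/n} \leq \rho_< < \d_\f$. The main obstacle is the structural claim just used: one must inspect the proof of Theorem~\ref{theorem:linrelsonlogP} to verify that the subgroup $G$ it produces is cut out precisely by characters coming from maximal left eigenvectors of $A$, so that $P \in G(\Qbar)^\div$ really forces $\mathbf{v}_{P,v} \in V_<$ at every place. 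Diagonalizability is indispensable: in its absence one can have $\ell_\f \geq 1$, and Jordan-block polynomial factors may inflate $h(\f^n(P))$ back up to order $\rho(A)^n$ while keeping $\hhat_\f(P) = 0$, as happens in Example~\ref{example:hteqinfty}.
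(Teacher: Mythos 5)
Your proof is correct, but it takes a genuinely different route from the paper's. The paper argues by comparing two canonical heights: continuing with the notation from the proof of Corollary~\ref{corollary:algdegformonmap}, it lets $G_L$ be the \emph{smallest} algebraic subgroup containing $\Orbit_\f(P)$, recalls the identity $\a_\f(P)=\a_{\f_L,G_L}(P)=\rho(A_L^\perp)=\d_{\f_L}$ for the restricted map, uses Corollary~\ref{corollary:ht0orbitnotdense} (applied on $G_L$) to conclude $\hhat_{\f_L,G_L}(P)>0$ while $\hhat_\f(P)=0$, and then, since diagonalizability makes both canonical heights limsups of $\d^{-n}h\bigl(\f^n(P)\bigr)$ with the \emph{same} numerators, deduces $\d_{\f_L}<\d_\f$ and hence $\a_\f(P)<\d_\f$. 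You instead re-enter the proof of Theorem~\ref{theorem:linrelsonlogP} and extract the linear-algebra statement that $\log\|P\|_v$ lies in the span $V_{<}$ of the eigenvectors with $|\l|<\rho(A)$; this is legitimate, and in fact your "main obstacle" is already recorded there as~\eqref{eqn:logPvinKerCB}, since in the diagonalizable case each $W_i=0$ and $\Ker_\CC(B)=V_{<}\otimes_{\Qbar}\CC$ — so your detour through the subgroup $G$, its divisible hull, and Baker's theorem is more roundabout than necessary for this step. What your route buys is the explicit bound $\a_\f(P)\le\rho_{<}$, where $\rho_{<}$ is the second-largest eigenvalue magnitude, at the cost of redoing a height estimate that the paper avoids by reusing $\a_\f(P)=\rho(A_L^\perp)$. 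Two minor points to tidy in your write-up: handle the degenerate case $V_{<}=0$ (then $\log\|P\|_v=0$ for all $v$, so $P$ has root-of-unity coordinates and $\a_\f(P)=1$), and since $\a_\f(P)\ge1$ by convention the correct conclusion of your estimate is $\a_\f(P)\le\max\{1,\rho_{<}\}<\d_\f$, which still suffices.
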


%%%%%%%%%%%%%%%%%%%%%%%%%%%%%%%%%%%%%%%%%%%%%%%%%%%%%%%%%%%%%%%%%%%%%%
\section{Proof of Theorem~\ref{theorem:linrelsonlogP}}
\label{section:proofmainthm}
%%%%%%%%%%%%%%%%%%%%%%%%%%%%%%%%%%%%%%%%%%%%%%%%%%%%%%%%%%%%%%%%%%%%%%

In this section we give the  proof of Theorem~\ref{theorem:linrelsonlogP}.
We start with some additional notation.

\begin{definition}
By definition, all of the maximal Jordan subspaces for a matrix~$A$
have the same dimension. We let
\[
  \ell(A) = \dim(\text{any maximal Jordan subspace}) - 1.
\]
We remark that if~$\f_A$ is the monomial map associated to a
matrix~$A\in\Mat_N^+(\ZZ)$ and~$\ell_{\f_A}$ is the associated degree
growth exponent defined in
Conjecture~\ref{conjecture:dyndegtimespoly}, then
Proposition~\ref{theorem:ellinvariantmonomap} implies
that~$\ell(A)=\ell_{\f_A}$, so our use of~$\ell$ for two seemingly
different purposes is consistant.
\end{definition}

\begin{definition}
Let $K/F$ be an extension of fields, and let $W\subset K^N$ be a set
of vectors. We write
\[
  \Perp_F(W) = \{\bfb\in F^N : \bfb\cdot\bfw=0~\text{for all}~\bfw\in W \}
\]
for the subspace of~$F^N$ that is orthogonal to~$W$.  When
$W=\{\bfw\}$ consists of a single vector, we write
\[
  \Perp_F(\bfw) = \{\bfb\in F^N : \bfb\cdot\bfw=0 \}
\]
for $\Perp_F\bigl(\{\bfw\}\bigr)$.  Assuming that $\bfw\ne\bfzero$, we
note that $\Perp_K(\bfw)$ is simply a hyperplane in~$K^N$, but that in
general the~$F$-vector space $\Perp_F(\bfw)$ may have dimension
anywhere from~$0$ to~$N-1$.
\end{definition}

The following elementary facts will be useful.

\begin{lemma}
\label{lemma:perpKVFK}
Let $K/F$ be an extension of fields.
\begin{parts}
\Part{(a)}
Let $U\subset F^N$ be an $F$-vector subspace. Then
\[
  \Perp_K(U\otimes_F K) = \Perp_F(U)\otimes_F K.
\]
\Part{(b)}
Assume that $K/F$ is Galois, and let $V\subset K^N$ be a
$K$-vector subspace that is~$\Gal(K/F)$-invariant. 
Then $Y=V\cap F^N$ is the unique $F$-vector subspace of~$F^N$
satisfying $V=Y\otimes_FK$.
\Part{(c)}
Let $V\subset F^N$ be an $F$-vector subspace. Then
\[
  \Perp_F\bigl(\Perp_F(V)\bigr) = V.
\]
\Part{(d)}
Let $V_1,\ldots,V_t\subset F^N$ be $F$-vector subspaces. Then
\[
  \Perp_F(V_1) + \dots + \Perp_F(V_t)
  = \Perp_F(V_1\cap\cdots\cap V_t).
\]
\end{parts}
\end{lemma}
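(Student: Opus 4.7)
For part~(a), the plan is to pick an $F$-basis $u_1,\ldots,u_m$ of $U$ and observe that both $\Perp_F(U)$ and $\Perp_K(U\otimes_F K)$ are the solution sets in $F^N$ and $K^N$ respectively of the homogeneous linear system $\bfb\cdot u_j=0$, $1\le j\le m$. Since the coefficient matrix of this system lies in $F$, its rank is the same whether computed over $F$ or over $K$, so both solution spaces have the same dimension $N-r$. The inclusion $\Perp_F(U)\otimes_F K\subseteq \Perp_K(U\otimes_F K)$ is immediate from $F$-linearity of the pairing, and equality of dimensions then forces equality.

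For part~(b), the plan is a standard Galois descent argument. Write $G=\Gal(K/F)$, and fix an $F$-basis $c_1,\ldots,c_d$ of $K$. Uniqueness of $Y$ is clear once existence is established, and the inclusion $Y\otimes_F K\subseteq V$ is trivial. For the reverse inclusion, given $v\in V$, decompose $v=c_1\bff_1+\cdots+c_d\bff_d$ with $\bff_i\in F^N$. Applying each $\s\in G$ yields
\[
  \s(v)=\s(c_1)\bff_1+\cdots+\s(c_d)\bff_d\in V,
\]
where membership in $V$ uses Galois invariance. The matrix $\bigl(\s(c_j)\bigr)_{\s,j}$ is invertible over $K$ (Dedekind independence of characters / nondegeneracy of the trace pairing), so each $\bff_i$ is an $F$-linear combination of the $\s(v)$, all of which lie in $V$. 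Hence $\bff_i\in V\cap F^N=Y$, giving $v\in Y\otimes_F K$.

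For part~(c), I will invoke the standard fact that the canonical bilinear pairing $F^N\times F^N\to F$ is nondegenerate, so that $\dim\Perp_F(V)=N-\dim V$ for every subspace $V\subseteq F^N$. Applying this twice gives $\dim\Perp_F(\Perp_F(V))=\dim V$, and since the inclusion $V\subseteq \Perp_F(\Perp_F(V))$ is immediate from the definition, equality follows.

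For part~(d), the plan is to deduce it formally from~(c) together with the elementary identity
\[
  \Perp_F(W_1+\cdots+W_t)=\Perp_F(W_1)\cap\cdots\cap\Perp_F(W_t),
\]
which is immediate from the definition of the pairing. Setting $W_i=\Perp_F(V_i)$ and applying $\Perp_F$ to both sides, one side becomes $\Perp_F\bigl(\Perp_F(V_1)+\cdots+\Perp_F(V_t)\bigr)$ and the other becomes $\Perp_F\bigl(\Perp_F(V_1)\cap\cdots\cap\Perp_F(V_t)\bigr)=\Perp_F\bigl(\Perp_F(V_1\cap\cdots\cap V_t)\bigr)$ after using~(c) on each factor. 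A final application of~(c) yields the asserted identity. There is no real obstacle in any of these parts; the only point requiring care is the Galois descent in~(b), where one must cite the invertibility of the matrix $(\s(c_j))$ to recover the $F$-coordinates $\bff_i$ inside $V$.
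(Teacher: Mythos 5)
Your proposal is correct and, for part~(d), takes essentially the same route as the paper's appendix: first establish the dual identity $\Perp_F(W_1+\cdots+W_t)=\Perp_F(W_1)\cap\cdots\cap\Perp_F(W_t)$, then substitute $W_i=\Perp_F(V_i)$ and use~(c) to strip double perps. For~(a),~(b),~(c) the paper simply cites them as standard linear algebra and Galois theory, and your supplied arguments (rank invariance under field extension, Dedekind-style descent via invertibility of $(\s(c_j))$, and nondegeneracy of the dot pairing giving $\dim\Perp_F(V)=N-\dim V$) are exactly the standard ones intended.
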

\begin{proof}
(a),~(c), and~(d) are linear algebra exercises, while~(b) is standard
linear algebra and Galois theory;
cf.\ \cite[Lemma~II.5.8.1]{MR2514094}.\EndNote{appendix2}
\end{proof}

The proof of Theorem~\ref{theorem:linrelsonlogP} uses the following
(qualitative) version of Baker's theorem on linear forms in
logarithms.

\begin{theorem}
\label{theorem:baker}
\textup{(Baker's Theorem)}
Let $\a_1,\ldots,\a_n\in\Qbar^*$ be algebraic numbers, and let
\[
  \bfw = \bigl(\log(\a_1),\ldots,\log(\a_n)\bigr)\in\CC^n.
\]
Then
\begin{equation}
  \label{eqn:perpqbareqperpq}
  \Perp_\Qbar(\bfw) \cong \Perp_\QQ(\bfw)\otimes_{\QQ} \Qbar.
\end{equation}
In particular, the $\Qbar$-vector space $\Perp_\Qbar(\bfw)\subset\Qbar^N$
is~$\Gal(\Qbar/\QQ)$-in\-var\-iant.
\end{theorem}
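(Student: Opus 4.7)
The plan is to reduce the isomorphism~\eqref{eqn:perpqbareqperpq} to the classical linear-independence form of Baker's theorem. First I note there is a natural $\Qbar$-linear map
\[
  \Perp_\QQ(\bfw) \otimes_\QQ \Qbar \longrightarrow \Perp_\Qbar(\bfw),\qquad \bfb\otimes c \longmapsto c\bfb,
\]
which is injective because $\Perp_\QQ(\bfw)\subseteq\QQ^n$ and extension of scalars preserves the inclusion $\QQ^n\otimes_\QQ\Qbar=\Qbar^n$. Thus it suffices to prove the dimension equality $\dim_\Qbar \Perp_\Qbar(\bfw) = \dim_\QQ \Perp_\QQ(\bfw)$, which will simultaneously force surjectivity.

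For $F\in\{\QQ,\Qbar\}$, I apply rank-nullity to the $F$-linear map $F^n\to\CC$ given by $\bfb\mapsto\bfb\cdot\bfw$, whose kernel is $\Perp_F(\bfw)$ and whose image is the $F$-span $V_F:=F\langle\log\a_1,\ldots,\log\a_n\rangle\subseteq\CC$. This gives $\dim_F\Perp_F(\bfw)=n-\dim_F V_F$, so it is enough to show $\dim_\Qbar V_\Qbar=\dim_\QQ V_\QQ$. Choose $S\subseteq\{1,\ldots,n\}$ so that $\{\log\a_i : i\in S\}$ is a $\QQ$-basis of $V_\QQ$. Baker's theorem on linear forms in logarithms asserts precisely that a set of logarithms of non-zero algebraic numbers which is linearly independent over $\QQ$ is also linearly independent over $\Qbar$. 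Hence $\{\log\a_i : i\in S\}$ is $\Qbar$-linearly independent, and since every $\log\a_j$ is already a $\QQ$-linear combination of its members, this set also $\Qbar$-spans $V_\Qbar$. Therefore $\dim_\Qbar V_\Qbar=|S|=\dim_\QQ V_\QQ$, which establishes~\eqref{eqn:perpqbareqperpq}.

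The Galois invariance is then immediate from Lemma~\ref{lemma:perpKVFK}(b), since~\eqref{eqn:perpqbareqperpq} exhibits $\Perp_\Qbar(\bfw)$ as the extension of scalars from $\QQ$ to $\Qbar$ of the $\QQ$-subspace $\Perp_\QQ(\bfw)\subseteq\QQ^n$, and any such extension is stable under the coordinate-wise action of $\Gal(\Qbar/\QQ)$ on $\Qbar^n$. The only nontrivial ingredient in this argument is Baker's theorem itself, which is a deep transcendence result; everything else is routine linear-algebraic packaging via rank-nullity. The hard part is thus entirely black-boxed, and I do not expect any technical obstacle beyond correctly invoking the standard statement of Baker's theorem.
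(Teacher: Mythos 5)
Your proof is correct, and it takes a genuinely different—and cleaner—route than the paper's. The paper also reduces to the classical statement of Baker's theorem (a $\QQ$-linearly independent tuple of logarithms of nonzero algebraic numbers is $\Qbar$-linearly independent), but it establishes the general dimension statement by induction on $\dim\Perp_\Qbar(\bfw)$: it writes generators of the space of relations as the rows of a matrix $B$, performs explicit row reduction, deletes a coordinate, applies the inductive hypothesis to produce a rational matrix, then does a second round of row reduction on a different coordinate to produce one more rational relation. Your approach bypasses all of this by observing that $\dim_F\Perp_F(\bfw) = n - \dim_F V_F$ via rank--nullity, so the whole issue collapses to showing $\dim_\QQ V_\QQ = \dim_\Qbar V_\Qbar$; a $\QQ$-basis of $V_\QQ$ is $\Qbar$-independent by Baker and $\Qbar$-spans $V_\Qbar$ tautologically, so the two dimensions agree. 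Combined with the injectivity of the natural map $\Perp_\QQ(\bfw)\otimes_\QQ\Qbar \hookrightarrow \Perp_\Qbar(\bfw)$, this forces equality. Your argument is shorter, avoids the matrix bookkeeping and the coordinate-permutation cases, and makes the dependence on Baker's theorem maximally transparent; the paper's induction is more explicit but not easier to verify. One minor point worth noting: the statement you invoke from Baker is the homogeneous version ($\QQ$-independence implies $\Qbar$-independence of the logarithms themselves), which is a weaker corollary of Baker's full inhomogeneous theorem ($1,\log\a_1,\ldots,\log\a_n$ are $\Qbar$-independent); this is exactly the same form the paper uses, so the black box is identical.
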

\begin{proof}
If $\dim\Perp_\Qbar(\bfw)=0$ or~$1$, then~\eqref{eqn:perpqbareqperpq}
says that
\[
  \log(\a_1),\ldots,\log(\a_n)
\]
are linearly dependent over~$\Qbar$ if and only if they are linearly
dependent over~$\QQ$, which is the usual statement of Baker's theorem;
see~\cite{MR1074572}.  The general case of~\eqref{eqn:perpqbareqperpq}
is then an easy induction on the dimension
of $\Perp_\Qbar(\bfw)$.\EndNote{appendix9}
Finally, the equality~\eqref{eqn:perpqbareqperpq} shows that
$\Perp_\Qbar(\bfw)$ is a~$\Gal(\Qbar/\QQ)$-invariant subspace
of~$\Qbar^N$.
\end{proof}

\begin{proof}[Proof of Theorem $\ref{theorem:linrelsonlogP}$]
To ease notation, we let $\rho=\rho(A)$ and $\ell=\ell(A)=\ell_\f$.
We also note from Proposition~\ref{proposition:monmapprops}(c) that
$\rho=\d_{\f_A}$, so~$\rho>1$. We take~$K/\QQ$ to be a finite Galois
extension containing the coordinates of~$P$ and the eigenvalues
of~$A$.
\par
Writing~$A$ in Jordan normal form, it is easy to see that the
matrices 
\begin{equation}
  \label{eqn:nellrhonAn}
  \bigl\{n^{-\ell}\rho^{-n}A^n : n\ge0\bigr\}
\end{equation}
lie in a bounded subset of~$\Mat_N(\CC)\cong\CC^{N^2}$, so by
compactness, any infinite subsequence of matrices
in~\eqref{eqn:nellrhonAn} has an accumulation point in~$\Mat_N(\CC)$.
\par
For $Q=(y_1,\ldots,y_N)\in K^N$ and $v\in M_K$, we let
\[
  \log\|Q\|_v = \text{the column vector
           ${}^t\left(\log\|y_1\|_v,\ldots,\log\|y_N\|_v\right)$}.
\]
This notation gives the convenient formula
\begin{equation}
  \label{eqn:logfAnPAnP}
  \log\bigl\|\f_A^n(P)\bigr\|_v = A^n\log\|P\|_v.
\end{equation}
Further, for any real vector~$\bfu=(u_1,\ldots,u_N)\in\RR^N$, we let
\[
  \max(\bfu) = \max\{u_1,\ldots,u_N\}
  \quad\text{and}\quad
  \maxplus(\bfu) = \max\{0,u_1,\ldots,u_N\}.
\]
\par
Let $P\in\GG_m^N(\Qbar)$ be a point satisfying~$\hhat_\f(P)=0$, and
let \text{$v\in M_K$}. Our first goal is to show that
$\Perp_\CC\bigl(\log\|P\|_v\bigr)$ contains a non-trivial $\CC$-vector
subspace of~$\CC^N$ that does not depend on~$P$ or~$v$.  Using the
definition of canonical height, we have
\[
  0 = \hhat_\f(P) =
  \limsup_{n\to\infty} \frac{h\bigl(\f_A^n(P)\bigr)}{n^\ell\rho^n}.
\]
We choose an infinite sequence of natural numbers~$\Ncal$
so that the limsup is a limit, i.e., 
\begin{equation}
  \label{eqn:0hfPlim}
  0 = \hhat_\f(P) =
  \lim_{n\in\Ncal} \frac{h\bigl(\f_A^n(P)\bigr)}{n^\ell\rho^n}.
\end{equation}
Replacing~$\Ncal$ with an infinite subsequence, which 
we again denote by~$\Ncal$, we may assume that the limit
\begin{equation} 
  \label{eqn:BeqlimAn}
  B = \lim_{n\in\Ncal} \frac{A^n}{n^\ell \rho^n} 
\end{equation}
also exists. We also note that since~$A$ has integer coefficients, the
coefficients of the matrix~$B$ are real, i.e., $B\in\Mat_N(\RR)$.
This is important because we are about to write down inequalities that
involve the coefficients of~$B$. On the other hand, it need not
be true that the coefficients of~$B$ are algebraic numbers, nor is~$B$
necessarily invertible.
\par
Writing the height as a sum over the places of~$K$, we have
\begin{align*}
  0 
  &= \lim_{n\in\Ncal}  \frac{1}{n^\ell\rho^n}
          \sum_{v\in M_K} \maxplus \left(\log\bigl\|\f_A^n(P)\bigr\|_v \right)
    \quad\text{from \eqref{eqn:0hfPlim},}\\
  &= \lim_{n\in\Ncal}  \frac{1}{n^\ell\rho^n} \sum_{v\in M_K} 
      \maxplus\bigl( A^n\log\|P\|_v \bigr)
    \quad\text{from \eqref{eqn:logfAnPAnP},}\\
  &= \sum_{v\in M_K} \maxplus 
    \left(\lim_{n\in\Ncal} \frac{A^n}{n^\ell\rho^n}\log\|P\|_v\right)\\
  &= \sum_{v\in M_K} \maxplus \bigl(B\log\|P\|_v\bigr)
    \quad\text{from \eqref{eqn:BeqlimAn}.}
\end{align*}
Since this sum of non-negative terms is equal to~$0$, we see that each
individual term must be equal to~$0$. We have thus proven that
\begin{equation}
  \label{eqn:maxpluslogPv0}
  \maxplus \bigl(B\log\|P\|_v\bigr) = 0 \quad\text{for all $v\in M_K$,}
\end{equation}
where~$B=\lim_{n\in\Ncal} n^{-\ell}\rho^{-n}A^n\in\Mat_N(\RR)$ is given
by~\eqref{eqn:BeqlimAn}.
\par
The definition of~$\maxplus$ and~\eqref{eqn:maxpluslogPv0} imply 
that\EndNote{appendix3}
\begin{equation}
  \label{eqn:maxBlogPvle0}
  \max\bigl( B\log\|P\|_v\bigr) \le 0 \quad\text{for all $v\in M_K$.}
\end{equation}
However, summing over~$v\in M_K$ and using the product formula shows
that
\begin{equation}
  \label{eqn:sumvMKmaxBlogPvge0}
  \sum_{v\in M_K} \max \bigl(B\log\|P\|_v\bigr) \ge 0.
\end{equation}
Thus the sum~\eqref{eqn:sumvMKmaxBlogPvge0} is non-negative,
but~\eqref{eqn:maxBlogPvle0} says that every term in the sum is
non-positive. It follow that every term in the
sum~\eqref{eqn:sumvMKmaxBlogPvge0} must vanish, which proves the
key formula
\begin{equation}
  \label{eqn:BlogPveq0}
  B\log\|P\|_v = 0 \quad\text{for all $v\in M_K$.}
\end{equation}
We note that~\eqref{eqn:BlogPveq0} is equivalent to
\begin{equation}
  \label{eqn:logPvinKerCB}
  \log\|P\|_v \in \Ker_\CC(B),
\end{equation}
where $\Ker_\CC(B)\subset\CC^N$ is independent of both~$P$ and~$v$.
\par
Formula~\eqref{eqn:BlogPveq0} says that the row vectors of~$B$
annihilate~$\log\|P\|_v$, but unfortunately the coordinates of~$B$ are
in~$\CC$, and our ultimate goal is to find vectors with
\emph{integer} coordinates that annihilate~$\log\|P\|_v$. To do this, we
study~$B$ and its kernel more closely.
\par  
Let~$V\subset\Qbar^N$ be a Jordan subspce for~$A$, and let $t=\dim
V$. In other words,~$V$ is an~$A$-invariant subspace of~$\Qbar^N$ and
there is a $\Qbar$-basis~$\Vcal$ for~$V$ so that the matrix of~$A|_V$
relative to the basis~$\Vcal$ is
\[
  \left[A|_V\right]_\Vcal 
  = \begin{pmatrix}
    \l & 1 & 0 & \cdots & 0 \\
     0 &\l & 1 & \cdots & 0 \\
     \vdots & & \ddots & \ddots & \vdots \\
     0 & 0 &  & \cdots & \l \\
  \end{pmatrix}
\]
We consider the limiting action of~$n^{-\ell}\rho^{-n}A^n$
on~$V$. There are two cases.
\par
First, if~$V$ is not a maximal Jordan subspace, then
either~$|\l|<\rho$ or~$t\le\ell$, so
\[
  \text{magnitude of largest entry of $\bigl[A^n|_V\bigr]_\Vcal$}
  \le O\bigl(n^{t-1}|\l|^n\bigr) = o(n^{\ell}\rho^n).
\]
Hence if~$V$ is not a maximal Jordan subspace, then
\begin{equation}
  \label{eqn:BonnonmaxJ}
  \lim_{n\to\infty} n^{-\ell}\rho^{-n}A^n|_V = 0.
\end{equation}
\par
Second, suppose that~$V$ is a maximal Jordan subspace, so$|\l|=\rho$
and~$t=\ell+1$.  Let $\Vcal=\{\bfv_1,\ldots,\bfv_t\}$ be the basis
of~$V$ used to put~$\bigl[A^n|_V\bigr]_\Vcal$ into Jordan normal
form, and let
\[
  W = \Qbar\bfv_1 + \cdots + \Qbar\bfv_{t-1} \subset V
\]
be the codimension~$1$ subspace of~$V$ generated by the first~$t-1$
vectors in the basis. Alternatively,
\begin{equation}
  \label{eqn:WeqkerAlt1V}
  W = \Ker \bigl( (A-\l)^{t-1}|_V \bigr) \subset V.
\end{equation}
Then~$W$ is~$A$-invariant, and the magnitude of the largest entry
of the matrix of~$A^n|_W$ relative to the
basis~$\{\bfv_1,\ldots,\bfv_{t-1}\}$ is
\[
  O\bigl(n^{t-2}|\l|^n\bigr)=O(n^{\ell-1}\rho^n)=o(n^\ell\rho^n),
\]
so we find that
\[
  \lim_{n\to\infty} n^{-\ell}\rho^{-n}A^n|_W = 0.
\]
On the other hand, the action of $A^n$ on the generator~$\bfv_t$
of~$V/W$ is given by
\[
  A^n\bfv_t = \l^n\bfv_t + \binom{n}{1}\l^{n-1}\bfv_{t-1}
    + \binom{n}{2}\l^{n-2}\bfv_{t-2}
    + \cdots  + \binom{n}{t-1}\l^{n-(t-1)}\bfv_1.
\]
The final term grows fastest at~$n\to\infty$,
so using the assumption that $t=\ell+1$, we find that
\[
  n^{-\ell}\rho^{-n}A^n\bfv_t
  = \frac{1}{\l^\ell\ell!} \left(\frac{\l}{\rho}\right)^n\bfv_1 + O(n^{-1}).
\]
We recall that~$\Ncal\subset\NN$ is a sequence such that
$n^{-\ell}\rho^{-n}A^n$ converges, so using the fact that~$\l/\rho$
has magnitude~$1$, we see that
\[
  \lim_{n\in\Ncal}   n^{-\ell}\rho^{-n}A^n\bfv_t
  = \frac{\xi}{\rho^\ell\ell!}\bfv_1
  \quad\text{for some $\xi\in\CC$ with $|\xi|=1$.}
\]
In particular, the limit is not~$0$.  Thus the action of~$B$
of~$V\otimes_\Qbar\CC$ satisfies
\[
  B|_{W\otimes_\Qbar\CC} = 0
  \quad\text{and}\quad
  B|_{(V/W)\otimes_\Qbar\CC} \ne 0.
\]
Hence
\begin{equation}
  \label{eqn:KerBVW}
  \Ker(B|_{V\otimes_\Qbar\CC}) = W\otimes_\Qbar\CC,
\end{equation}
where~$W\otimes_\Qbar\CC$ is an~$A$-invariant codimension~$1$ subspace
of~$V\otimes_\Qbar\CC$.
\par
We now write~$\Qbar^N$ as an (internal) direct sum of $A$-invariant
subspaces
\[
  \Qbar^N = V_1 \dotplus V_2 \dotplus\cdots\dotplus V_r \dotplus Z,
\]
where~$V_1,\ldots,V_r$ are the distinct maximal Jordan subspaces
for~$A$ and where~$Z$ is the direct sum of all of the other Jordan
subspaces for~$A$.  By definition, we have $r=r(A)$. Further, for
each~$i$ we let~$W_i\subset V_i$ be the~$A$-invariant codimension~$1$
$\Qbar$-subspace of~$V_i$ satisfying
\[
  \Ker(B|_{V_i\otimes_\Qbar\CC}) = W_i\otimes_\Qbar\CC
\]
as described in~\eqref{eqn:KerBVW}. Since we also have
$\Ker(B|_{Z\otimes_\Qbar\CC})=0$ from~\eqref{eqn:BonnonmaxJ}, we see
that the kernel of~$B$ acting on~$\CC^N\cong\Qbar^N\otimes_\Qbar\CC$ is
\begin{align*}
  \Ker_\CC(B) &= (W_1\otimes_\Qbar\CC) 
     \dotplus\cdots\dotplus(W_r\otimes_\Qbar\CC) 
       \dotplus(Z\otimes_\Qbar\CC) \\
  &= (W_1\dotplus\cdots\dotplus W_r\dotplus Z) \otimes_\Qbar\CC.
\end{align*}
To ease notation, we let
\begin{equation}
  \label{eqn:UeqW1WNZ}
  U = W_1\dotplus\cdots\dotplus W_r\dotplus Z \subset \Qbar^N,
\end{equation}
so~$U$ is a~$\Qbar$-vector space satisfying
\begin{equation}
  \label{eqn:KerCBeqWiZ}
  \Ker_\CC(B) = U\otimes_\Qbar\CC
  \quad\text{and}\quad
  \dim_\Qbar U = N-r.
\end{equation}
The dimension of~$U$ follows from the fact that each~$W_i$ has
codimension~$1$ in~$V_i$, and $V_1\dotplus\cdots\dotplus V_r\dotplus
Z=\Qbar^N$. We also note that~$U$ depends only on the matrix~$A$,
which is clear from above, or from the alternative description
of~$U$ as the kernel of the linear transformation\EndNote{appendix1}
\[
  \prod_{|\l|<\rho} (A-\l)^N \cdot \prod_{|\l|=\rho} (A-\l)^\ell
  \in\Mat_N(\Qbar).
\]
\par
Returning to our point~$P\in\GG_m^N(\Qbar)$ with height
$\hhat_{\f_A}(P)=0$, we next observe that
\begin{align}
  \label{eqn:perplogPsupperpU}
  \Perp_\CC\bigl(\log\|P\|_v\bigr)
  &\supset \Perp_\CC\bigl(\Ker_\CC(B)\bigr)
  &&\text{from \eqref{eqn:logPvinKerCB},} \notag\\
  &= \Perp_\CC(U\otimes_\Qbar\CC)
  &&\text{from \eqref{eqn:KerCBeqWiZ},} \notag\\
  &= \Perp_\Qbar(U)\otimes_\Qbar\CC
  &&\text{from Lemma \ref{lemma:perpKVFK}(a),} \notag\\
  &\supset \Perp_\Qbar(U).
\end{align}
\par
This gives us some vectors in~$\Qbar^N$ that annihlate~$\log\|P\|_v$,
but our goal is to find vectors in~$\QQ^N$ with this property.  We
note that the coordinates of the vector~$\log\|P\|_v$ are logarithms
of algebraic numbers, so we can apply Baker's theorem
(Theorem~\ref{theorem:baker}) to conclude that the $\Qbar$-vector space
$\Perp_\Qbar\bigl(\log\|P\|_v\bigr)$ is Galois invariant as a subspace
of~$\Qbar^N$.\footnote{%
    If $v$ is non-archimedean, then
    $\|x_i\|_v=p_v^{r_{i,v}}$ for a rational prime~$p_v$ and rational
    numbers~$r_{i,v}\in\QQ$, so~$\log\|P\|_v$ is a vector in~$\QQ^N$
    multiplied by the scalar~$\log p_v$. Thus in the non-archimedean case,
    the equality $\Perp_{\Qbar}\bigl(\log\|P\|_v\bigr)=
    \Perp_{\QQ}\bigl(\log\|P\|_v\bigr)\otimes_{\QQ}\Qbar$ is a triviality.
    But for archimedean~$v$, we appear to need the full strength of
    Baker's theorem.}
Hence~\eqref{eqn:perplogPsupperpU} implies that
\begin{equation}
  \label{eqn:perplogPsupGsum}
  \Perp_\Qbar\bigl(\log\|P\|_v\bigr) 
  \supset \sum_{\s\in\Gal(\Qbar/\QQ)}\s\bigl(\Perp_\Qbar(U)\bigr).
\end{equation}
The $\Qbar$-vector space on the right-hand side
of~\eqref{eqn:perplogPsupGsum} is~$\Gal(\Qbar/\QQ)$-in\-var\-iant, so
Lemma~\ref{lemma:perpKVFK}(b) says that there is a
(unique)~$\QQ$-vector space~$Y\subset\QQ^N$ such that
\begin{equation}
  \label{eqn:sumsPerpUY}
  \sum_{\s\in\Gal(\Qbar/\QQ)}\s\bigl(\Perp_\Qbar(U)\bigr) = Y \otimes_\QQ\Qbar.
\end{equation}
We stress here that~$Y$ depends only on~$A$ and is independent of~$P$
and~$v$, since the same is true of~$U$. It follows
from~\eqref{eqn:perplogPsupGsum} that
\begin{equation}
  \label{eqn:PerpQPsupY}
  \Perp_\QQ\bigl(\log\|P\|_v\bigr) \supset Y.
\end{equation}
We defer the computation of~$\dim_\QQ(Y)$,
which turns out to equal~$\rbar$,  until the end of the proof.
\par
We let
\[
  L = Y\cap\ZZ^N\subset\ZZ^N,
\]
so~$L$ is an integral lattice satisfying
\[
  \rank_\ZZ L =\dim_\QQ Y.
\]
It follows from~\eqref{eqn:PerpQPsupY} that
\[
  \bfe\cdot\log\|P\|_v=0
  \quad\text{for all $\bfe\in L$.}
\]
Writing
\[
  P = (x_1,\ldots,x_N) \in \GG_m^N(K)
  \quad\text{and}\quad
  \bfe=(e_1,\ldots,e_N)\in L,
\]
this becomes
\begin{equation}
  \label{eqn:prodxjejeqv1}
  \prod_{j=1}^N \|x_j\|_v^{e_j} =
  \biggl\|\prod_{j=1}^N x_j^{e_j}\biggr\|_v = 1,
\end{equation}
where note that we are allowed to move the~$e_j$ across the absolute
value signs because they are integers. Formula~\eqref{eqn:prodxjejeqv1}
holds for all~$v\in M_K$, so Kronecker's
theorem~\cite[Theorem~3.8]{MR2316407} implies that
\begin{equation}
  \label{eqn:prodxjejroot}
  \prod_{j=1}^N x_j^{e_j} \quad\text{is a root of unity
    for all $\bfe\in L$.}
\end{equation}
We stress that~\eqref{eqn:prodxjejroot} holds for all
points~$P\in\GG_m^N(\Qbar)$ satsifying~$\hhat_{\f_A}(P)=0$ and
for all $\bfe\in L$, where~$L$ is independent of~$P$.
\par
The lattice~$L$ is associated to an algebraic subgroup~$G_L$
of~$\GG_m^N$ in the usual way,
\begin{equation}
  \label{eqn:GLeqintL}
  G_L = \bigcap_{\bfe\in L} \{ X_1^{e_1}\cdots X_N^{e_N} = 1 \bigr\},
\end{equation}
and the dimension of~$G_L$ is given by
\begin{equation}
  \label{eqn:dimGLleNr}
  \dim G_L = N - \rank_\ZZ L = N - \dim_\QQ Y.
\end{equation}
Further, we see from~\eqref{eqn:prodxjejroot} that
if~$\hhat_{\f_A}(P)=0$, then some power of the coordinates of~$P$
gives a point in~$G_L(\Qbar)$, which shows that
\begin{equation}
  \label{eqn:Pht0inGLdiv}
  \bigl\{P\in\GG_m^N(\Qbar) : \hhat_{\f_A}(P)=0 \bigr\}
  \subset G_L(\Qbar)^\div.
\end{equation}
\par
It remains to prove that $\dim G_L=N-\rbar$, which
from~\eqref{eqn:dimGLleNr}, is equivalent to showing that
$\dim_\QQ{Y}=\rbar$. We compute
\begin{align*}
  \dim_\QQ Y
  &= \dim_\Qbar\biggl( \sum_{\s\in\Gal(\Qbar/\QQ)}\s\bigl(\Perp_\Qbar(U)\bigr)
     \biggr)
  \qquad\text{from \eqref{eqn:sumsPerpUY},} \\
  &= \dim_\Qbar\biggl( \sum_{\s\in\Gal(\Qbar/\QQ)}\Perp_\Qbar(\s U)
     \biggr)\\
  &= \dim_\Qbar \Perp_\Qbar\biggl( 
    \bigcap_{\s\in\Gal(\Qbar/\QQ)} \s U \biggr)
  \qquad\text{from Lemma \ref{lemma:perpKVFK}(d),}\hidewidth \\
  &= N-\dim_\Qbar \biggl(
    \bigcap_{\s\in\Gal(\Qbar/\QQ)} \s U \biggr) \\
  &= N-\dim_\Qbar \left(\bigcap_{\s\in\Gal(\Qbar/\QQ)} 
      \left(\sum_{i=1}^r \s W_i \dotplus \s Z  \right)\right)
  \qquad\text{from \eqref{eqn:UeqW1WNZ}.}
\end{align*}
Each~$W_i$ is a codimension one subspace of a maximal Jordan
block~$V_i$, so each distinct Galois conjugate of a maximal Jordan
block contributes codimension one to the intersection.  By definition,
the number of such conjugates is~$\rbar$, so the codimension of the
intersection is~$\rbar$, and hence the dimension of the
intersection is is~$N-\rbar$.  This proves that $\dim_\QQ Y=\rbar$,
which completes the proof of Theorem~\ref{theorem:linrelsonlogP}.
\end{proof}

%%%%%%%%%%%%%%%%%%%%%%%%%%%%%%%%%%%%%%%%%%%%%%%%%%%%%%%%%%%%%%%%%%%%%%
\section{Proof of Corollaries of Theorem~\ref{theorem:linrelsonlogP}}
\label{section:proofofcorollaries}
%%%%%%%%%%%%%%%%%%%%%%%%%%%%%%%%%%%%%%%%%%%%%%%%%%%%%%%%%%%%%%%%%%%%%%

In this section we give the proofs of Corollaries
\ref{corollary:ht0orbitnotdense}, \ref{corollary:irredcharpoly},
\ref{corollary:algdegformonmap}, and~\ref{corollary:hgt0iffaeqd} to
Theorem~\ref{theorem:linrelsonlogP}.

\begin{proof}[Proof of Corollary $\ref{corollary:ht0orbitnotdense}$]
For~$Q=(y_1,\ldots,y_N)\in\GG_m^N(\Qbar)$ and~$d\in\ZZ$, we
use the notation
\[
  Q^d = (y_1^d,\ldots,y_N^d).
\]
We let $G\subsetneq\GG_m^N$ be the algebraic subgroup described in the
statement of Theorem~\ref{theorem:linrelsonlogP} for the monomial
map~$\f$, and we let~$L\subset\ZZ^N$ be the lattice associated to~$G$
via~\eqref{eqn:GLeqintL}.
\par
The assumption that~$\hhat_{\f}(P)=0$ implies that
\[
  \hhat_{\f}\bigl(\f^n(P)\bigr) = \d_\f^n \hhat_{\f}(P)=0
  \quad\text{for all $n\ge0$,}
\]
so Theorem~\ref{theorem:linrelsonlogP} says that $\f^n(P)\subset
G(\Qbar)^\div$ for all~$n\ge0$.  However, the points~$\f^n(P)$ in the
orbit~$\Orbit_\f(P)$ are all defined over the number field~$K=\QQ(P)$,
so
\begin{equation}
  \label{eqn:OfPGQdivK}
  \Orbit_{\f}(P) \subset G(\Qbar)^\div \cap \GG_m^N(K).
\end{equation}
\par
Let $d$ be the number of roots of unity in~$K$. We are going to prove
that
\begin{equation}
  \label{eqn:OfPdinGK}
  \Orbit_{\f}(P)^d \subset G(K).
\end{equation}
This will complete the proof, since writing~$G=G_L$ to indicate the
dependence of~$G$ on the lattice~$L\subset\ZZ^N$, we clearly have
\[
  \{Q\in\GG_m^N(\Qbar): Q^d\in G_L(\Qbar)\bigr\} = G_{dL}(\Qbar).
\]
So~\eqref{eqn:OfPdinGK} implies that~$\Orbit_\f(P)\subset G_{dL}(K)$.
But~$G_{dL}$ is an algebraic subgroup of~$\GG_m^N$ of the same
dimension as~$G_L$, and hence~$\Orbit_\f(P)$ is contained in a proper
algebraic subgroup of~$\GG_m^N$.
\par
To prove the claim, let
\[
  Q = (y_1,\ldots,y_N) \in G(\Qbar)^\div \cap \GG_m^N(K).
\]
The assumption that $Q\in G(\Qbar)^\div$ means that there is an
$m\ge1$ such that $Q^m\in G(\Qbar)$; we take the smallest
such~$m$. The group~$G$ is defined by the lattice~$L$, so 
\[
  y_1^{e_1m}\cdots y_N^{e_Nm}=1
  \quad\text{for all $\bfe=(e_1,\ldots,e_N)\in L$.}
\]
Taking roots, this implies that
\[
  \text{$y_1^{e_1}\cdots y_N^{e_N}$ is an $m^{\text{th}}$-root of unity
      for all $\bfe\in L$.}
\]
But $Q$ is in~$\GG_m^N(K)$, so $y_1^{e_1}\cdots y_N^{e_N}\in K$, and
hence $y_1^{e_1}\cdots y_N^{e_N}$ is a $d^{\text{th}}$-root of unity.
It follows that~$m\mid d$, so in particular~$Q^d\in G(\Qbar)$.  But
also $Q\in\GG_m^N(K)$, so $Q^d\in G(K)$. This is true for every $Q \in
G(\Qbar)^\div \cap \GG_m^N(K)$, so~\eqref{eqn:OfPGQdivK} implies
that~$Q^d\in G(K)$ for every~$Q\in\Orbit_\f(P)$.
Hence~$\Orbit_\f(P)^d\in G(K)$, which completes the proof
of~\eqref{eqn:OfPdinGK}, and with it the proof of
Corollary~\ref{corollary:ht0orbitnotdense}.
\end{proof}

\begin{proof}[Proof of Corollary $\ref{corollary:irredcharpoly}$]
We already know that
\[
  P\in\PrePer(\f) \quad\Longrightarrow\quad  \hhat_\f(P)=0
\]
from Proposition~\ref{proposition:canhtprops}(c), so we assume that
$\hhat_{\f}(P)=0$, and we want to prove that~$P$ is preperiodic
for~$\f$.  
\par
We have assumed that $\det(T-A)$ is irreducible, so the
eigenvalues~$\l_1,\ldots,\l_N$ of~$A$ are distinct
and form a complete set of Galois
conjugates. Hence the Jordan subspaces are all $1$-dimensional
and are pairwise Galois conjugate. Since at least one of them
is a maximal Jordan subspace, we see that $\rbar(A)=N$, and
hence the algebraic subgroup~$G\subset\GG_m^N$ described
in Theorem~\ref{theorem:linrelsonlogP} has dimension~$0$.
It follows that the divisible hull of~$G$ is given by
\[
  G(\Qbar)^\div = \GG_m^N(\Qbar)_\tors,
\]
where the torsion subgroup~$\GG_m^N(\Qbar)_\tors$ consists of all
points whose coordinates are roots of unity.
Theorem~\ref{theorem:linrelsonlogP} and the assumption
that $\hhat_\f(P)=0$ imply that $P\in G(\Qbar)^\div$. It is then clear
that~$P$ is preperiodic for the monomial map~$\f$, since 
the coordinates of~$\f^n(P)$ are all roots of unity lying in the
number field~$\QQ(P)$, so take on only finitely many possible values.
\end{proof}

\begin{proof}[Proof of Corollary $\ref{corollary:algdegformonmap}$]
If~$\d_\f=1$, then Proposition~\ref{proposition:hfnPledfnhP}
tells us that $1\le \a_\f(P)\le\d_\f=1$, so~$\a_\f(P)=1$ for all
points~$P$. We assume for the remainder of the proof that~$\d_\f>1$.
\par
The following fact will be useful later in the proof.  Letting
$P^d=(x_1^d,\ldots,x_N^d)$ and noting that~$\f$ is a homomorphism
of~$\GG_m^N$, we have
\[
  h\bigl(\f^n(P^d)\bigr)=h\bigl(\f^n(P)^d\bigr)=dh\bigl(\f^n(P)\bigr).
\]
It then follows directly from the definitions of canonical height and
arithmetic degree that
\begin{equation}
  \label{eqn:afPdeqafP}
  \hhat_\f(P^d)=d\hhat_\f(P)
  \quad\text{and}\quad
  \a_\f(P^d)=\a_\f(P)
  \quad\text{for any~$d\ge1$}.
\end{equation}
\par
Let $G\subset\GG_m^N$ be the smallest algebraic subgroup of~$\GG_m^N$
that contains the orbit~$\Orbit_\f(P)$. The group~$G$ might not be
connected, but its identity component~$G_0$ has finite index in~$G$,
say~$d=(G:G_0)$.  Then the orbit of~$P^d$ is contained in~$G_0$, so
the smallest algebraic subgroup of~$\GG_m^N$
containing~$\Orbit_\f(P^d)$ is~$G_0$.  Since~$\a_\f(P^d)=\a_\f(P)$
from~\eqref{eqn:afPdeqafP}, we may replace~$P$ with~$P^d$, which
reduces us to the case that the group~$G$ is connected.
\par
We next note that since~$\f$ is a homomorphism of~$\GG_m^N$, the
smallest algebraic subgroup~$G$ containing the orbit~$\Orbit_\f(P)$ is
itself~$\f$ invariant, i.e.,~$\f(G)\subset G$.\EndNote{appendix6} The
group~$G$ corresponds to a lattice $L\subset\ZZ^N$. More precisely,
there is a perfect pairing
\[
  \ZZ^N\times\GG_m^N\longrightarrow\GG_m,\quad
  (\bfe,\bfx)\longmapsto \bfx^\bfe = \prod_{i=1}^N x_i^{e_i},
\]
and~$G_L$ is by definition the right kernel of~$L$ for this pairing.
The inclusion $\f(G_L)\subset G_L$ is equivalent to the inclusion
$LA\subset A$, where~$A$ is the matrix associated to the monomial
map~$\f$, and the restriction of~$\f$ to~$G_L$ is a monomial map whose
associated linear transformation (over~$\QQ$) is the restriction
of~$A$ to~$L^\perp$, where
\[
  L^\perp=\{\bff\in\ZZ^N : \text{$\bff\cdot\bfe=0$ for all $\bfe\in L$}\}.
\]
More precisely, there is a finite homomorphism~$\GG_m^{N-k}\to G_L$
such that~$\f$ induces a monomial map~$\GG_m^{N-k}\to\GG_m^{N-k}$
whose associated linear transformation (extended to~$\QQ$) is
isomorphic to the restriction of~$A$ to~$\L^\perp\otimes\QQ$.  We
write $\f_L$ for the induced monomial map on $\GG_m^{N-k}$, we let
$A_L^\perp$ be the linear transformation that~$A$ induces
on~$L^\perp\otimes\QQ$, and we replace~$P$ by a point in~$\GG_m^{N-k}$
that maps to~$P$.
\par
We apply Corollary~\ref{corollary:ht0orbitnotdense} to
the~$\f|_L$-orbit of~$P$. We chose~$G_L$ to be the smallest algebraic
subgroup of~$\GG_m^N$ that contains~$\Orbit_\f(P)$, so
Corollary~\ref{corollary:ht0orbitnotdense} tells us
that~$\hhat_{\f_L,G_L}(P)>0$.  On the other hand, we have the
implications
\begin{align}
  \label{eqn:afLGLPeqrALperp}
  \hhat_{\f_L,G_L}(P)>0
  &\;\Longrightarrow\;
  \a_{\f_L,G_L}(P) = \d_{\f_L,G_L}
  &&\text{from Proposition \ref{proposition:canhtprops}(d),} \notag\\*
  &\;\Longrightarrow\;
  \a_{\f_L,G_L}(P) = \rho(A_L^\perp)
  &&\text{from Proposition \ref{proposition:monmapprops}(c).} 
\end{align}
The characteristic polynomial of~$A_L^\perp$ as a linear transormation
of~$L^\perp\otimes\QQ$ is a polynomial in~$\QQ[T]$ that divides the
characteristic polynomial of~$A$ as a linear transformation
of~$\QQ^N$, and hence~$\rho(A_L^\perp)$ is the largest root of some
factor of~$\det(T-A)$ in~$\QQ[T]$. 
We also note that~$\a_{\f_L,G_L}(P)=\a_\f(P)$, since for any
morphism~$i:\PP^k\to\PP^N$ with  $\dim i(\PP^k)=k$,
we have $h_{\PP^N}\circ i\asymp h_{\PP^k}$,
so we can compute the arithmetic degree on either~$\GG_m^N$ 
or~$G_L$.\EndNote{appendix7} This proves that
\[
  \bigl\{\a_{\f}(P) : P \in\GG_m^N(\Qbar)\bigr\}
  \subset \bigl\{1,\rho(f_1),\rho(f_2),\ldots,\rho(f_s)\bigr\}.
\]
\par
For the opposite inclusion, let~$f(T)\in\ZZ[T]$ be a monic
irreducible factor of the characteristic polynomial of~$A$
with~$\rho(f)>1$, and write
\[
  \det(T-A) = f(T)^eg(T)\quad\text{with $f(T)\nmid g(T)$.}
\]
Evaluation at~$\f$ gives a map~$\ZZ[T]\to\End(\GG_m^N)$, i.e.,
\[
  \left(\sum b_iT^i\right) \cdot P = \prod \f^i(P)^{b_i}.
\]
In particular, note that~$\det(T-A)$ annihliates~$\GG_m^N$, since a
linear transformation is a root of its own characteristic polynomial.
Consider the subgroup of~$\GG_m^N$ defined by
\[
  G = g(T)\cdot\GG_m^N.
\]
Then~$G$ is a nontrivial~$\f$-invariant algebraic subgroup of~$\GG_m^N$
satisfying~$f(T)^e\cdot G=0$. The group~$G$ is a torus, and the monomial
map~$\f_G:G\to G$ satisfies $f(\f_G)^e=1$, so the associated
matrix~$A_G$ satisfies~$f(A_G)^e=1$.  Since~$f$ is irreducible, it
follows that the characteristic polynomial of~$A_G$ is a positive
power of~$f(T)$, and then Proposition~\ref{proposition:monmapprops}(c)
tells us that~$\d_{\f_G}=\rho(A_G)=\rho(f)$. Further, if we take any
$P\in G(\Qbar)$ whose orbit~$\Orbit_\f(P)$ is Zariski dense in~$G$,
then Corollary~\ref{corollary:ht0orbitnotdense} tells
us that~$\hhat_{G,\f|_G}(P)>0$, and then
Proposition~\ref{proposition:canhtprops}(d) says that
\[
  \a_\f(P) = \a_{G,\f|_G}(P) = \d_{\f_G} = \rho(f).
\]
Since we also always have $\a_\f(1)=1$,
this completes the proof of the other inclusion
\[
  \bigl\{\a_{\f}(P) : P \in\GG_m^N(\Qbar)\bigr\}
  \supset \bigl\{1,\rho(f_1),\rho(f_2),\ldots,\rho(f_s)\bigr\},
\]
and with it, the proof of Corollary~\ref{corollary:algdegformonmap}.
\end{proof}

\begin{proof}[Proof of Corollary $\ref{corollary:hgt0iffaeqd}$]
The implication $\hhat_{\f}(P)>0\Rightarrow\a_{\f}(P)=\d_{\f}$ is easy
to prove for all dominant rational maps; see
Proposition~\ref{proposition:canhtprops}(d).  So we need to prove the
opposite implication under the assumption that~$A$ is 
diagonalizable.\EndNote{appendix8} We note that the diagonalizability
condition implies in particular that~$\ell(A)=0$, since every Jordan
block has dimension~$1$.
\par
Let~$P$ be a point with~$\hhat_{\f}(P)=0$.  We continue with the
notation from the proof of Corollary~\ref{corollary:algdegformonmap},
so in particular~$G=G_L\subset\GG_m^N$ is the smallest algebraic
subgroup containing~$\Orbit_\f(P)$ and~$\f_L$ is the restriction
of~$\f$ to~$G_L$. We proved that
\begin{equation}
  \label{eqn:afPeqdfL}
  \a_\f(P) =  \a_{\f_L,G_L}(P) = \rho(A_L^\perp) = \d_{\f_L};
\end{equation}
see~\eqref{eqn:afLGLPeqrALperp} and the remark following for the first
two equalities, while the third equality is
Proposition~\ref{proposition:monmapprops}(c).  The assumption
that~$G_L$ is the smallest algebraic subgroup
containing~$\Orbit_\f(P)$, combined with
Corollary~\ref{corollary:ht0orbitnotdense}, implies that
$\hhat_{G_L,\f_L}(P)>0$. Using the definition of canonical height, we
have
\begin{align*}
  0 &= \hhat_{\f}(P) = \lim_{n\to\infty} \frac{1}{\d_\f^n}h\bigl(\f^n(P)\bigr),\\
  0 &< \hhat_{\f_L,G_L}(P) 
      = \lim_{n\to\infty} \frac{1}{\d_{\f_L}^n}h\bigl(\f^n(P)\bigr).
\end{align*}
It follows that $\d_{\f_L}<\d_\f$ (strict inequality),
which together with~\eqref{eqn:afPeqdfL} gives the desired
inequality $\a_\f(P)<\d_\f$.
\end{proof}

%%%%%%%%%%%%%%%%%%%%%%%%%%%%%%%%%%%%%%%%%%%%%%%%%%%%%%%%%%%%%%%%%%%%%%
\section{Regular affine automorphisms}
\label{section:regaffaut}
%%%%%%%%%%%%%%%%%%%%%%%%%%%%%%%%%%%%%%%%%%%%%%%%%%%%%%%%%%%%%%%%%%%%%%

A rational map is \emph{algebraically stable} if $\deg(\f^n)$ is equal
to~$(\deg\f)^n$ for all~$n\ge1$, which from
Proposition~\ref{proposition:df=inf} is equivalent to~$\d_\f=\deg\f$.
As noted in Section~\ref{subsection:regaffauts}, regular affine
automorphisms are algebraically stable. In this section we describe
Kawaguchi's theory of canonical heights and use it to illustrate some
of the ideas in this paper and to raise a question about birational
maps.

\begin{theorem}
\label{theorem:kawacanht}
Let $\f:\AA^N\to\AA^N$ be a regular affine automorphism, i.e., the
extensions of~$\f$ and~$\f^{-1}$ to~$\PP^N$ are not morphisms, but
satisfy $Z(\f)\cap Z(\f^{-1})=\emptyset$.  Let $\hhat_{\f}$ and
$\hhat_{\f^{-1}}$ be the canonical heights associated to~$\f$
and~$\f^{-1}$, respectively.
\begin{parts}
\Part{(a)}
$\d_\f=\deg(\f)$ and $\d_{\f^{-1}}=\deg(\f^{-1})$.
\Part{(b)}
$h(P)+O(1)\le\hhat_\f(P)+\hhat_{\f^{-1}}(P)\le2h(P)+O(1)$  
for all~$P\in\AA^N(\Qbar)$.
\Part{(c)}
$
  \hhat_{\f}(P)=0
  \quad\Longleftrightarrow\quad
  \hhat_{\f^{-1}}(P)=0
  \quad\Longleftrightarrow\quad
  P\in\Per(\f).
$
\end{parts}
\end{theorem}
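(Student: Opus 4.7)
The plan is to handle the three parts in sequence. Part~(a) is the classical algebraic-stability theorem for regular affine automorphisms: the hypothesis $Z(\f)\cap Z(\f^{-1})=\emptyset$ prevents common-factor cancellation when composing $\f\circ\f^n$, since any such cancellation would correspond to a hypersurface contracted by $\f^n$ into $Z(\f)$, and a tracking-back argument shows that such contractions must originate from $Z(\f^{-1})$. An induction on~$n$ thus yields $\deg(\f^n)=(\deg\f)^n$, and symmetrically for $\f^{-1}$, giving the two dynamical-degree identities and incidentally $\ell_\f=\ell_{\f^{-1}}=0$; see the references collected in Section~\ref{subsection:regaffauts}.

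For part~(b), set $d_\pm=\deg(\f^{\pm1})$, so from~(a) the canonical height reduces to $\hhat_\f(P)=\limsup_{n\to\infty}d_+^{-n}h(\f^n(P))$. The upper bound comes from applying the triangle-inequality estimate $h(\f(Q))\le d_+h(Q)+O(1)$ iteratively, which gives $\hhat_\f(P)\le h(P)+O(1)$ as in the proof of Proposition~\ref{proposition:hfnPledfnhP}, and similarly for $\hhat_{\f^{-1}}$; adding yields the required upper bound. For the lower bound I would quote Kawaguchi's fundamental height inequality for regular affine automorphisms from~\cite{arxiv0909.3573,arxiv0909.3107}, which supplies a constant $C$ with
\[
  \frac{1}{d_+}h\bigl(\f(P)\bigr)+\frac{1}{d_-}h\bigl(\f^{-1}(P)\bigr)
    \ge \left(1+\frac{1}{d_+d_-}\right)h(P)-C.
\]
Iterating this estimate both upgrades the defining $\limsup$ to a genuine limit and produces the lower bound $\hhat_\f(P)+\hhat_{\f^{-1}}(P)\ge h(P)-C'$.

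Part~(c) follows by combining~(b) with the functional equation of the canonical height. The implication $P\in\Per(\f)\Rightarrow\hhat_\f(P)=\hhat_{\f^{-1}}(P)=0$ is Proposition~\ref{proposition:canhtprops}(c) applied to~$\f$ and to~$\f^{-1}$. Conversely, suppose $\hhat_\f(P)=0$. Then $\hhat_\f\bigl(\f^n(P)\bigr)=d_+^n\hhat_\f(P)=0$ while $\hhat_{\f^{-1}}\bigl(\f^n(P)\bigr)=d_-^{-n}\hhat_{\f^{-1}}(P)$, so inserting $\f^n(P)$ into the lower bound from~(b) produces
\[
  h\bigl(\f^n(P)\bigr)\le d_-^{-n}\hhat_{\f^{-1}}(P)+O(1),
\]
which stays bounded in~$n$ because $d_->1$ and $\hhat_{\f^{-1}}(P)$ is finite by the upper bound of~(b). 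Northcott's theorem, applied inside the number field of definition of~$P$, then forces the forward $\f$-orbit to be finite, so~$P$ is periodic for the automorphism~$\f$. The case $\hhat_{\f^{-1}}(P)=0$ is symmetric, completing the chain of equivalences.

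The main obstacle is the lower bound in part~(b): this is Kawaguchi's height inequality, whose proof requires a delicate local analysis at the indeterminacy loci of~$\f$ and~$\f^{-1}$, and which I would invoke as a black box from the cited papers rather than reconstruct here. Once that inequality is granted, parts~(a) and~(c) are direct consequences of the formal properties of the canonical height developed in Section~\ref{section:canonicalht}.
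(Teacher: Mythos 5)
Your proposal is correct and follows the same route as the paper, which simply cites Sibony for the algebraic stability in part~(a) and Kawaguchi's construction together with the Kawaguchi--Lee height inequality for parts~(b) and~(c). You have merely filled in the routine deductions (telescoping for the upper bound, the functional equations $\hhat_\f(\f^n(P))=d_+^n\hhat_\f(P)$ and $\hhat_{\f^{-1}}(\f^n(P))=d_-^{-n}\hhat_{\f^{-1}}(P)$ plus Northcott for part~(c)) that the paper leaves to the references.
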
 
\begin{proof}
(a) The fact that regular affine automorphisms are algebraically stable
is well-known; see for example~\cite[Chapter~2]{sibony:panoramas}.
\par\noindent(b,c)
The construction
is due to Kawaguchi~\cite{arxiv0405007}
(see also~\cite[Exercises~7.17--7.22]{MR2316407}),
but at the time it was only known to work for~$N=2$.  The deep height
inequality needed to justify the construction for general~$N$ was
proven independently by Kawaguchi~\cite{arxiv0909.3573} and
Lee~\cite{arxiv0909.3107}.
\end{proof}

Using Theorem~\ref{theorem:kawacanht}, it is easy to
compute~$\a_\f(P)$ for regular affine automorphisms.

\begin{corollary}
\label{corollary:afPregaffauts}
Let $\f:\PP^N\dashrightarrow\PP^N$ be a regular affine automorphism
defined over~$\Qbar$, and let $P\in\AA^N(\Qbar)$. Then
\[
  \a_\f(P) =\begin{cases}
    \d_\f &\text{if $P\notin\Per(\f)$,} \\
     1 &\text{if $P\in\Per(\f)$.} \\
  \end{cases}
\]
\end{corollary}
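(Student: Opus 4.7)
The plan is to invoke Theorem \ref{theorem:kawacanht} directly and combine it with the general-purpose inequalities already established for $\a_\f$. Since regular affine automorphisms are algebraically stable (part~(a)), the full canonical height theory is available, so there is essentially no extra work beyond bookkeeping.

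First I would dispose of the easy case $P\in\Per(\f)$. If $\f^k(P)=P$ for some $k\ge1$, then the orbit $\Orbit_\f(P)=\{P,\f(P),\ldots,\f^{k-1}(P)\}$ is a finite set, so the sequence $h\bigl(\f^n(P)\bigr)$ takes only finitely many values and is in particular bounded. Hence $h\bigl(\f^n(P)\bigr)^{1/n}\to 1$ (or else $h\bigl(\f^n(P)\bigr)=0$ for all large $n$, in which case the stated convention gives $\a_\f(P)=1$ directly). Either way, $\a_\f(P)=1$.

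Next I would handle the case $P\notin\Per(\f)$. By Theorem \ref{theorem:kawacanht}(c), the equivalence
\[
  \hhat_\f(P)=0 \;\Longleftrightarrow\; P\in\Per(\f)
\]
forces $\hhat_\f(P)>0$. Applying Proposition \ref{proposition:canhtprops}(d) then gives $\a_\f(P)=\d_\f$, which is the desired equality (and equals $\deg(\f)$ by Theorem \ref{theorem:kawacanht}(a), though we do not need this identification).

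There is no serious obstacle: all of the arithmetic depth is packaged inside Theorem \ref{theorem:kawacanht}, particularly the Kawaguchi--Lee height inequality used to prove parts (b) and (c). Once that theorem is in hand, the corollary is a two-line consequence, combining the vanishing characterization of canonical height zero with the elementary implication $\hhat_\f(P)>0\Rightarrow\a_\f(P)=\d_\f$ from Proposition \ref{proposition:canhtprops}(d), together with the triviality that finite orbits have arithmetic degree $1$.
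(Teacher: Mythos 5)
Your proposal is correct and follows exactly the paper's own argument: the periodic case is immediate from bounded heights, and for $P\notin\Per(\f)$ one combines Theorem~\ref{theorem:kawacanht}(c) with Proposition~\ref{proposition:canhtprops}(d). No gaps.
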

\begin{proof}
If $P\in\Per(\f)$, it is clear that $\a_\f(P)=1$.  Conversely,
if~$P\notin\Per(\f)$, then Theorem~\ref{theorem:kawacanht}(c) says
that $\hhat_\f(P)>0$, so Proposition~\ref{proposition:canhtprops}
implies that $\a_\f(P)=\d_\f$.
\end{proof}

\begin{remark}
\label{remark:hhatasymph}
If $\f:\PP^N\dashrightarrow\PP^N$ is a dominant birational map with
$\d_\f>1$ and $\d_{\f^{-1}}>1$, then there are canonical heights
associated to both~$\f$ and~$\f^{-1}$.  Neither of these canonical
heights can individually satsify $\hhat\asymp h$, but Kawaguchi's
construction suggests looking at the sum $\hhat_\f + \hhat_{\f^{-1}}$.
It would be very interesting to give general conditons which imply
that $\hhat_\f + \hhat_{\f^{-1}}\asymp h$, since it is an exercise to
prove that\EndNote{appendix4}
\[
  \hhat_\f + \hhat_{\f^{-1}}\asymp h\quad\text{and}\quad\hhat_\f(P)=0
  \quad\Longrightarrow\quad P\in\Per(\f).
\]
\end{remark}

%%%%%%%%%%%%%%%%%%%%%%%%%%%%%%%%%%%%%%%%%%%%%%%%%%%%%%%%%%%%%%%%%%%%%%
\section{Dominant Self-Maps of General Varieties}
\label{section:generalizations}
%%%%%%%%%%%%%%%%%%%%%%%%%%%%%%%%%%%%%%%%%%%%%%%%%%%%%%%%%%%%%%%%%%%%%%
Up to now we have restricted attention to rational self-maps of
$\PP^N$. In this section we describe how
Conjecture~\ref{conjecture:setofafPs} may be extended to arbitrary
varieties.  As usual, for any endomorphism~$F:V\to V$ of a finite
dimensional $\CC$-vector space, we write
\[
  \rho(F)
  = \max\bigl\{|\l| : \text{$\l$ is an eigenvalue of $F$}\bigr\}
\]
for the spectral radius of~$F$.

\begin{definition}
Let $X$ be a nonsingular irreducible algebraic variety, and let
$\psi:X\dashrightarrow X$ be a dominant rational map. Then~$\psi$
induces a $\QQ$-linear endomorphism~$\psi^*$ of the rational
N\'eron--Severi group $\NS(X)_\QQ=\NS(X)\otimes\QQ$.
N.B. In general, $(\psi_1\circ\psi_2)^*\ne\psi_2^*\circ\psi_1^*$.
Let~$\f:X\dashrightarrow X$ be a dominant rational map.  The
(\emph{first}) \emph{dynamical degree of~$\f$} is
\[
  \d_\f = \lim_{n\to\infty} \rho\bigl((\f^n)^*\bigr)^{1/n}. 
\]
We note that if $X=\PP^N$, then $\NS(X)_\QQ=\QQ$ and
$\rho(\f^*)=\deg(\f)$, so this definition is consistant with our
earlier definition. 
\end{definition}

\begin{definition}
Let $X$ be a nonsingular irreducible algebraic variety defined
over~$\Qbar$, let $\f:X\dashrightarrow X$ be a dominant rational map
defined over~$\Qbar$, and fix a height function~$h_X$ on~$X(\Qbar)$
associated to an ample divisor. Also let
\[
  X(\Qbar)_\f = \bigl\{P\in X(\Qbar) : \Orbit_\f(P)\cap Z(\f)=\emptyset\bigr\}.
\]
Then for~$P\in X(\Qbar)_\f$, we define the \emph{arithmetic degree
  of~$\f$ at~$P$} to be
\[
  \a_\f(P) = \limsup_{n\to\infty} h_X\bigl(\f^n(P)\bigr)^{1/n}.
\]
\end{definition}

\begin{remark}
The definition of~$\a_\f(P)$ is independent of the choice of height
function on~$X$, because if~$D$ and~$E$ are ample divisors on~$X$, then
$h_{X,D}\asymp h_{X,E}$. Hence there is a constant~$C>0$ such that
\[
  C^{-1} h_{X,E}\bigl(\f^n(P)\bigr)-C
  \le h_{X,D}\bigl(\f^n(P)\bigr) \le
  C h_{X,E}\bigl(\f^n(P)\bigr)+C.
\]
Taking the $n^{\text{th}}$-root and letting $n\to\infty$ shows that~$h_{X,D}$
and~$h_{X,E}$ yield the same value of~$\a_\f(P)$.
\end{remark}

We now generalize Conjecture~\ref{conjecture:setofafPs}.

\begin{conjecture}
\label{conjecture:afPdfonvarieties}
Let $X$ be a nonsingular irreducible algebraic variety defined
over~$\Qbar$, and let $\f:X\dashrightarrow X$ be a dominant rational
map defined over~$\Qbar$.
\begin{parts}
\Part{(a)}
The set
\[
  \bigl\{\a_\f(P) :  P\in X(\Qbar)_\f \bigr\}
\]
is a finite set of algebraic integers. 
\Part{(b)}
Let $P\in X(\Qbar)_\f$ be a point such that $\Orbit_\f(P)$ is
Zariski dense in~$X$. Then $\a_\f(P)=\d_\f$.
\end{parts}
\end{conjecture}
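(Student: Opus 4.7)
The plan is to establish the conjecture in the setting referenced in the paper's introduction, namely $X$ a K3 surface and $\f : X \to X$ an automorphism of positive entropy (e.g.\ the Wehler case of Picard rank $2$ with two elliptic fibrations interchanged by $\f$). The strategy is to leverage the author's earlier canonical height theory for K3 surfaces to pin down the arithmetic behavior of orbits precisely.

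In that setting one constructs canonical heights $\hhat_\f^+, \hhat_\f^- : X(\Qbar) \to [0,\infty)$ satisfying $\hhat_\f^\pm \circ \f = \d_\f^{\pm 1} \hhat_\f^\pm$ together with the key comparison $\hhat_\f^+ + \hhat_\f^- \asymp h$ for a Weil height $h$ attached to an ample divisor~$D$. Decomposing the class of $D$ along the eigenvectors $D^+, D^-$ of $\f^*$ for eigenvalues $\d_\f^{\pm 1}$ and applying $\f^n$, the height machine yields an asymptotic
\[
  h\bigl(\f^n(P)\bigr) = c^+ \d_\f^n \, \hhat_\f^+(P) + c^- \d_\f^{-n} \, \hhat_\f^-(P) + O(1)
\]
with positive constants $c^\pm$. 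Taking $n$-th roots and the limsup, one reads off
\[
  \a_\f(P) = \begin{cases} \d_\f & \text{if } \hhat_\f^+(P) > 0, \\ 1 & \text{if } \hhat_\f^+(P) = 0, \end{cases}
\]
so $\{\a_\f(P)\} \subset \{1, \d_\f\}$; this proves part~(a), noting that $\d_\f$ is a Salem number, hence an algebraic integer.

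For part~(b), suppose $\Orbit_\f(P)$ is Zariski dense in $X$. If $\hhat_\f^+(P) = 0$, then the displayed asymptotic shows that $h\bigl(\f^n(P)\bigr)$ stays bounded as $n \to \infty$, so $\Orbit_\f(P)$ lies in a set of bounded height in $X(K)$ for $K = \QQ(P)$ and is therefore finite by Northcott's theorem, contradicting Zariski density. Hence $\hhat_\f^+(P) > 0$, and the case analysis above gives $\a_\f(P) = \d_\f$.

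The main obstacle, and the reason the argument is only expected to go through for a restricted class of K3 surfaces, is establishing the comparison $\hhat_\f^+ + \hhat_\f^- \asymp h$. In the Picard rank $2$ two-fibration case this is the author's original K3 construction, exploiting the two elliptic pencils and their sections. In higher Picard rank one must contend with a neutral eigenspace $N \subset \NS(X)_\RR$ on which $\f^*$ acts with eigenvalues of absolute value one, and argue that the $N$-contribution to $h\bigl(\f^n(P)\bigr)$ grows sufficiently slowly (at worst polynomially) to be absorbed into an error term; this appears to require an analogue in the K3 setting of the subtle height inequalities of Kawaguchi and Lee that underpin the regular affine automorphism theory discussed in Section~\ref{section:regaffaut}.
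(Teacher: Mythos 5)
Your proposal is correct and follows essentially the same route as the paper's Theorem in Section~\ref{section:generalizations}: both restrict to the Wehler K3 surfaces of Picard rank~$2$, invoke the author's eigendivisor canonical heights $\hhat^\pm$ with $\hhat^+ + \hhat^-$ comparable to an ample Weil height, write $h(\f^n(P))$ as $\d_\f^n\hhat^+(P) + \d_\f^{-n}\hhat^-(P) + O(1)$, take $n$-th roots to read off $\a_\f(P)\in\{1,\d_\f\}$, and use the vanishing characterization of $\hhat^+$ (equivalently a Northcott argument) to settle part~(b). The only noteworthy deviation is your closing remark about the neutral eigenspace obstruction in higher Picard rank, which the paper does not discuss but which is a fair observation; one small imprecision is calling $\d_\f = 7+4\sqrt3$ a Salem number, when it is a quadratic unit (a degenerate/quadratic case), though the point that it is an algebraic integer stands.
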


If~$\f$ is a morphism, then $\rho({\f^*}^n)=\rho(\f^*)^n$, so in
particular~$\d_\f=\rho(\f^*)$ is an algebraic integer. However, even
for morphisms, Conjecture~\ref{conjecture:afPdfonvarieties} appears to
be nontrivial in general. We now show that it is true for the K3
surfaces and automorphisms studied in~\cite{silverman:K3heights}.

\begin{theorem}
Let $X\subset\PP^2\times\PP^2$ be a smooth surface given by the
intersection of a~$(2,2)$-form and a~$(1,1)$-form, and assume that
$\NS(X)\cong\ZZ^2$.  The two projections~$\pi_1,\pi_2:X\to\PP^2$
induce noncommuting involutions~$\iota_1,\iota_2:X\to\PP^2$, and
the map~$\f=\iota_1\circ\iota_2$ is a automorphism of~$X$ of
infinite order. \textup(See~\cite{silverman:K3heights} for 
details.\textup) Then
$\d_\f=7+4\sqrt3$, 
\[
  \a_\f(P)=\d_\f
  \;\Longleftrightarrow\;
  P\notin\Per(\f)
  \;\Longleftrightarrow\;
  \text{$\Orbit_\f(P)$ is Zariski dense,}
\]
and 
\[
  \a_\f(P)=1
  \;\Longleftrightarrow\;
  P\in\Per(\f)
  \;\Longleftrightarrow\;
  \text{$\Orbit_\f(P)$ is not Zariski dense.}
\]
In particular, Conjecture~$\ref{conjecture:afPdfonvarieties}$ is true.
\end{theorem}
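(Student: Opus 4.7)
The plan is to deduce the theorem from the canonical height theory for these K3 surfaces developed in~\cite{silverman:K3heights}, together with a short N\'eron--Severi argument for Zariski density.

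First, I would compute~$\d_\f$ by describing the action of~$\f^*$ on $\NS(X)_\QQ\cong\QQ^2$. With respect to the basis $H_1 = \pi_1^*\Ocal_{\PP^2}(1)$, $H_2 = \pi_2^*\Ocal_{\PP^2}(1)$, the intersection numbers on~$X$ work out to $H_1^2 = H_2^2 = 2$ and $H_1\cdot H_2 = 4$. Since~$\iota_i$ is the involution of the generically $2$-to-$1$ projection~$\pi_i$, one has $\iota_i^*H_i = H_i$, and intersecting the identity $\iota_1^*H_2 + H_2 = \pi_1^*\pi_{1*}H_2$ with~$H_1$ forces $\iota_1^*H_2 = 4H_1-H_2$, and symmetrically for~$\iota_2^*$. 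Thus in the basis $(H_1,H_2)$
\[
[\iota_1^*]=\begin{pmatrix}1&4\\0&-1\end{pmatrix},\quad
[\iota_2^*]=\begin{pmatrix}-1&0\\4&1\end{pmatrix},\quad
[\f^*]=[\iota_2^*][\iota_1^*]=\begin{pmatrix}-1&-4\\4&15\end{pmatrix},
\]
with characteristic polynomial $T^2-14T+1$ and larger root $7+4\sqrt 3$. Because $\f$ is an automorphism, $(\f^n)^*=(\f^*)^n$, hence $\d_\f = \rho(\f^*) = 7+4\sqrt 3$.

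Next, I would invoke the two canonical heights $\hhat_\f^{\pm}\colon X(\Qbar)\to[0,\infty)$ constructed in~\cite{silverman:K3heights}, which satisfy
\[
\hhat_\f^+\circ\f = \d_\f\hhat_\f^+,\quad
\hhat_\f^-\circ\f = \d_\f^{-1}\hhat_\f^-,\quad
\hhat_\f^+ + \hhat_\f^- = h + O(1),
\]
together with the characterization $P\in\Per(\f)\iff \hhat_\f^+(P)=\hhat_\f^-(P)=0$. If $P\in\Per(\f)$ then $\Orbit_\f(P)$ is finite and $\a_\f(P)=1$. If $P\notin\Per(\f)$, I would first show $\hhat_\f^+(P)>0$: otherwise $h(\f^n(P)) = \d_\f^{-n}\hhat_\f^-(P)+O(1)$ would be bounded, and since $\Orbit_\f(P)\subset X(\QQ(P))$, Northcott's theorem would force the orbit to be finite, a contradiction. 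Then
\[
h\bigl(\f^n(P)\bigr) = \d_\f^n\hhat_\f^+(P) + \d_\f^{-n}\hhat_\f^-(P) + O(1)
\]
gives $\a_\f(P)=\d_\f$ on extracting $n^{\text{th}}$ roots.

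For the Zariski density dichotomy, $P\in\Per(\f)$ trivially gives a non-dense orbit. Conversely, suppose $P\notin\Per(\f)$ and let $Y\subset X$ be the Zariski closure of $\Orbit_\f(P)$; assume for contradiction $Y\subsetneq X$. Then $Y$ is $\f$-stable and contains infinitely many points of the surface~$X$, so some irreducible component~$C$ of~$Y$ is a curve; the finitely many components are permuted by~$\f$, so some iterate $\f^k$ fixes~$C$, giving $\f^{*k}[C]=[C]$ in $\NS(X)_\QQ$. But the eigenvalues of $\f^{*k}$ on this two-dimensional rational space are $(7\pm 4\sqrt 3)^k\ne 1$, so $\f^{*k}$ has no nonzero fixed vector; hence $[C]=0$, contradicting $[C]\cdot [H_1+H_2] > 0$ (ampleness).

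The main obstacle, were one to prove the theorem from scratch, is the construction of the canonical heights $\hhat_\f^{\pm}$, and in particular the two-sided Kawaguchi-type estimate $\hhat_\f^+ + \hhat_\f^- \asymp h$ together with the zero-locus description of $\Per(\f)$; both are established in~\cite{silverman:K3heights}. Granted those ingredients, the three remaining steps---the computation of~$\d_\f$ as a spectral radius on~$\NS(X)$, the deduction of $\a_\f(P)=\d_\f$ from positivity of~$\hhat_\f^+$, and the N\'eron--Severi argument ruling out proper invariant curves---are all short and essentially formal.
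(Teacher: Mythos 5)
Your proof is correct and follows essentially the same route as the paper: compute $\d_\f$ as the spectral radius of $\f^*$ on $\NS(X)_\QQ$, then use the canonical heights $\hhat^{\pm}$ of~\cite{silverman:K3heights} and the identity $h\bigl(\f^n(P)\bigr)=\d_\f^{n}\hhat^+(P)+\d_\f^{-n}\hhat^-(P)+O(1)$ to read off $\a_\f(P)$. The only difference is that you supply self-contained derivations (the intersection-theoretic matrix computation, the Northcott argument for $\hhat^+(P)>0$ off $\Per(\f)$, and the N\'eron--Severi argument for the Zariski density dichotomy) of facts the paper simply cites from~\cite{silverman:K3heights}; these are all correct.
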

\begin{proof}
To ease notation, let $\b=7+4\sqrt3$.  It is shown
in~\cite{silverman:K3heights} that~$\f^*$ acts on the natural
basis~$\{\pi_1^*H,\pi_2^*H\}$ of~$\Pic(X)=\NS(X)$ via the matrix
$\left(\begin{smallmatrix} -1&4\\ -4&15\\ \end{smallmatrix}\right)$.
This matrix has eigenvalues~$\b$ and~$\b^{-1}$, and~$\f$
is a morphism, so $\d_\f=\b$.

It is further shown that there are
divisors~$E^+,E^-\in\NS(X)\otimes\RR$ satisfying $\f^*E^+=\b E^+$ and
$\f^*E^- = \b^{-1}E^-$ and such that~$E^++E^-$ is in the ample cone.
Writing the associated canonical height functions as~$\hhat^+$
and~$\hhat^-$, the function~$\hhat=\hhat^++\hhat^-$ is a Weil height
function associated to an ample divisor, so we can use it to
compute~$\a_\f(P)$. In particular, it is proven
in~\cite{silverman:K3heights} that
\begin{align*}
  \hhat^+(P)=0 
  \;\Longleftrightarrow\;
  \hhat^-(P)=0 
  &\;\Longleftrightarrow\;
  \hhat(P)=0
  \;\Longleftrightarrow\;
  P\in\Per(\f) \\
  &\;\Longleftrightarrow\;
  \text{$\Orbit_\f(P)$ is not Zariski dense}.
\end{align*}
\par
It is clear that if~$P\in\Per(\f)$, then~$\a_\f(P)=1$. Suppose now
that~$P\notin\Per(\f)$, or equivalently, that~$\Orbit_\f(P)$ is
Zariski dense. Then
\[
  \hhat\bigl(\f^n(P)\bigr)
  = \hhat^+\bigl(\f^n(P)\bigr) + \hhat^-\bigl(\f^n(P)\bigr)
  = \b^n\hhat^+(P)+\b^{-n}\hhat^-(P).
\]
Since~$\b>1$ and~$\hhat^+(P)>0$, taking $n^{\text{th}}$-roots and
letting~$n\to\infty$ gives $\a_\f(P)=\b=\d_\f$.
\end{proof}

%%%%%%%%%%%%%%%%%%%%%%%%%%%%%%%%%%%%%%%%%%%%%%%%%%%%%%%%%%%%%%%%%%%%%%%%
% Bibliography
%%%%%%%%%%%%%%%%%%%%%%%%%%%%%%%%%%%%%%%%%%%%%%%%%%%%%%%%%%%%%%%%%%%%%%%%

%% \begin{thebibliography}{99}
%% \itemsep=\smallskipamount
%% \end{thebibliography}

%% \bibliographystyle{plain}
%% \bibliography{c:/Users/Joe/JHS/Book/ADS/ArithDyn}

\begin{thebibliography}{10}

\bibitem{MR1732080}
N.~Abarenkova, J.-Ch. Angl{\`e}s~d'Auriac, S.~Boukraa, S.~Hassani, and J.-M.
  Maillard.
\newblock Topological entropy and {A}rnold complexity for two-dimensional
  mappings.
\newblock {\em Phys. Lett. A}, 262(1):44--49, 1999.

\bibitem{MR0175106}
R.~L. Adler, A.~G. Konheim, and M.~H. McAndrew.
\newblock Topological entropy.
\newblock {\em Trans. Amer. Math. Soc.}, 114:309--319, 1965.

\bibitem{MR2784670}
Ekaterina Amerik.
\newblock Existence of non-preperiodic algebraic points for a rational self-map
  of infinite order.
\newblock {\em Math. Res. Lett.}, 18(2):251--256, 2011.

\bibitem{MR2220002}
J.-Ch. Angl{\`e}s~d'Auriac, J.-M. Maillard, and C.~M. Viallet.
\newblock On the complexity of some birational transformations.
\newblock {\em J. Phys. A}, 39(14):3641--3654, 2006.

\bibitem{MR1139553}
V.~I. Arnol{\cprime}d.
\newblock Dynamics of complexity of intersections.
\newblock {\em Bol. Soc. Brasil. Mat. (N.S.)}, 21(1):1--10, 1990.

\bibitem{MR1074572}
Alan Baker.
\newblock {\em Transcendental number theory}.
\newblock Cambridge Mathematical Library. Cambridge University Press,
  Cambridge, second edition, 1990.

\bibitem{arxiv0601046}
Matthew Baker.
\newblock A finiteness theorem for canonical heights attached to rational maps
  over function fields.
\newblock {\em J. Reine Angew. Math.}, 626:205--233, 2009.

\bibitem{MR2111418}
Eric Bedford and Kyounghee Kim.
\newblock On the degree growth of birational mappings in higher dimension.
\newblock {\em J. Geom. Anal.}, 14(4):567--596, 2004.

\bibitem{arxiv0512507}
Eric Bedford and Kyounghee Kim.
\newblock Degree growth of matrix inversion: birational maps of symmetric,
  cyclic matrices.
\newblock {\em Discrete Contin. Dyn. Syst.}, 21(4):977--1013, 2008.

\bibitem{MR2449533}
Eric Bedford and Kyounghee Kim.
\newblock Linear recurrences in the degree sequences of monomial mappings.
\newblock {\em Ergodic Theory Dynam. Systems}, 28(5):1369--1375, 2008.

\bibitem{MR2428100}
Eric Bedford, Kyounghee Kim, Tuyen~Trung Truong, Nina Abarenkova, and
  Jean-Marie Maillard.
\newblock Degree complexity of a family of birational maps.
\newblock {\em Math. Phys. Anal. Geom.}, 11(1):53--71, 2008.

\bibitem{MR1704282}
M.~P. Bellon and C.-M. Viallet.
\newblock Algebraic entropy.
\newblock {\em Comm. Math. Phys.}, 204(2):425--437, 1999.

\bibitem{arxiv0510444}
Robert~L. Benedetto.
\newblock Heights and preperiodic points of polynomials over function fields.
\newblock {\em Int. Math. Res. Not.}, (62):3855--3866, 2005.

\bibitem{bombierigubler}
Enrico Bombieri and Walter Gubler.
\newblock {\em Heights in {D}iophantine Geometry}.
\newblock Number~4 in New Mathematical Monographs. Cambridge University Press,
  Cambridge, 2006.

\bibitem{arxiv0608267}
S{\'e}bastien Boucksom, Charles Favre, and Mattias Jonsson.
\newblock Degree growth of meromorphic surface maps.
\newblock {\em Duke Math. J.}, 141(3):519--538, 2008.

\bibitem{MR2171197}
Alexandru Buium.
\newblock Complex dynamics and invariant forms mod {$p$}.
\newblock {\em Int. Math. Res. Not.}, (31):1889--1899, 2005.

\bibitem{callsilv:htonvariety}
Gregory~S. Call and Joseph~H. Silverman.
\newblock Canonical heights on varieties with morphisms.
\newblock {\em Compositio Math.}, 89(2):163--205, 1993.

\bibitem{MR1877828}
Dan Coman.
\newblock On the dynamics of a class of quadratic polynomial automorphisms of
  {$\mathbb C^3$}.
\newblock {\em Discrete Contin. Dyn. Syst.}, 8(1):55--67, 2002.

\bibitem{MR1721516}
Dan Coman and John~Erik Forn{\ae}ss.
\newblock Green's functions for irregular quadratic polynomial automorphisms of
  {${\mathbf{C}}^3$}.
\newblock {\em Michigan Math. J.}, 46(3):419--459, 1999.

\bibitem{denis:periodicaffineaut}
Laurent Denis.
\newblock Points p\'eriodiques des automorphismes affines.
\newblock {\em J. Reine Angew. Math.}, 467:157--167, 1995.

\bibitem{MR2491886}
Dikran Dikranjan, Brendan Goldsmith, Luigi Salce, and Paolo Zanardo.
\newblock Algebraic entropy for abelian groups.
\newblock {\em Trans. Amer. Math. Soc.}, 361(7):3401--3434, 2009.

\bibitem{MR1867314}
J.~Diller and C.~Favre.
\newblock Dynamics of bimeromorphic maps of surfaces.
\newblock {\em Amer. J. Math.}, 123(6):1135--1169, 2001.

\bibitem{MR2180409}
Tien-Cuong Dinh and Nessim Sibony.
\newblock Une borne sup\'erieure pour l'entropie topologique d'une application
  rationnelle.
\newblock {\em Ann. of Math. (2)}, 161(3):1637--1644, 2005.

\bibitem{MR2339287}
Charles Favre and Mattias Jonsson.
\newblock Eigenvaluations.
\newblock {\em Ann. Sci. \'Ecole Norm. Sup. (4)}, 40(2):309--349, 2007.

\bibitem{arxiv0711.2770}
Charles Favre and Mattias Jonsson.
\newblock Dynamical compactifications of {${\mathbf{C}}^2$}.
\newblock {\em Ann. of Math. (2)}, 173(1):211--248, 2011.

\bibitem{arxiv1011.2854}
Charles Favre and Elizabeth Wulcan.
\newblock Degree growth of monomial maps and {M}c{M}ullen's polytope algebra,
  2010.
\newblock \emph{Indiana Math. J.}, to appear.

\bibitem{MR1369137}
John~Erik Fornaess and Nessim Sibony.
\newblock Complex dynamics in higher dimension. {II}.
\newblock In {\em Modern {M}ethods in {C}omplex {A}nalysis (Princeton, NJ,
  1992)}, volume 137 of {\em Ann. of Math. Stud.}, pages 135--182. Princeton
  Univ. Press, Princeton, NJ, 1995.

\bibitem{arxiv0805.1560}
D.~Ghioca and T.~J. Tucker.
\newblock Periodic points, linearizing maps, and the dynamical {M}ordell-{L}ang
  problem.
\newblock {\em J. Number Theory}, 129(6):1392--1403, 2009.

\bibitem{arxiv0705.1954}
Dragos Ghioca, Thomas~J. Tucker, and Michael~E. Zieve.
\newblock Intersections of polynomials orbits, and a dynamical {M}ordell-{L}ang
  conjecture.
\newblock {\em Invent. Math.}, 171(2):463--483, 2008.

\bibitem{arxiv1111.1287}
Anna Giordano~Bruno and Simone Virili.
\newblock Algebraic {Y}uzvinski formula, 2011.
\newblock \url{arXiv:1111.1287}.

\bibitem{MR2556644}
B.~Grammaticos, R.~G. Halburd, A.~Ramani, and C.-M. Viallet.
\newblock How to detect the integrability of discrete systems.
\newblock {\em J. Phys. A}, 42(45):454002, 30, 2009.

\bibitem{MR2131425}
R.~G. Halburd.
\newblock Diophantine integrability.
\newblock {\em J. Phys. A}, 38(16):L263--L269, 2005.

\bibitem{MR2358970}
Boris Hasselblatt and James Propp.
\newblock {D}egree-growth of monomial maps.
\newblock {\em Ergodic Theory Dynam. Systems}, 27(5):1375--1397, 2007.
\newblock Corrigendum vol. 6, page 1999.

\bibitem{hindrysilverman:diophantinegeometry}
Marc Hindry and Joseph~H. Silverman.
\newblock {\em Diophantine {G}eometry: An {I}ntroduction}, volume 201 of {\em
  Graduate Texts in Mathematics}.
\newblock Springer-Verlag, New York, 2000.

\bibitem{MR2212057}
A.~N.~W. Hone.
\newblock Diophantine non-integrability of a third-order recurrence with the
  {L}aurent property.
\newblock {\em J. Phys. A}, 39(12):L171--L177, 2006.

\bibitem{arXiv:1001.3938}
Mattias Jonsson and Elizabeth Wulcan.
\newblock Stabilization of monomial maps.
\newblock {\em Michigan Math. J.}, 60:629--660, 2011.

\bibitem{arxiv1202.0203}
Mattias Jonsson and Elizabeth Wulcan.
\newblock Canonical heights for plane polynomial maps of small topological
  degree: {A} conjecture of {S}ilverman, 2012.
\newblock \url{arXiv:1202.0203}.

\bibitem{arxiv0405007}
Shu Kawaguchi.
\newblock Canonical height functions for affine plane automorphisms.
\newblock {\em Math. Ann.}, 335(2):285--310, 2006.

\bibitem{arxiv0909.3573}
Shu Kawaguchi.
\newblock Local and global canonical height functions for affine space regular
  automorphisms, 2009.
\newblock \url{arXiv:0909.3573}.

\bibitem{kawsilv:arithdegledyndeg}
Shu Kawaguchi and Joseph~H. Silverman.
\newblock On the dynamical and arithmetic degrees of rational self-maps of
  algebraic varieties, 2012.
\newblock \url{arXiv:1208.0815}.

\bibitem{lang:diophantinegeometry}
Serge Lang.
\newblock {\em Fundamentals of {D}iophantine {G}eometry}.
\newblock Springer-Verlag, New York, 1983.

\bibitem{arxiv0909.3107}
ChongGyu Lee.
\newblock An upper bound for the height for regular affine automorphisms of
  $\mathbb{A}^n$, 2009.
\newblock \url{arXiv:0909.3107}, \emph{Mathematische Annalen}, to appear.

\bibitem{arxiv1010.6285}
Jan-Li Lin.
\newblock Pulling back cohomology classes and dynamical degrees of monomial
  maps, 2010.
\newblock \url{arXiv:1010.6285}, to appear in \emph{Bull. SMF}.

\bibitem{arxiv1007.0253}
Jan-Li Lin.
\newblock Algebraic stability and degree growth of monomial maps.
\newblock {\em Math. Z.}, 271(1-2):293--311, 2012.

\bibitem{MR1836434}
Kazutoshi Maegawa.
\newblock Quadratic polynomial automorphisms of dynamical degree golden ratio
  of {$\mathbb C^3$}.
\newblock {\em Ergodic Theory Dynam. Systems}, 21(3):823--832, 2001.

\bibitem{MR1988948}
Sandra Marcello.
\newblock Sur la dynamique arithm\'etique des automorphismes de l'espace
  affine.
\newblock {\em Bull. Soc. Math. France}, 131(2):229--257, 2003.

\bibitem{MR2273670}
Vi{\^e}t-Anh Nguy{\^e}n.
\newblock Algebraic degrees for iterates of meromorphic self-maps of {${\mathbb
  P}^k$}.
\newblock {\em Publ. Mat.}, 50(2):457--473, 2006.

\bibitem{MR540634}
Justin Peters.
\newblock Entropy on discrete abelian groups.
\newblock {\em Adv. in Math.}, 33(1):1--13, 1979.

\bibitem{MR637984}
Justin Peters.
\newblock Entropy of automorphisms on {L}.{C}.{A}. groups.
\newblock {\em Pacific J. Math.}, 96(2):475--488, 1981.

\bibitem{MR2017601}
John A.~G. Roberts and Franco Vivaldi.
\newblock Arithmetical method to detect integrability in maps.
\newblock {\em Phys. Rev. Lett.}, 90(3):034102, 4, 2003.

\bibitem{MR2164737}
John A.~G. Roberts and Franco Vivaldi.
\newblock Signature of time-reversal symmetry in polynomial automorphisms over
  finite fields.
\newblock {\em Nonlinearity}, 18(5):2171--2192, 2005.

\bibitem{MR2525820}
John A.~G. Roberts and Franco Vivaldi.
\newblock A combinatorial model for reversible rational maps over finite
  fields.
\newblock {\em Nonlinearity}, 22(8):1965--1982, 2009.

\bibitem{MR1488341}
Alexander Russakovskii and Bernard Shiffman.
\newblock Value distribution for sequences of rational mappings and complex
  dynamics.
\newblock {\em Indiana Univ. Math. J.}, 46(3):897--932, 1997.

\bibitem{sibony:panoramas}
Nessim Sibony.
\newblock Dynamique des applications rationnelles de {$\mathbb{P}\sp k$}.
\newblock In {\em Dynamique et g\'eom\'etrie complexes (Lyon, 1997)}, volume~8
  of {\em Panor. Synth\`eses}, pages ix--x, xi--xii, 97--185. Soc. Math.
  France, Paris, 1999.

\bibitem{silverman:K3heights}
Joseph~H. Silverman.
\newblock Rational points on {$K3$} surfaces: a new canonical height.
\newblock {\em Invent. Math.}, 105(2):347--373, 1991.

\bibitem{silverman:henonmap}
Joseph~H. Silverman.
\newblock Geometric and arithmetic properties of the {H}\'enon map.
\newblock {\em Math. Z.}, 215(2):237--250, 1994.

\bibitem{MR2316407}
Joseph~H. Silverman.
\newblock {\em The {A}rithmetic of {D}ynamical {S}ystems}, volume 241 of {\em
  Graduate Texts in Mathematics}.
\newblock Springer, New York, 2007.

\bibitem{MR2514094}
Joseph~H. Silverman.
\newblock {\em The {A}rithmetic of {E}lliptic {C}urves}, volume 106 of {\em
  Graduate Texts in Mathematics}.
\newblock Springer, Dordrecht, second edition, 2009.

\bibitem{MR2497321}
Tuyen~Trung Truong.
\newblock Degree complexity of a family of birational maps. {II}. {E}xceptional
  cases.
\newblock {\em Math. Phys. Anal. Geom.}, 12(2):157--180, 2009.

\bibitem{virili11}
Simone Virili.
\newblock Entropy for endomorphisms of {LCA} groups.
\newblock {\em Topology Appl.}, 159(9):2546--2556, 2012.

\bibitem{arxiv1106.1825}
Junyi Xie.
\newblock Periodic points of birational maps on the projective plane, 2011.
\newblock \url{arXiv:1106.1825}.

\end{thebibliography}

\def\cprime{$'$}

%%%%%%%%%%%%%%%%%%%%%%%%%%%%%%%%%%%%%%%%%%%%%%%%%%%%%%%%%%%%%%%%%%%%%%
%% If not the ArXiv version, do not print the appendix
\ifArXivVersion\else
\end{document}
\fi
%%%%%%%%%%%%%%%%%%%%%%%%%%%%%%%%%%%%%%%%%%%%%%%%%%%%%%%%%%%%%%%%%%%%%%
\appendix 

\section{Additional material}
In this appendix we give further details and comments regarding
various statements in the body of the article. This appendix is for
the ArXiv version of this article; it will not appear in the published
version.
%%%%%%%%%%%%%%%%%%%%%%%%%%%%%%%%%%%%%%%%%%%%%%%%%%%%%%%%%%%%%%%%%%%%%%
\subsection{Description of the divisible hull}
\label{appendix5}\hfill\break
Let~$G$ be an algebraic subgroup of $\GG_m^N$.  We verify that
$G(\Qbar)^\div=G(\Qbar)\GG_m^N(\Qbar)_\tors$.
\par
Let~$Q\in G(\Qbar)$ and $\bfzeta\in\GG_m^N(\Qbar)_\tors$.  Choose
an~$n\ge1$ such that $\bfzeta^n=1$.  Write $G=G_L$ for a lattice
$L\subset\ZZ^N$, i.e., with the obvious notation, the group~$G$ is the
set of points satisfying $P^\bfe$ for every~$\bfe\in L$.  Then
every~$\bfe\in L$ we have
\[
  \bigl((\bfzeta Q)^n\bigr)^\bfe = (\bfzeta^n)^\bfe (Q^\bfe)^n = 1\cdot 1 = 1,
\]
which proves that $\bfzeta Q\in G(\Qbar)^\div$.
\par
For the converse, we suppose that $P\in G_L(\CC)^\div$,
say $P^n\in G_L(\CC)$ for some $n\ge1$. Thus
$P^{n\bfe}=1$ for all $\bfe\in L$, so $P^\bfe\in\bfmu_n$ for all $\bfe\in L$.
In this way we get a homomorphism
\[
  \xi:L \longrightarrow \bfmu, \quad \xi_\bfe = P^\bfe.
\]
We want to prove that there is an element $\bfzeta\in\bfmu^N$ with the
property that $\bfzeta^\bfe=P^\bfe$ for all~$\bfe\in L$, since then
$\bfzeta^{-1}P\in G_L(\CC)$ and $\bfzeta\in\GG_m^N(\CC)_\tors$.
\par
Notationally it's easier if we identify $\bfu$ with $\QQ/\ZZ$
via the map $t\mapsto e^{2\pi it}$. Then~$\xi$ is a homomorphism
\[
  \xi \in \Hom(L,\QQ/\ZZ),
\]
and we want to know if $\xi$ lifts to an element of
$\Hom(\ZZ^N,\QQ/\ZZ)\cong\Hom(\QQ/\ZZ)^N$. In other words, we want to
know if the map
\[
  \Hom(\ZZ^N,\QQ/\ZZ)\longrightarrow \Hom(L,\QQ/\ZZ)
\]
induced by $L\subset\ZZ^N$ is surjective. Letting $K=\ZZ^N/L$, this is
equivalent to showing that $\Ext^1(K,\QQ/\ZZ)=0$. Since~$K$ is a
direct sum of cyclic groups~$C_m$ and copies of~$\ZZ$, it suffices to
prove that
\[  
  \Ext^1(C_m,\QQ/ZZ)=\Ext^1(\ZZ,\QQ/ZZ)=0.
\]
Applying~$\Hom(\,\cdot\,,\QQ/\ZZ)$ to the exact sequence
$0\to\ZZ\xrightarrow{m}\ZZ\to C_m\to0$, we see that
\[
  \Ext^1(C_m,\QQ/ZZ)=\Ext^1(\ZZ,\QQ/ZZ)[m],
\]
so we are reduced to proving that $\Ext^1(\ZZ,\QQ/ZZ)=0$.  But $\ZZ$
is projective, so $\Ext^1(\ZZ,A)=0$ for any abelian group.

%%%%%%%%%%%%%%%%%%%%%%%%%%%%%%%%%%%%%%%%%%%%%%%%%%%%%%%%%%%%%%%%%%%%%%
\subsection{Proof of Lemma~\ref{lemma:perpKVFK}(d)}
\label{appendix2}\hfill\break
We first prove that
\begin{equation}
  \label{eqn:firstformula}
  \Perp_F(V_1 \dotplus \dots \dotplus V_t)
  = \Perp_F(V_1)\cap\cdots\cap\Perp_F(V_t).
\end{equation}
Let $\bfw\in\Perp_F(V_1 \dotplus \dots \dotplus V_t)$.  Then~$\bfw$
certainly annihilates every~$V_i$, so $\bfv\in\Perp(V_i)$ for all $i$,
and hence $\bfv\in\Perp_F(V_1)\cap\cdots\cap\Perp_F(V_t)$. This proves
that
\[
  \Perp_F(V_1 \dotplus \dots \dotplus V_t)
  \subset
  \Perp_F(V_1)\cap\cdots\cap\Perp_F(V_t).
\]
Next let $\bfu\in\Perp_F(V_1)\cap\cdots\cap\Perp_F(V_t)$,
and let $\bfv\in V_1 \dotplus \dots \dotplus V_t$. Then
$\bfv=\bfv_1+\cdots+\bfv_N$ with $\bfv_i\in V_i$. But~$\bfu\in\Perp_F(V_i)$
for all~$i$, so $\bfu\cdot\bfv_i=0$ for all~$i$, so $\bfu\cdot\bfv=0$.
Hence $\bfu\in\Perp_F(V_1 \dotplus \dots \dotplus V_t)$, which proves the
other inclusion
\[
  \Perp_F(V_1)\cap\cdots\cap\Perp_F(V_t)
  \subset
  \Perp_F(V_1 \dotplus \dots \dotplus V_t).
\]
This proves~\eqref{eqn:firstformula}.
\par
We next use~\eqref{eqn:firstformula}, replacing~$V_i$,
with~$\Perp_F(V_i)$, and use~(c) to delete double perps. This gives
\begin{align*}
  V_1\cap\cdots\cap V_t
  &= \Perp_F(\Perp_F(V_1))\cap\cdots\cap\Perp_F(\Perp_F(V_t))
  \quad\text{from (c),} \\
  &= \Perp_F(\Perp_F(V_1)\dotplus\dots\dotplus\Perp_F(V_t))
  \quad\text{from \eqref{eqn:firstformula}.}
\end{align*}
Applying~$\Perp_F$ to this equality and using~(c) again gives
\[
  \Perp_F(V_1\cap\cdots\cap V_t)
  = \Perp_F(V_1)\dotplus\dots\dotplus\Perp_F(V_t),
\]
which is the desired result.

%%%%%%%%%%%%%%%%%%%%%%%%%%%%%%%%%%%%%%%%%%%%%%%%%%%%%%%%%%%%%%%%%%%%%%
\subsection{Baker's theorem}
\label{appendix9}\hfill\break
We prove that
\[
  \Perp_\Qbar(\bfw) \cong \Perp_\QQ(\bfw)\otimes_{\QQ} \Qbar
\]
by induction on the dimension~$k$ of $\Perp_\Qbar(\bfw)$.  The result
is trivial if~$k=0$, and as already noted, the case $k=1$ is the
classical statement of Baker's theorem. Assume now that we know the
result for~$k$, and let $\dim\Perp_\Qbar(\bfw)=k+1$.  Write generators
for the relations in $\Perp_\Qbar(\bfw)$ as the rows of a
$(k+1)$-by-$N$ matrix~$B$, so we have $B\bfw=0$. Permuting the rows
of~$B$ and the coordinates of~$\bfw$, we may assume that $b_{k+1,N}\ne0$,
and then subtracting multiples of the last row of~$B$ from the other
rows, we may assume that $b_{iN}=0$ for $1\le i\le k$. We now let
\[
  \bfw'=(w_1,\ldots,w_{N-1})
  \quad\text{and}\quad
  B' = (b_{ij})_{\substack{1\le i\le k\\ 1\le j\le N-1\\}}
\]
Then the rows of~$B'$ generate~$\Perp_\Qbar(\bfw')$, so by the
induction hypothesis, the space~$\Perp_\Qbar(\bfw')$ has a basis
in~$\Perp_\QQ(\bfw)$. This means that we can
replace~$B'\in\Mat_{k\times N}(\Qbar)$ with a matrix in~$\Mat_{k\times
  N}(\QQ)$, and hence we may assume that the first~$k$ rows of~$B$
have coefficients in~$\QQ$ (and the final entry in each of these rows
is~$0$). 
\par
We now repeat the argument with a different row and column.
The first row of~$B$ must have a non-zero entry (and note
that the last entry is zero), so relabeling the first~$N-1$
coordinates of~$\bfw$, we may assume that~$b_{11}\ne0$. Subtracting
multiples of the first row from the other rows, we may further
assume that $b_{i1}=0$ for all $2\le i\le k+1$. We let
\[
  \bfw''=(w_2,\ldots,w_N)
  \quad\text{and}\quad
  B'' = (b_{ij})_{\substack{2\le i\le k+1\\ 2\le j\le N\\}},
\]
so the rows of~$B''$ generate~$\Perp_{\Qbar}(\bfw'')$. Again by the
induction hypothesis, the space~$\Perp_{\Qbar}(\bfw'')$ has a basis
in~$\Perp_\QQ(\bfw)$, and since the last column of~$B''$ is not zero
(since $b_{k+1,N}\ne0$), there must be some
vector~$(c_2,\ldots,c_N)\in\Perp_\QQ(\bfw'')$ with $c_N\ne0$. This
vector is not in the~$\QQ$-span of the first~$k$ rows of~$B''$,
since the last coordinates of the first~$k$ rows of~$B''$ are all
zero. Hence the vector
\[
  \bfc = (0,c_2,\ldots,c_N)
\]
is in~$\Perp_\QQ(\bfw)$ and is not in the span of the first~$k$ rows
of~$B$. This proves that the first~$k$ rows of~$B$ and the
vector~$\bfc$ generate a $\QQ$-vector subspace of~$\Perp_\QQ(\bfw)$ of
dimension~$k+1$, which is equal to the dimension
of~$\Perp_\Qbar(\bfw)$.
\par
\emph{Amusing remark}: There are many contrived examples of incorrect
induction proofs in which the case $k=0$ is easy, and if $k\ge1$, then
the proof from $k$ to $k+1$ is easy, but one glosses over the fact
that the induction argument is incorrect when one tries to go from
$k=0$ to $k=1$. The above proof has this form, i.e., $k=0$ is easy,
and $k$ implies $k+1$ is easy for $k\ge1$. Of course, the full proof
is correct because $k=1$ is also true, but the $k=1$ case is not
provable by a trivial induction from the $k=0$ case. Indeed, as noted,
the case $k=1$ is the qualitative statement of Baker's linear forms in
logarithms theorem.

%%%%%%%%%%%%%%%%%%%%%%%%%%%%%%%%%%%%%%%%%%%%%%%%%%%%%%%%%%%%%%%%%%%%%%
\subsection{Proof of inequality~\eqref{eqn:maxBlogPvle0}}
\label{appendix3}\hfill\break
Let $B=(\b_{ij})$ and $P=(x_1,\ldots,x_N)$. Then
\begin{align*}
  \sum_{v\in M_K} \max B\log\|P\|_v
  &= \sum_{v\in M_K} \max_{1\le i\le N} 
         \left\{\sum_{j=1}^N \b_{ij}\log\|x_j\|_v\right\} \\
  &\ge \max_{1\le i\le N} \left\{\sum_{v\in M_K} 
      \left( \sum_{j=1}^N \b_{ij}\log\|x_j\|_v\right)\right\} \\
  &\ge \max_{1\le i\le N} \left\{\sum_{j=1}^N \b_{ij}
      \left( \sum_{v\in M_K} \log\|x_j\|_v\right)\right\} \\
  &=0,
\end{align*}
where the first inequality is due to the fact that a sum of maxs may
be strictly larger than the max of the sum, and where the last
equality follows from the product formula. (Note that all of the~$x_j$
are nonzero by assumption, since $P\in\GG_m^N(\Qbar)$.

%%%%%%%%%%%%%%%%%%%%%%%%%%%%%%%%%%%%%%%%%%%%%%%%%%%%%%%%%%%%%%%%%%%%%%
\subsection{Description of $W_1\dotplus\cdots\dotplus W_r\dotplus Z$
as a kernel}
\label{appendix1}\hfill\break
The space $W_1\dotplus\cdots\dotplus W_r\dotplus Z$
is the kernel of the matrix
\[
  \prod_{|\l|<\rho} (A-\l)^N \cdot \prod_{|\l|=\rho} (A-\l)^\ell
  \in\Mat_N(\Qbar).
\]
This follows from the fact if $V\subset\Qbar^N$
is a Jordan subspace for~$A$ with eigenvalue~$\l$, then:
\begin{itemize}
\item
If $|\l|<\rho$, then $(A-\l)^N$ annihilates~$V$.
\item
If $|\l|=\rho$ and $\dim V\le\ell$, then $(A-\l)^\ell$
annihilates~$V$.
\item
If $|\l|=\rho$ and $\dim V=\ell$, so~$V$ is maximal, then
$(A-\l)^\ell$ annihilates~$W$, while~$(A-\l)^\ell$ acts as a nonzero
scalar on~$V/W$; cf.\ \eqref{eqn:WeqkerAlt1V}. 
\end{itemize}

%%%%%%%%%%%%%%%%%%%%%%%%%%%%%%%%%%%%%%%%%%%%%%%%%%%%%%%%%%%%%%%%%%%%%%
\subsection{The smallest group containing a $\f$-orbit 
is $\f$-in\-var\-iant}\hfill\break
\label{appendix6}
Let $P\in\GG_m^N(\CC)$.  The claim is that if $G\subset\GG_m^N$ the
the smallest algebraic subgroup of~$\GG_m^N$ containing the
orbit~$\Orbit_\f(P)$, then~$\f(G)\subset G$. We remark that the proof
works more generally if~$\GG_m^N$ is replaced by any (commutative)
algebraic group~$A$ (over a field of characteristic~$0$). 
\par
So we let~$A$ be such a group, let~$\f:A\to A$ be an algebraic
homomorphism, and let~$\a\in A(\CC)$ be a point.  For each $n\ge1$,
let $H_n\subset A$ be the Zariski closure of the subgroup
of~$A$ generated by~$\f^n(\a)$,
\[
  H_n = \overline{\{ \f^n(\a)^k : k\in\ZZ\}}.
\]
Then~$H_n$ is a Zariski closed subset of~$A$. Further, the fact
that~$H_n$ is the closure of a subgroup implies that~$H_n$ is closed
under the group law, so~$H_n$ is an algebraic subgroup of~$A$.
\par
The group~$G$ contains~$\Orbit_\f(\a)$ by assumption, and its closed
and a group, so it contains all of the~$H_n$. We claim that~$G$ is the
smallest algebraic subgroup containing all of the~$H_n$. To see this,
suppose that~$G'$ is an algebraic subgroup and $G\supset H_n$ for all $n\ge0$.
Since~$\f^n(\a)\in H_n$, this implies in particular that $\f^n(\a)\in G'$,
and since this holds for all~$n\ge0$, we see that $\Orbit_\f(\a)\subset G'$.
But~$G$ is the smallest algebraic subgroup containing~$\Orbit_\f(\a)$,
so~$G\subset G'$. This proves the claim.
\par
We now consider a chain of containments:
\begin{align*}
  G
  \supset G \setminus H_0 
  &\supset \bigcup_{n\ge1} H_n
    &&\text{since $G$ contains $\displaystyle\bigcup_{n\ge0} H_n$,} \\
  &= \bigcup_{n\ge0} \f(H_n)
    &&\text{since $\f(H_n)=H_{n+1}$,} \\
  &= \f\left(\bigcup_{n\ge0} H_n\right).
\end{align*}
Hence
\[
  \f^{-1}(G) \supset \bigcup_{n\ge0} H_n.
\]
But~$\f^{-1}(G)$ is an algebraic subgroup of~$A$, while we showed
earlier that~$G$ is the smallest algebraic subgroup of~$A$ that
contains $\bigcup_{n\ge0} H_n$. Hence $G\subset\f^{-1}(G)$, which gives
the desired inclusion $\f(G)\subset G$.
\par
We remark that it is possible for~$\f(G)$ to be strictly contained
in~$G$.  For example, this happens if~$G$ has more than one connected
component and~$\f(G)$ is contained in the identity component of~$G$.

%%%%%%%%%%%%%%%%%%%%%%%%%%%%%%%%%%%%%%%%%%%%%%%%%%%%%%%%%%%%%%%%%%%%%%
\subsection{Arithmetic degree independent of ambient space}
\label{appendix7}\hfill\break
We claimed that if~$i:\PP^k\to\PP^N$ with $\dim i(\PP^k)=k$, then
$h_{\PP^N}\circ i\asymp h_{\PP^k}$. Assuming this, consider any map
$\f:\PP^N\dashrightarrow\PP^N$ that descends to a
map~$\psi:\PP^k\dashrightarrow\PP^k$, i.e., so that
$i\circ\psi=\f\circ i$.  Then for any~$P\in\PP^k(\Qbar)$ we have
\[
  h_{\PP^N}\bigl(\f^n(i(P))\bigr)
  =   h_{\PP^N}\bigl(i(\psi^n(P))\bigr)
  \asymp h_{\PP^k}\bigl(\psi^n(P)\bigr).
\]
Taking~$n^{\text{th}}$-roots and the limsup as $n\to\infty$, we see that
$\a_\f\bigl(i(P)\bigr)=\a_\psi(P)$, i.e., we get the same value for
the arithmetic degree regardless of where we do the computation.
\par
The claim is easy, since~$i$ satisfies
$i^*\Ocal_{\PP^N}(1)=\Ocal_{\PP^k}(m)$ for some~$m\ge1$. (These are
line bundles, not orbits.)  It follows from standard properties of
heights that $h_{\PP^N}\circ i = m h_{\PP^k} + O(1)$, which is
stronger than what we claimed.

%%%%%%%%%%%%%%%%%%%%%%%%%%%%%%%%%%%%%%%%%%%%%%%%%%%%%%%%%%%%%%%%%%%%%%
\subsection{A Jordan block condition that implies $\a_{\f}(P)<\d_{\f}$}
\label{appendix8}\hfill\break
We proved that if~$A$ is diagonalizable, then $\a_{\f}(P)<\d_{\f}$.  A
variation of the same argument can be used to prove the implication
\[
  \hhat_{\f}(P)=0\Longrightarrow\a_{\f}(P)<\d_{\f}
\]
under the weaker hypothesis
\begin{equation}
  \label{eqn:Jhyp}
  \tag{$*$}
  \left(
  \begin{tabular}{@{}l@{}}
       Every Jordan subspace for~$A$ whose eigenvalue~$\l$ \\
       satisfies $|\l|=\rho(A)$ is a maximal Jordan subspace\\
  \end{tabular}
  \right).
\end{equation}
Note that since the Jordan blocks of a diagonalizable matrix have
dimension~$1$, such matrices clearly satisfy~\eqref{eqn:Jhyp}.

%%%%%%%%%%%%%%%%%%%%%%%%%%%%%%%%%%%%%%%%%%%%%%%%%%%%%%%%%%%%%%%%%%%%%%
\subsection{Verification of the implication in Remark \ref{remark:hhatasymph}}
\label{appendix4}\hfill\break
If $\hhat\asymp h$ and~$\hhat_\f(P)=0$, then
\begin{align*}
  h\bigl(\f^n(P)\bigr)
  &\asymp \hhat\bigl(\f^n(P)\bigr)\\
  &= \hhat_\f\bigl(\f^n(P)\bigr)+\hhat_{\f^{-1}}\bigl(\f^n(P)\bigr)\\
  &=\d_\f^n\hhat_\f(P) + \d_{\f^{-1}}^{-n}\hhat_{\f^{-1}}(P)\\
  &= \d_{\f^{-1}}^{-n}\hhat_{\f^{-1}}(P).
\end{align*}
Thus~$\bigl\{\f^n(P)\bigr\}$ is a set of bounded height, hence
finite, so~$P$ is periodic.

\end{document}